\newtheorem{thm}{Theorem}[section]
\newtheorem{lm}[thm]{Lemma}
\newtheorem{prop}[thm]{Proposition}
\newtheorem{con}[thm]{Conjecture}
\theoremstyle{definition}
\newtheorem{df}[thm]{Definition}
\newtheorem{exa}[thm]{Example}
\numberwithin{equation}{section}
\def\NN{{\mathbb{N}}}
\def\ZZ{{\mathbb{Z}}}
\def\AA{{\mathbb{A}}}
\def\CA{{\cal A}}
\def\CC{{\cal C}}
\def\CB{{\cal B}}
\def\CD{{\cal D}}
\def\CF{{\cal F}}
\def\CK{{\cal K}}
\def\CL{{\cal L}}
\def\CR{{\cal R}}
\def\CS{{\cal S}}
\def\CX{{\cal X}}
 \def\wt{\widetilde} \def\und#1{\underline{#1}}
\def\epv {{ $\mbox{}$\hfill \qedsymbol }}
\def\wh#1{\widehat{#1}}
\def\Im{\textnormal{Im}}
\def\snull{\textnormal{ }}
\def\wt{\widetilde}
\def\ra{\rightarrow}
\def\lMOD{\mbox{{\rm -MOD}}}
\def\mod{\mbox{{\rm mod}}}
\def\ind{\mbox{{\rm ind}}}
\newcommand{\Rep}{\mathrm{Rep}}
\def\Mod{\mbox{{\rm Mod}}}
\def\Ext{\mbox{{\rm Ext}}}
\def\Coker{\mbox{{\rm Coker}}}  
\def\supp{\mbox{\rm supp}}
\def\MOD{{\rm MOD}} \def\Ind{{\rm Ind}} \def\ind{{\rm ind}}
\def\rad{\textnormal{rad}}
\def\Add{\textnormal{Add}}
\DeclareMathOperator{\ob}{ob}
\let\mod=\undefined
\DeclareMathOperator{\KG}{KG}
\DeclareMathOperator{\Ker}{Ker}
\DeclareMathOperator{\op}{op}
\DeclareMathOperator{\add}{add}
\DeclareMathOperator{\End}{End}
\DeclareMathOperator{\Hom}{Hom}
\DeclareMathOperator{\mod}{mod}
\begin{document}

\baselineskip=17pt


\title{On covering theory and its applications}

\author{Grzegorz Pastuszak${}^{*}$}

\date{}

\maketitle

\renewcommand{\thefootnote}{}
\footnote{${}^{*}$Faculty of Mathematics and Computer Science, Nicolaus Copernicus University, Chopina 12/18, 87-100 Toru\'n, Poland, e-mail: past@mat.umk.pl.}
\footnote{MSC 2020: Primary 16G20; Secondary 16B50, 18E10.}
\footnote{Key words and phrases: Galois coverings, functor categories, Krull-Gabriel dimension, self-injective algebras, cluster-tilted algebras, strongly simply connected algebras.}

\renewcommand{\thefootnote}{\arabic{footnote}}
\setcounter{footnote}{0}

\begin{abstract} The aim of this survey is to present applications of covering techniques in the theory of Krull-Gabriel dimension. We start with recalling fundamental facts of the classical covering theory of quivers and locally bounded categories. Then we present some recent results on covering theory of functor categories. These are interesting themselves, but also allow to relate Krull-Gabriel dimensions of locally bounded categories $R$ and $A$ when $R\ra A$ is a Galois covering functor. Finally, we concentrate on applications of our methods in describing Krull-Gabriel dimensions of various classes of algebras and locally bounded categories.
\end{abstract}

\tableofcontents

\section{Introduction} 

This is a survey article devoted to present fundamental concepts of covering theory and its applications in the representation theory of algebras. 

Throughout the paper, $K$ is a fixed algebraically closed field. We denote by $\MOD(K)$ and $\mod(K)$ the categories of all $K$-vector spaces and all finite dimensional $K$-vector spaces. An \textit{algebra} $A$ is always a finite dimensional associative basic $K$-algebra with a unit. By an \textit{$A$-module} we mean a right $A$-module. We denote by $\MOD(A)$ and $\mod(A)$ the categories of all $A$-modules and all finitely generated $A$-modules, respectively.

Covering theory emerged in the field of complex analysis in the study of Riemann surfaces. Nowadays it is a branch of topology, and particularly algebraic topology. In topology, coverings are special types of \emph{local homeomorphisms} which locally behave like projections. A standard example is the covering $p:\mathbb{R}\ra S^1$ of the unit circle $S^1$ by the space $\mathbb{R}$ of real numbers, defined by a well-known formula $p(x)=(\cos(2\pi x),\sin(2\pi x))$. It was first observed by C. Riedtmann in \cite{Rie} that these general ideas can be applied in Auslander-Reiten theory in the form of some special coverings of Auslander-Reiten quivers. Riedtmann's goal was what was most natural for the classical representation theory, namely, to describe the indecomposable modules over finite dimensional algebras. Her methods were soon organized into a full-fledged mathematical theory by P. Gabriel in \cite{Ga}, K. Bongartz and P. Gabriel in \cite{BoGa}, and R. Martinez-Villa and J. A. de la Pe\~na in \cite{MP}. Soon after, P. Dowbor and A. Skowro\'nski have introduced in \cite{DoSk,DoSk2} (see also \cite{DoLeSk}) further useful techniques in covering theory. Since then, covering theory of locally finite quivers and locally bounded $K$-categories (which are in a sense $K$-linear categorifications of quivers) has been successfully applied in the representation theory of finite dimensional algebras over a field. A class of coverings which is particularly useful in applications is the class of \emph{Galois coverings}. This paper concentrates on this special class.

Covering theory, as it appears in the representation theory of finite dimensional algebras, may be studied on several levels. The first one is that of quivers and locally bounded $K$-categories, see Section 2.2 and Definition \ref{dg} in particular. The second level is that of modules, since a covering functor between locally bounded $K$-categories naturally induces functors between module categories, see Section 2.3. The final level, at least from the classical perspective, is the level of functor categories.

Recall that functor categories have been widely used in the representation theory since the pioneer work by M. Auslander \cite{Au0}. In this influential paper the author shows that if $A$ is an algebra, then a contravariant $K$-linear functor $S:\mod(A)\ra\mod(K)$ is finitely presented (see Section 3.2) and simple if and only if there exists a right minimal almost split homomorphism $g:M\ra N$ in $\mod(A)$ \cite[IV]{AsSiSk} such that the sequence of functors $${}_{A}(-,M)\xrightarrow{{}_{A}(-,g)}{}_{A}(-,N)\ra S\ra 0$$ is a minimal projective presentation of $S$, see for example \cite[IV.6]{AsSiSk} for more details. Then in \cite[Corollary 3.14]{Au} Auslander proved that representation-finite algebras are exactly those algebras for which all finitely presented functors are of finite length. These two results are the starting point of \textit{functorial approach} to the representation theory of algebras which is now commonly used in the field. A natural continuation of the line of research started by Auslander is the study of the \textit{Krull-Gabriel filtration} of the category of finitely presented functors and then the \textit{Krull-Gabriel dimension}. We refer the reader to Section 5.1 for more details about these concepts and our motivation for exploring them.

The three levels we mention above have a common feature, based on the initial idea of \emph{local isomorphism} of the covering structure and the one which is covered. This is rather easy to observe in Definition \ref{dg} (see also the beginning of Section 2.2) and Theorems \ref{0t1}, \ref{t4}. Another common feature appears on the last two levels where we always have the \emph{pull-up functor}, see Sections 2.3 and 3.2. On the module level this functor is known to have the left and the right adjoints, called the \emph{push-down functors}. These functors are classically described in terms of direct sums and direct products of modules. However, on the level of functor categories, analogical descriptions of adjoints to the pull-up functor are in general no longer valid. Nevertheless, the pull-up still has both the left and the right adjoint functors which are constructed using the remarkable \emph{tensor product bifunctor} \cite{FPN,Mi} for categories of modules over small $K$-categories, see \cite[Theorem 3.6, Corollary 3.4]{P6'} and Theorem \ref{c1}. Since covering functors on the module level have similar descriptions as well \cite[Remark 3.3]{P6'}, we see the covering theory as the theory of the left and the right adjoint functors to the appropriate pull-up functor where the left adjoint is defined via the tensor product. This point of view is both unifying and fruitful.  

In this survey we aim to show fundamental concepts of the classical covering theory of locally bounded $K$-categories, covering theory of functor categories, and finally their applications in the theory of Krull-Gabriel dimension. The second section of the paper is devoted to covering theories of locally bounded $K$-categories and module categories. Sections 3 and 4 concern functor categories. The final Section 5 deals with applications to Krull-Gabriel dimension. 

The paper is largely based on our own results from \cite{P4,P6,J-PP1,P6'}. The latest covering techniques for functor categories which are described in Sections 3 and 4 come from \cite{P6'}. Although the paper is intended to be a review, we decided to include sketches of proofs of many theorems and facts. In this matter we focus on ideas, not technical details. We believe this will be useful to the reader.

\section{Covering theory of module categories}

This section recalls fundamentals of Galois covering theory of quivers, locally bounded $K$-categories and the associated functors between module categories. The notation introduced here is freely used in the paper. We mostly base on Section 2 of \cite{P6'} and Sections 2, 3 and 4 of \cite{P6}.

\subsection{Modules over locally bounded $K$-categories}

Assume that $R$ is a $K$-category and let $\ob(R)$ be the class of all objects of $R$. If $x,y\in\ob(R)$, then $R(x,y)$ denotes the space of all morphisms from $x$ to $y$. Following \cite{Ga,BoGa}, we say that a $K$-category $R$ is \textit{locally bounded} if and only if 

\begin{itemize}
	\item distinct objects of $R$ are not isomorphic,
	\item the algebra $R(x,x)$ is local, for any $x\in\ob(R)$,
	\item $\sum_{y\in\ob(R)}\dim_{K}R(x,y)<\infty$, $\sum_{y\in\ob(R)}\dim_{K}R(y,x)<\infty$, for any $x\in\ob(R)$.
\end{itemize}

The main examples of locally bounded $K$-categories are given by the \textit{bound quiver $K$-categories}. We recall necessary definitions below.

Assume that $Q=(Q_{0},Q_{1})$ is a \emph{quiver} where $Q_{0}$ is the set of vertices and $Q_{1}$ the set of arrows. Then $Q$ is \textit{finite} if both sets $Q_{0}$ and $Q_{1}$ are finite. The quiver $Q$ is \textit{locally finite} if the number of arrows in $Q_{1}$ starting or ending in any vertex is finite. If $\alpha\in Q_{1}$ is an arrow, then $s(\alpha)$ denotes its starting vertex and $t(\alpha)$ its terminal vertex. Assume that $x,y\in Q_{0}$. By a \textit{path} from vertex $x$ to vertex $y$ in $Q$ we mean a sequence $c_{1}\dots c_{n}$ in $Q_{1}$ such that $s(c_{1})=x$, $t(c_{n})=y$ and $t(c_{i})=s(c_{i+1})$, for $1\leq i<n$. This means that we compose arrows in the opposite direction to functions, consistently with \cite{AsSiSk}. We associate the \textit{stationary path} $e_{x}$ to each vertex $x\in Q_{0}$ and we set $s(e_{x})=t(e_{x})=x$.

The \textit{path $K$-category} $\und{KQ}$ of a locally finite quiver $Q$ is a $K$-category whose objects are the vertices of $Q$ and the $K$-linear space $\und{KQ}(x,y)$ of morphisms from $x$ to $y$ is generated by all paths from $y$ to $x$. The composition in $\und{KQ}$ is defined by concatenation of paths in $Q$, defined in a natural way. If $I$ is an \emph{admissible ideal} in $\und{KQ}$ \cite{Pog,BoGa}, then the pair $(Q,I)$ is called the \textit{bound quiver}. The associated quotient $K$-category $\und{KQ}\slash I$ is locally bounded and called the \textit{bound quiver $K$-category}. It is shown in \cite{BoGa} that any locally bounded $K$-category over an algebraically closed field $K$ is isomorphic with some bound quiver $K$-category.


Assume that $R$ is a locally bounded $K$-category. A \textit{right $R$-module} $M$ (or simply an \textit{$R$-module}) is a $K$-linear covariant functor $M:R^{\op}\ra\MOD(K)$, equivalently, a $K$-linear contravariant functor $M:R\ra\MOD(K)$. An $R$-module $M$ is \textit{finite dimensional} if and only if $\sum_{x\in\ob(R)}\dim_{K} M(x)<\infty$ and \emph{locally finite dimensional} if and only if $\dim_{K} M(x)<\infty$, for any $x\in\ob(R)$. Assume that $M,N:R^{\op}\ra\MOD(K)$ are $R$-modules. An \textit{$R$-module homomorphism} $f:M\ra N$ is a natural transformation of functors $(f_{x})_{x\in\ob(R)}$ where $f_{x}:M(x)\ra N(x)$ is a vector space homomorphism, for any $x\in\ob(R)$. The space of all homomorphisms from $M$ to $N$ is denoted by $\Hom_{R}(M,N)$. We usually write ${}_{R}(M,N)$ instead of $\Hom_{R}(M,N)$. Analogous notation for hom-spaces is used for any additive categories.

We denote by $\MOD(R)$, $\Mod(R)$ and $\mod(R)$ the categories of all $R$-modules, all locally finite dimensional $R$-modules and all finite dimensional $R$-modules, respectively. The full subcategories of $\Mod(R)$ and $\mod(R)$, formed by representatives of isomorphism classes of all indecomposable $R$-modules, are denoted by $\Ind(R)$ and $\ind(R)$, respectively. As usual, $\Gamma_{R}$ is the \emph{Auslander-Reiten quiver} of the category $\ind(R)$. Recall that the functor $$D=\Hom_{K}(-,K):\mod(R)\ra \mod(R^{\op})$$ is a duality between the category of right and left finite dimensional $R$-modules. 

Assume that $R=\und{KQ}\slash I$ is a bound quiver $K$-category. It is well known that the category $\MOD(R)$ of $R$-modules is equivalent with the category $\Rep_{K}(Q,I)$ of \emph{$K$-linear representations} of the bound quiver $(Q,I)$. Moreover, if $(Q,I)$ is finite, then there is an equivalence of $\Rep_{K}(Q,I)$ and the category of all modules over the \textit{bound quiver $K$-algebra} $KQ\slash I$, restricting to equivalence of categories of finite dimensional modules, see \cite[III 1.6]{AsSiSk} for details. More generally, any module $M\in\MOD(R)$ can be viewed as a direct sum $\widehat{M}=\bigoplus_{x\in\ob(R)}M(x)$ with a right action of $\widehat{R}=\bigoplus_{x,y\in\ob(R)}R(x,y)$ such that, for any $m\in\widehat{M}$ and $r\in\widehat{R}$, the element $m\cdot r$ is defined as in \cite[III 1.6]{AsSiSk}. In this language, $f:M\ra N$ is a homomorphism of $R$-modules, in the sense of a natural transformation of functors, if and only if $\widehat{f}:\widehat{M}\ra\widehat{N}$ is a homomorphism of $R$-modules in the usual sense, that is $\widehat{f}(m\cdot r)=\widehat{f}(m)\cdot r$. It is convenient in representation theory to use all three equivalent descriptions of module categories over locally bounded $K$-categories.    

Assume that $R$ is a locally bounded $K$-category and let $x\in\ob(R)$. Then $P_{x}=R(-,x)$ and $I_{x}=D(R(-,x))$ denote the indecomposable projective and the indecomposable injective (right) $R$-module, associated with the vertex $x$, respectively. Observe that these modules are finite dimensional, because $R$ is locally bounded. Moreover, they are indecomposable, essentially due to Yoneda Lemma and the definition of $R$, and all indecomposable projectives and injectives are of the form $P_x$ and $I_x$, respectively.

\subsection{Galois coverings}

Here we recall the notion of a Galois $G$-covering functor \cite{BoGa,MP}. For a general definition
of a covering functor we refer the reader to \cite{Ga,BoGa}. We start with Galois $G$-coverings of quivers and then we move on to locally bounded $K$-categories.

Assume that $Q'=(Q'_0,Q'_1)$ and $Q=(Q_0,Q_1)$ are quivers. Then a \emph{quiver morphism} $p:Q'\ra Q$ is a pair $(p_0,p_1)$ of functions $p_0:Q'_0\ra Q_0$ and $p_1:Q'_1\ra Q_1$ such that $s(p_{1}(\alpha))=p_0(s(\alpha))$ and $t(p_{1}(\alpha))=p_0(t(\alpha))$, for any arrow $\alpha\in Q'_1$. A quiver morphism $p:Q'\ra Q$ is an \emph{isomorphism} if and only if both functions $p_0$ and $p_1$ are bijections. An isomorphism $p:Q\ra Q$ is an \emph{automorphism}.

Assume that $p:Q'\ra Q$ is a morphism of quivers and $G$ is a group of automorphisms of the quiver $Q'$ acting freely on vertices of $Q'$. This means that $gx=x$ if and only if $g=1$, for any $g\in G$ and $x\in Q'_0$. Then $p:Q'\ra Q$ is a \emph{Galois $G$-covering of quivers} (or simply a \emph{Galois covering}) if and only if: 
\begin{itemize}
	\item the function $p_1:Q'_1\ra Q_1$ induces bijections $$\{\alpha:gx\ra y\mid \alpha\in Q'_1,g\in G\}\longleftrightarrow\{\beta:p_0(x)\ra p_0(y)\mid \beta\in Q_1\}\longleftrightarrow$$$$\longleftrightarrow\{\alpha:x\ra gy\mid \alpha\in Q'_1,g\in G\},$$ for any $x,y\in Q'_0$,
	\item the function $p_0:Q'_0\ra Q_0$ is surjective,
	\item $pg=p$, for any $g\in G$,
	\item for any vertices $x,y\in Q'_0$ such that $p(x)=p(y)$ there is $g\in G$ such that $gx=y$. 
\end{itemize} A quiver morphism $p:Q'\ra Q$ satisfies the above conditions if and only if it induces a quiver isomorphism $Q\cong Q'\slash G$ where $Q'\slash G$ is the \emph{orbit quiver}, see \cite{MP} for details.

In this paper we do not consider covering functors in general. Let us just mention that a quiver morphism $p:Q'\ra Q$ is a \emph{covering} if and only if $p_1$ induces a bijection between arrows starting (or ending) in a vertex $x$ of $Q'$ and starting (or ending) in a vertex $p(x)$ of $Q$, for any $x\in Q'_0$. Therefore we might say that quivers $Q'$ and $Q$ are \emph{locally the same} in this case. Recall that this is a common feature for coverings of topological spaces.

In covering theory a key role is played by \emph{universal coverings} \cite{MP} which are some special Galois coverings. We recall the necessary definitions below. First we remind the definition of the \emph{first fundamental group}.

Let $Q=(Q_0,Q_1,s,t)$ be a quiver and $\alpha\in Q_{1}$. We denote by $\alpha^{-1}$ the \textit{formal inverse} of $\alpha$ and we set $s(\alpha^{-1})=t(\alpha)$, $t(\alpha^{-1})=s(\alpha)$ and $(\alpha^{-1})^{-1}=\alpha$. If the composition of arrows $\alpha\beta$ exists, then $(\alpha\beta)^{-1}=\beta^{-1}\alpha^{-1}$. The set of all formal inverses of arrows from $Q_{1}$ is denoted by $Q_{1}^{-1}$. The elements of $Q_{1}$ are called \textit{direct arrows} and those of $Q_{1}^{-1}$ are \textit{inverse arrows}. A \textit{walk} from $x$ to $y$ in $Q$ is a path from $x$ to $y$ in $Q\cup Q_{1}^{-1}$. We denote by $W_{Q}$ the set of all paths in $Q$ and by $W_{Q}(x,y)$ the set of all paths in $Q$ from $x$ to $y$. 

A \emph{homotopy relation} in $Q$ is the smallest equivalence relation $\sim$ on $W_{Q}$ such that:
\begin{itemize}
  \item $\alpha\alpha^{-1}\sim e_{s(\alpha)}$ and $\alpha^{-1}\alpha\sim t_{s(\alpha)}$, for any arrow $\alpha\in Q_1$,
  \item if $u,w,u',w'\in W_Q$ are such that $u\sim u'$, $w\sim w'$ and $t(u)=s(w)$, $t(u')=s(w')$, then we have $uw\sim u'w'$.
\end{itemize} The \emph{first fundamental group} $\Pi_{1}(Q,a)$ of the quiver $Q$ with the base vertex $a\in Q_0$ is defined as the quotient set $W_{Q}(a,a)\slash\sim$ endowed with the multiplication $[u]_{\sim}\cdot[w]_{\sim}=[uw]_{\sim}$, for any $u,w\in W_{Q}(a,a)$. It is easy to see that the multiplication is correct and defines a group structure on $W_{Q}(a,a)\slash\sim$ with $[e_{a}]_\sim$ as the neutral element. Observe that if $w$ is a path from $a$ to $b$ in $Q$, then the function $\Pi_{1}(Q,a)\ra\Pi_{1}(Q,b)$ such that $[u]_{\sim}\mapsto[w^{-1}uw]_\sim$ is a group isomorphism. Hence the first fundamental group is independent of choice of the base vertex. It is also convenient to note that if $T$ is a maximal subquiver of $Q$ which is a tree, then $\Pi_{1}(Q,a)$ is a free group freely generated by the elements $[u_{\alpha}\alpha v^{-1}_\alpha]_\sim$ where $\alpha\in Q_{1}\setminus T_{1}$ and $u_{\alpha}$ ($v_{\alpha}$, respectively) is a path in $T$ connecting $a$ with $s(\alpha)$ ($a$ with $t(\alpha)$, respectively).

Assume that $Q=(Q_0,Q_1,s,t)$ is a connected quiver and $a\in Q_0$. We define the quiver $\wt{Q}=(\wt{Q_0},\wt{Q_1},\wt{s},\wt{t})$ in the following way:
\begin{itemize}
  \item the set $\wt{Q_0}$ of vertices in $\wt{Q}$ is the set $W_{Q}(a,-)\slash\sim$ of homotopy classes of all walks starting in $a$,
  \item the set $\wt{Q_1}$ of arrows in $\wt{Q}$ is the set of elements of the form $\alpha_{[u]_{\sim}}$ where $u\in W_{Q}(a,s(\alpha))$ such that $\wt{s}(\alpha_{[u]_{\sim}})=[u]_\sim$ and $\wt{t}(\alpha_{[u]_{\sim}})=[u\alpha]_\sim$.
\end{itemize} The function $p:\wt{Q}\ra Q$ such that $p([u]_{\sim})=t(u)$ and $p(\alpha_{[u]_{\sim}})=\alpha$ is called the \emph{universal covering} of a quiver $Q$. Observe that the fundamental group $\Pi_{1}(Q,a)$ acts on $\wt{Q}$ in the following natural way: if $[u]_{\sim}\in\Pi_{1}(Q,a)$, then $$[u]_{\sim}\cdot[w]_\sim=[uw]_\sim\textnormal{ and }[u]_{\sim}\cdot\alpha_{[w]_\sim}=\alpha_{[uw]_\sim}.$$ It can be shown that $p:\wt{Q}\ra Q$ is a Galois covering with $\Pi_{1}(Q,a)$ as the covering group. Moreover, if $q:Q'\ra Q$ is a Galois $G$-covering, then there is a normal subgroup $H$ of  $\Pi_{1}(Q,a)$ such that $G\cong\Pi_{1}(Q,a)\slash H$, $Q'\cong\wt{Q}\slash H$ and the following diagram commutes $$\xymatrix{\wt{Q}\ar[rr]^{p}\ar[dd]_{\pi}&&Q\\\\Q'\cong\wt{Q}\slash H\ar[uurr]_{q}}$$ where $\pi:\wt{Q}\ra\wt{Q}\slash H$ is a Galois covering with the covering group $H$. This property justifies the terminology for the universal covering $p:\wt{Q}\ra Q$.

In the following example we show the construction of the universal Galois covering $p:\wt{Q}\ra Q$ of the Kronecker quiver $Q$ from scratch, that is, based on the general definition.

\begin{exa}\label{e33} Assume that $$\xymatrix{\\ Q:=}\xymatrix{a \ar@/^0.4pc/[dd]^{\beta} \ar@/_0.4pc/[dd]_{\alpha}\\\\b}$$ is the Kronecker quiver. Observe that $\Pi_{1}(Q,a)$ is freely generated by the walk $\alpha\beta^{-1}$ and thus $\Pi_{1}(Q,a)\cong\ZZ$. The quiver morphism $p:\wt{Q}\ra Q$ such that the quiver $\wt{Q}$ is an infinite quiver of the form
$$\xymatrix{&\ar[dl]_{\beta_{-1}}[\beta\alpha^{-1}]_\sim\ar[dr]^{\alpha_{-1}}&&\ar[dl]_{\beta_0}[e_a]_\sim\ar[dr]^{\alpha_0}&&\ar[dl]_{\beta_1}[\alpha\beta^{-1}]_\sim\ar[dr]^{\alpha_1}\\\hdots [\beta\alpha^{-1}\beta]_\sim && [\beta]_\sim && [\alpha]_\sim && [\alpha\beta^{-1}\alpha]_\sim\hdots}$$ where:

\begin{itemize}
 \item $\alpha_{-1}=\alpha_{[\beta\alpha^{-1}]_\sim}$ and $\beta_{-1}=\beta_{[\beta\alpha^{-1}]_\sim}$,
  \item $\alpha_{0}=\alpha_{[e_a]_\sim}$ and $\beta_{0}=\beta_{[e_a]_\sim}$, 
  \item $\alpha_{1}=\alpha_{[\alpha\beta^{-1}]_\sim}$ and $\beta_{1}=\beta_{[\alpha\beta^{-1}]_\sim}$, and so on,
  \item $p([(\alpha\beta^{-1})^{i}]_\sim)=t((\alpha\beta^{-1})^{i})=a$, for $i\in\ZZ$,
  \item $p([(\alpha\beta^{-1})^{i}\beta]_\sim)=t((\alpha\beta^{-1})^{i}\beta)=b$, for $i\in\ZZ$,
  \item $p(\alpha_i)=\alpha$, $p(\beta_i)=\beta$, for $i\in\ZZ$
\end{itemize} is the universal covering of the quiver $Q$, that is, the Galois $G$-covering with the covering group $G=\Pi_{1}(Q,a)\cong\ZZ$. It is clear that such description is rather inconvenient. We usually simplify the notation on $\wt{Q}$ as follows: $$\xymatrix{&\ar[dl]_{\beta} a\ar[dr]^{\alpha}&&\ar[dl]_{\beta}a\ar[dr]^{\alpha}&&\ar[dl]_{\beta}a\ar[dr]^{\alpha}\\\hdots b && b && b && b\hdots}$$ This is supposed to indicate all fibers of $p$, equivalently, all $\ZZ$-orbits of the action. \epv
\end{exa} 

The above notions and constructions can be generalized to the case of bound quivers. In particular, one defines the first fundamental group $\Pi_{1}(Q,I)$ of a bound quiver $(Q,I)$. If $p:(\wt{Q},\wt{I})\ra(Q,I)$ is the universal covering of a bound quiver $(Q,I)$, then $\wt{Q}$ is constructed as before and $\wt{I}$ is obtained from $I$ by \emph{lifting}, in a suitable way, the relations generating $I$. Then $p:(\wt{Q},\wt{I})\ra(Q,I)$ is also a Galois covering with the covering group $\Pi_{1}(\wt{Q},\wt{I})$, and any Galois covering of $(Q,I)$ appears in a similar way as for quivers. We refer to \cite{MP} for details about these constructions. The crucial thing is that Galois coverings of bound quivers naturally induce Galois coverings on the level of the associated locally bounded $K$-categories. This is the source of the following definition. 

\begin{df}\label{dg} Assume that $R,A$ are locally bounded $K$-categories, $F:R\ra A$ is a $K$-linear functor and $G$ a group of $K$-linear automorphisms of $R$ acting freely on the objects of $R$ (i.e. $gx=x$ if and only if $g=1$, for any $g\in G$ and $x\in\ob(R)$). Then $F:R\ra A$ is a \textit{Galois G-covering} (or simply a \emph{Galois covering}) if and only if 
\begin{itemize}
	\item the functor $F:R\ra A$ induces isomorphisms $$\bigoplus_{g\in G}R(gx,y)\cong A(F(x),F(y))\cong\bigoplus_{g\in G}R(x,gy)$$ of vector spaces, for any $x,y\in\ob(R)$,
	\item the functor $F:R\ra A$ induces a surjective function $\ob(R)\ra\ob(A)$,
	\item $Fg=F$, for any $g\in G$,
	\item for any $x,y\in\ob(R)$ such that $F(x)=F(y)$ there is $g\in G$ such that $gx=y$. 
\end{itemize}
\end{df} As for quivers, the functor $F:R\ra A$ satisfies the above conditions if and only if $F$ induces an isomorphism $A\cong R\slash G$ where $R\slash G$ is the \textit{orbit category} \cite{BoGa}. In this case we recall its definition. Namely, the objects of $R\slash G$ are $G$-orbits $Gx=\{gx\mid g\in G\}$, $x\in\ob(R)$. The set of morphisms $(R\slash G)(Gx,Gy)$ is defined as the set of fix points of the following action of $G$ on $\prod_{z\in Gx}\prod_{t\in Gy}R(z,t)$: $$g(_{t}f_{z}:z\ra t)_{z\in Gx,t\in Gy}=(g(_{g^{-1}t}f_{g^{-1}z}):z\ra t)_{z\in Gx,t\in Gy}.$$ In other words, $(_{t}f_{z}:z\ra t)_{z\in Gx,t\in Gy}$ is a morphism if and only if $g(_{t}f_{z})={}_{gt}f_{gz}$.

As mentioned above, if $p:(Q',I')\ra(Q,I)$ is a Galois $G$-covering of bound quivers, then the induced $K$-linear functor $F_{p}:\und{KQ'}\slash I'\ra \und{KQ}\slash I$ is a Galois $G$-covering in the above sense. Moreover, since the field $K$ is algebraically closed, any Galois covering in the sense of Definition \ref{dg} appears in this way. Nevertheless, in covering theory results are usually formulated in terms of locally bounded $K$-categories. In this paper we follow this tradition. 

The next example describes Galois coverings of \emph{trivial extensions} which are ubiquitous in representation theory, see for example \cite{ErKeSk} and \cite{Sk4}. Here we recall a general construction having two important special cases, namely, where the covering category is \emph{repetitive} or \emph{cluster repetitive}. We use these constructions in Section 5.

\begin{exa}\label{e44} Assume that $C$ is an algebra and $E$ is a non-zero $C$-$C$-bimodule. Consider a locally finite dimensional matrix $K$-algebra $C_{E}$ of the form
$$C_{E}=\left[\begin{array}{ccccc}\ddots&&&&0\\\ddots&C_{-1}&&&\\  &E_{0}&C_{0}&&\\&&E_{1}&C_{1}&\\
 0&& &\ddots&\ddots\end{array}\right]$$ where $C_{i}=C$ and $E_{i}=E$, for any $i\in\ZZ$. The multiplication is naturally induced from that of $C$ and the $C$-$C$-bimodule structure of $E$. The identity maps $C_{i}\ra C_{i-1}$ and $E_{i}\ra E_{i-1}$ induce an automorphism $\nu=\nu_{C_{E}}$ such that the orbit algebra $C_{E}\slash\langle\nu\rangle$ is isomorphic to the \textit{trivial extension} $C\ltimes E$ of $C$ by $E$. The algebra $C_{E}$ may be viewed as a locally bounded $K$-category as follows. Assume that $\{e_{1},\dots,e_{n}\}$ is a complete set of primitive
orthogonal idempotents of $C$. Then the objects of $C_{E}$ are of the form $e_{m,i}$, for $m\in\{1,\dots,n\}$, $i\in\ZZ$, and the morphism spaces are
defined as: $$C_{E}(e_{m,j},e_{l,i})=\left\{\begin{array}{cl}e_{l}Ce_{m}, & i=j,\\e_{l}Ee_{m}, & i=j+1,\\
0,& \textnormal{otherwise.}\end{array} \right.$$ Then the projection functor $C_{E}\ra C_{E}\slash\langle\nu\rangle\cong C\ltimes E$ is a Galois covering  with an  admissible torsion-free covering group $\langle\nu\rangle\cong\ZZ$.

In the case $E=D(C)$, the algebra $C_{E}$ is called the \textit{repetitive algebra} of $C$ (equivalently, the \textit{repetitive category}), denoted by
$\widehat{C}$,  and it is a self-injective algebra \cite{HW}. The automorphism $\nu_{\widehat{C}}$ is called the \textit{Nakayama automorphism} of
$\widehat{C}$ and $\widehat{C}\slash\langle\nu_{\widehat{C}}\rangle$ is isomorphic to the usual trivial extension algebra $T(C)= C\ltimes D(C)$. 

If $C$ is a tilted algebra and $E=\Ext_{C}^{2}(DC,C)$, then $C_{E}$ is called the \textit{cluster repetitive algebra} of $C$ and
denoted by $\check{C}$ \cite{ABS}. In this case, the trivial extension $C\ltimes E$ of $C$ by $E=\Ext_{C}^{2}(DC,C)$ is called the \textit{relation
extension algebra} and denoted by $\wt{C}$ \cite{ABS2}. Remarkably, it follows from \cite{ABS2} that $\wt{C}$ is a \textit{cluster-tilted algebra} in the
sense of \cite{BMR}. This means that $\wt{C}$ is the endomorphism algebra of a cluster-tilting object in a cluster category. Moreover, every cluster-tilted algebra occurs in that way. \epv
\end{exa}

\subsection{Covering functors}

A Galois $G$-covering functor $F:R\ra A\cong R\slash G$ induces three functors on the level of modules: $F_{\bullet}:\MOD(A)\ra\MOD(R)$ and $F_{\lambda},F_\rho:\MOD(R)\ra\MOD(A)$. These functors closely link the representation theories of $R$ and $A$. Here we aim to define these functors in detail and describe their main properties.  

Assume that $F\colon R\ra A$ is a Galois $G$-covering functor. Then the \textit{pull-up} functor $F_{\bullet}:\MOD(A)\ra\MOD(R)$ associated with $F$ is the functor $(-)\circ F^{\op}$. The pull-up functor is exact and has the left adjoint $F_{\lambda}:\MOD(R)\ra\MOD(A)$ and the right adjoint $F_{\rho}:\MOD(R)\ra\MOD(A)$ which are called the \textit{push-down} functors. We recall the description of the push-down functors below. First we introduce some terminology. 

Assume that $X_{1},\dots,X_{n}$ and $Y_{1},\dots,Y_{m}$ are objects of an abelian category $\CC$ and let $$f\in{}_{\CC}(\bigoplus_{i=1}^{n}X_{i},\bigoplus_{j=1}^{m}Y_{j})$$ be a morphism. Then $f=[f_{ji}]_{i=1,\dots,n}^{j=1,\dots,m}$ where $p_{j}:\bigoplus_{k=1}^{n}Y_{k}\ra Y_{j}$ is the split epimorphism, $u_{i}:X_{i}\ra\bigoplus_{k=1}^{m}X_{k}$ is the split monomorphism and $f_{ji}=p_{j}fu_{i}$, for any $i=1,\dots,n$, $j=1,\dots,m$. We say in this case that $f$ \textit{is defined by homomorphisms $f_{ji}$}. We assume similar terminology for morphisms between direct products of objects in $\CC$.

Assume that $M:R^{\op}\ra\MOD(K)$ is an $R$-module. We define the $A$-module $F_{\lambda}(M):A^{\op}\ra\MOD(K)$ in the following way. Assume that $a\in\ob(A)$ and $a=F(x)$, for some $x\in\ob(R)$. Then we have $$F_{\lambda}(M)(a)=\bigoplus_{g\in G}M(gx).$$ Assume that $\alpha\in A(b,a)$ and $a=F(x),b=F(y)$, for some $x,y\in\ob(R)$. Since $F$ induces an isomorphism $$\bigoplus_{g\in G}R(gy,x)\cong A(F(y),F(x)),$$ there are $\alpha_{g}:gy\ra x$, for $g\in G$, such that $\alpha=\sum_{g\in G}F(\alpha_{g})$. Then the homomorphism $$F_{\lambda}(M)(\alpha):F_{\lambda}(M)(a)\ra F_{\lambda}(M)(b)$$ is defined by homomorphisms $M(g\alpha_{g^{-1}h}):M(gx)\ra M(hy)$, for any $g,h\in G$. Assume that $f:M\ra N$ is an $R$-module homomorphism and $f=(f_{x})_{x\in\ob(R)}$, $f_{x}:M(x)\ra N(x)$. Then $F_{\lambda}(f):F_{\lambda}(M)\ra F_{\lambda}(N)$, $F_{\lambda}(f)=(\hat{f}_{a})_{a\in\ob(A)}$ and $\hat{f}_{a}:F_{\lambda}(M)(a)\ra F_{\lambda}(N)(a)$ is defined by homomorphisms $f_{gx}:M(gx)\ra N(gx)$, for any $g\in G$. For the module $F_{\rho}(M):A^{\op}\ra\MOD(K)$ we have $$F_{\rho}(M)(a)=\prod_{g\in G}M(gx)$$ and the rest of the definition is similar to the case of $F_{\lambda}$. We refer the reader to \cite[Remark 3.3]{P6'} for equivalent definitions of push-down functors in terms of the tensor product.

Observe that $F_{\lambda}$ is a subfunctor of $F_{\rho}$ and both functors coincide on the category of finite dimensional $A$-modules. It is important to note that general covering functors do not have this property. Moreover, if an $R$-module $M$ is finite dimensional, then $F_{\lambda}(M)$ is finite dimensional. Hence the functor $F_{\lambda}$ restricts to a functor $\mod(R)\ra\mod(A)$. This functor is also denoted by $F_{\lambda}$. 

Assume that $R$ is a locally bounded $K$-category, $G$ is a group of $K$-linear automorphisms of $R$ acting freely on the objects of $R$ and $g\in G$. Given $R$-module $M$ we denote by ${}^{g}M$ the module $M\circ g^{-1}$. Given $R$-module homomorphism $f:M\ra N$ we denote by ${}^{g}f$ the $R$-module homomorphism ${}^{g}M\ra {}^{g}N$ such that ${}^{g}f_{x}=f_{g^{-1}x}$, for any $x\in\ob(R)$. This defines an action of $G$ on $\MOD(R)$. It is easy to see that the map $f\mapsto {}^{g}f$ defines isomorphism of vector spaces ${}_{R}(M,N)\cong{}_{R}({}^{g}M,{}^{g}N)$.

Assume that $R$ is a locally bounded $K$-category and $X,Y\in\mod(R)$. In the Galois covering theory one frequently uses an observation that there are only finitely many elements $g\in G$ such that ${}_{R}(X,{}^{g}Y)\neq 0$ and ${}_{R}({}^{g}X,Y)\neq 0$ (this is a straightforward consequence of the assumption that $G$ acts freely on the objects of $R$). Applying this and the fact that $(F_{\lambda},F_{\bullet})$ is an adjoint pair one can show that the three bifunctors ${}_{A}(F_{\lambda}(-),F_{\lambda}(\cdot))$, $\bigoplus_{g\in G}{}_{R}(-,{}^{g}(\cdot))$ and $\bigoplus_{g\in G}{}_{R}({}^{g}(-),\cdot)$ are equivalent and this equivalence is induced by the push-down functor $F_{\lambda}:\mod(R)\ra\mod(A)$. In particular, $F_{\lambda}$ induces natural isomorphisms $$\nu_{X,Y}:\bigoplus_{g\in G}{}_{R}({}^{g}X,Y)\ra{}_{A}(F_{\lambda}(X),F_{\lambda}(Y))$$ of vector spaces, for any $X,Y\in\mod(R)$, given by $$\nu_{X,Y}((f_{g})_{g\in G})=\sum_{g\in G}F_{\lambda}(f_{g})$$ where $f_{g}:{}^{g}X\ra Y$, for any $g\in G$. In this description we identify $F_{\lambda}({}^{g}X)$ with $F_{\lambda}(X)$, for any $R$-module $X$ and $g\in G$. This isomorphism is used freely in the paper.

The \textit{support} $\supp (M)$ of a module $M\in\MOD(R)$ is the full subcategory of $R$ formed by all objects $x$ in $R$ such that $M(x)\neq 0$. The category $R$ is \textit{locally support-finite} \cite{DoSk} if and only if for any $x\in\ob(R)$ the union of the sets $\supp(M)$, where $M\in\ind(R)$ and $M(x)\neq 0$, is finite.

We say that the group $G$ is \textit{admissible} if and only if $G$ acts freely on the objects of $R$ and there are only finitely many $G$-orbits. In this case the orbit category $R\slash G$ is finite and we often treat it as an algebra. If $G$ is admissible, then we say that $G$ \textit{acts freely on $\ind(R)$} if and only if ${}^{g}M\cong M$ implies that $g=1$, for any $M\in\ind(R)$ and $g\in G$. 

The main properties of the push-down functor $F_{\lambda}:\mod(R)\ra\mod(A)$ are summarized in the following theorem, based on \cite{Ga,BoGa,MP,DoSk}.

\begin{thm}\label{0t1} Assume that $R$ is a locally bounded $K$-category, $G$ an admissible group of $K$-linear automorphisms of $R$ and $F:R\ra A$ the Galois covering. Then the functor $F_{\lambda}:\mod(R)\ra\mod(A)$ satisfies the following assertions.
\begin{enumerate}[\rm(1)]
 \item 	There are isomorphisms $F_{\lambda}({}^{g}M)\cong F_{\lambda}(M)$ and $F_{\lambda}({}^{g}f)\cong F_{\lambda}(f)$, for any $R$-module $M$, $R$-homomorphism $f$ and $g\in G$.
\item There is an isomorphism $F_{\bullet}(F_{\lambda}(M))\cong\bigoplus_{g\in G}{}^{g}M$, for any $R$-module $M$. Moreover, if $X,Y\in\ind(R)$, then $F_{\lambda}(X)\cong F_{\lambda}(Y)$ implies $Y\cong {}^{g}X$, for some $g\in G$.
    \item If the group $G$ is torsion-free, then $G$ acts freely on $\ind(R)$. If the latter condition holds, then $F_{\lambda}:\mod(R)\ra\mod(A)$ preserves indecomposability.
\item The functor $F_{\lambda}$ induces the following isomorphisms of vector spaces $$\bigoplus_{g\in G}{}_{R}({}^{g}X,Y)\cong{}_{A}(F_{\lambda}(X),F_{\lambda}(Y))\cong\bigoplus_{g\in G}{}_{R}(X,{}^{g}Y),$$ for any $X,Y\in\mod(R)$.
	\item Assume that the group $G$ is torsion-free and $R$ is locally support-finite. The the push-down functor $F_{\lambda}:\mod(R)\ra\mod(A)$ is dense. This means that for any $M\in\mod(A)$ there is $X\in\mod(R)$ such that $F_{\lambda}(X)\cong M$.
\end{enumerate}
\end{thm}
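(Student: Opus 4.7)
The plan is to prove the five assertions in order: (1), (2), (4) follow formally from the construction of $F_\lambda$ and the adjunction $(F_\lambda,F_\bullet)$, (3) combines a support argument with Krull-Schmidt, and (5) is substantially deeper. For (1), unfolding the definition at $a=F(x)$ gives $F_\lambda(M)(a)=\bigoplus_{h\in G}M(hx)$ and $F_\lambda({}^g M)(a)=\bigoplus_h M(g^{-1}hx)$, and reindexing $h\mapsto gh$ yields a natural isomorphism; naturality in morphisms uses that the decomposition $\alpha=\sum_h F(\alpha_h)$ is $G$-equivariant. For (2), the same unfolding gives $F_\bullet(F_\lambda M)(x)=\bigoplus_g M(gx)=\bigoplus_g({}^{g^{-1}}M)(x)$, whence $F_\bullet F_\lambda M\cong\bigoplus_g {}^g M$; applying $F_\bullet$ to an isomorphism $F_\lambda X\cong F_\lambda Y$ and comparing indecomposable summands via Krull-Schmidt in $\Mod(R)$ gives $Y\cong {}^g X$ for some $g$.

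For (4), combine the adjunction $(F_\lambda,F_\bullet)$ with (2):
\begin{equation*}
\Hom_A(F_\lambda X,F_\lambda Y)\cong\Hom_R(X,F_\bullet F_\lambda Y)\cong\Hom_R\bigl(X,\textstyle\bigoplus_g {}^g Y\bigr)\cong\bigoplus_g\Hom_R(X,{}^g Y),
\end{equation*}
where the last step replaces $\prod_g$ by $\bigoplus_g$ using the finiteness of nonvanishing summands recorded in the text just before the theorem. The symmetric isomorphism follows from the natural identification $\Hom_R({}^g X,Y)\cong\Hom_R(X,{}^{g^{-1}}Y)$ induced by the $G$-action. For (3), if ${}^g M\cong M$ for $M\in\ind(R)$, then $g\cdot\supp(M)=\supp(M)$; since $G$ acts freely on $\ob(R)$, the stabilizer $G_M=\{g:{}^g M\cong M\}$ injects into the symmetric group on the finite set $\supp(M)$, so $G_M$ is finite and torsion-freeness of $G$ forces $G_M=1$. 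Granting this freeness, setting $X=Y=M$ in (4) yields a ring isomorphism $\End_A(F_\lambda M)\cong\bigoplus_g\Hom_R({}^g M,M)$ under which only the summand at $g=1$, namely the local ring $\End_R(M)$, contains isomorphisms; a standard Krull-Schmidt argument then shows the non-units form a two-sided ideal, so $\End_A(F_\lambda M)$ is local and $F_\lambda M$ is indecomposable.

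The main obstacle is (5), density of $F_\lambda\colon\mod(R)\ra\mod(A)$ under the additional hypothesis that $R$ is locally support-finite; this is the Dowbor-Skowro\'nski theorem from \cite{DoSk}. By Krull-Schmidt it suffices to realize every $M\in\ind(A)$ as $F_\lambda(X)$ for some $X\in\ind(R)$. Local support-finiteness is essential because it controls, uniformly in $x$, the $G$-orbits that can interact with a given indecomposable, so that the Hom-sums of (4) and their analogues for almost split sequences remain globally manageable. The proof then shows that the essential image of $F_\lambda$ is closed under the AR-theoretic constructions generating $\ind(A)$, starting from projective indecomposables, which are visibly of the form $F_\lambda(P_x)$. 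Unlike the formal arguments for (1)--(4), this step depends on a delicate combinatorial-geometric analysis of orbits versus supports, and I would cite \cite{DoSk} for the full technical details rather than reproducing them here.
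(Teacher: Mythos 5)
The paper states Theorem \ref{0t1} without proof, presenting it as a summary of classical results ``based on'' Gabriel, Bongartz--Gabriel, Mart\'inez-Villa--de la Pe\~na and Dowbor--Skowro\'nski, so there is no internal argument to compare against; your sketch correctly reconstructs the standard proofs of (1)--(4) (unfolding the definition of $F_\lambda$, Azumaya--Krull--Schmidt for $\bigoplus_{g}{}^{g}M$, the adjunction plus finiteness of non-vanishing summands for (4), and the finite-stabilizer and local-endomorphism-ring arguments for (3)), and your deferral of the density statement (5) to \cite{DoSk} matches exactly how the paper itself treats it. The only nitpick is the phrasing in (4) about ``replacing $\prod_g$ by $\bigoplus_g$'': the correct justification is that $\Hom$ out of the finitely generated module $X$ commutes with direct sums (equivalently, only finitely many summands receive a nonzero component), which is the finiteness fact the paper records just before the theorem.
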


We recall that if the group $G$ is torsion-free, then the functor $F_{\lambda}:\mod(R)\ra\mod(A)$ preserves right and left minimal almost split homomorphisms, Auslander-Reiten sequences and induces an injection $\ind(R)\slash G\hookrightarrow\ind(A)$, see \cite{BoGa,DoSk}.

In case $G$ is torsion free, we say that $F_{\lambda}:\mod(R)\ra\mod(A)$ is a \emph{Galois $G$-precovering of module categories}. If additionally $F_{\lambda}$ is dense, for example when $R$ is locally support-finite, we say that $F_{\lambda}:\mod(R)\ra\mod(A)$ is a \emph{Galois $G$-covering of module categories}. Similar terminology is used for functors between any additive Krull-Schmidt $K$-categories if they satisfy analogous conditions. We refer to \cite[Definition 2.3]{P6'} for the precise formulation. 

The push-down functor $F_{\lambda}:\mod(R)\ra\mod(A)$ is not dense in general. This is the source of the following definition, originated in \cite{DoSk}. 

\begin{df}\label{d1&2} Assume that $F:R\ra A$ is a Galois $G$-covering and $G$ is torsion-free. An indecomposable module $M\in\mod(A)$ is of the \emph{first kind} if and only if $M$ lies in the image of the functor $F_\lambda:\mod(R)\ra\mod(A)$. Otherwise, $M$ is of the \emph{second kind}.
\end{df} We emphasize that \cite{DoSk} gives important characterizations of the modules of the first and the second kind for a wide class of Galois coverings.

A particularly important case of a Galois covering $F:R\ra A$ is the situation when $R$ is \emph{locally representation-finite}. This means that for any $x\in\ob(R)$ there are only finitely many indecomposable modules $M\in\mod(R)$ such that $M(x)\neq 0$. In this context, the following classical result is useful, see \cite{Ga,BoGa,BrGa,MP}. We apply it in Section 4.

We shall denote by $[X]$ the vertices of the Auslander-Reiten quiver $\Gamma_R$, that is, the isomorphism classes of indecomposable finite dimensional $R$-modules $X$. 

\begin{thm}\label{t10} Assume that $R$ is a locally bounded $K$-category, $G$ an admissible torsion-free group $K$-linear automorphisms of $R$ and $F:R\ra A$ the associated Galois covering. The following assertions hold.
\begin{enumerate}[\rm(1)]
 \item The category $R$ is locally representation-finite if and only if the category $\ind(R)$ is locally bounded. In this case, the category $A$ is representation-finite.
	\item Assume that $R$ is locally representation-finite. In this case, the push-down functor $F_{\lambda}:\ind(R)\ra\ind(A)$ is a Galois covering of locally bounded $K$-categories which induces a Galois covering $\Gamma_{R}\ra\Gamma_{A}$ of translation quivers such that $[X]\mapsto[F_{\lambda}(X)]$, for any $X\in\ind(R)$.
\end{enumerate}
\end{thm}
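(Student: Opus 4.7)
The plan is to derive both assertions systematically from Theorem \ref{0t1}, supplemented by Yoneda's lemma and elementary support considerations.

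For the equivalence in (1), my approach would exploit that, since $R$ is locally bounded, each $P_x = R(-,x)$ is an indecomposable projective with $\Hom_R(P_x, M) \cong M(x)$. If $\ind(R)$ is locally bounded, then $\{[M] \in \ind(R) : M(x) \neq 0\}$ must be finite for every $x$, giving local representation-finiteness of $R$. Conversely, for $M \in \ind(R)$ the support $\supp(M)$ is finite, and any $N$ with $\Hom_R(M,N) \neq 0$ satisfies $\supp(M) \cap \supp(N) \neq \emptyset$; hence the set of such $[N]$ embeds into $\bigcup_{x \in \supp(M)} \{[N] : N(x) \neq 0\}$, a finite set when $R$ is locally representation-finite, and each summand $\dim_K \Hom_R(M,N) \leq \dim_K M \cdot \dim_K N$ is finite, so $\ind(R)$ is locally bounded (the dual sum is handled symmetrically). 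For representation-finiteness of $A$, I would note that local representation-finiteness of $R$ implies locally support-finite, so Theorem \ref{0t1}(5) ensures density of $F_\lambda$; picking orbit representatives $x_1, \ldots, x_n$ of $\ob(R)/G$ (finite by admissibility of $G$), every $[M] \in \ind(R)$ has some $G$-translate supported at one of the $x_i$, which bounds $|\ind(R)/G|$; combined with the injection $\ind(R)/G \hookrightarrow \ind(A)$ from Theorem \ref{0t1}(2), this completes (1).

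For (2), I plan to verify the four defining conditions of Definition \ref{dg} for the functor $F_\lambda : \ind(R) \to \ind(A)$ equipped with the action $M \mapsto {}^g M$ of $G$. Freeness on objects follows from Theorem \ref{0t1}(3); the hom-space isomorphism is part (4); object surjectivity is density; invariance $F_\lambda \circ g \cong F_\lambda$ is part (1); and the orbit identification of preimages is part (2). Local boundedness of $\ind(R)$ is the assertion of (1) just proved, and that of $\ind(A)$ follows from representation-finiteness of $A$. For the induced covering on Auslander-Reiten quivers, I would invoke the remark following Theorem \ref{0t1}: $F_\lambda$ preserves Auslander-Reiten sequences and right/left minimal almost split homomorphisms, yielding a well-defined morphism $[X] \mapsto [F_\lambda(X)]$ of translation quivers; restricting the hom-space bijection of Theorem \ref{0t1}(4) to the radical quotient $\rad/\rad^2$ (the space of irreducible morphisms) produces the required bijection of arrows in each fiber, and the quotient identification $\Gamma_A \cong \Gamma_R/G$ is again a consequence of part (2).

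The main obstacle I anticipate is the last step: verifying that the module-level Galois decomposition descends cleanly to the irreducible-morphism level, so that each arrow of $\Gamma_A$ lifts to exactly one $G$-orbit of arrows in $\Gamma_R$. This is essentially a compatibility check between the $G$-action and the radical filtration of the relevant hom-spaces, combined with freeness of the action on $\ind(R)$, and should follow by applying Theorem \ref{0t1}(4) to successive radical quotients.
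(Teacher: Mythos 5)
Your proposal is correct and follows essentially the same route as the paper: the equivalence in (1) is proved by exactly the paper's two arguments (the support-intersection bound for one direction and $\Hom_R(P_x,M)\cong M(x)$ for the other), and everything else is read off from Theorem \ref{0t1} together with the remark on preservation of almost split morphisms, which is precisely how the paper (which only writes out the first part of (1) and refers the rest to Theorem \ref{0t1}) intends the remaining assertions to be obtained. The extra details you supply — density via local support-finiteness, finiteness of $\ind(R)/G$ from the finitely many $G$-orbits of objects, and the verification of Definition \ref{dg} for $F_\lambda:\ind(R)\ra\ind(A)$ — are all sound.
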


\begin{proof} Theorem follows mainly from \ref{0t1}. We only show the first part of $(1)$, because it is very instructive. First observe that $\ind(R)$ is locally bounded if and only if the hom-functors ${}_{R}(*,X)$ and ${}_{R}(X,*)$ have finite supports, for any $X\in\ind(R)$. 

Assume that $R$ is locally representation-finite and let $X\in\ind(R)$. We show that the functor ${}_{R}(*,X)$ has finite support. Let $R_a$ be the finite set of all indecomposable modules in $\mod(R)$ which are nonzero in $a\in\ob(R)$. Further, let $\CX$ be the union of all the sets $R_{a}$ such that $a\in\supp(X)$. If $Y\in\ind(R)$ and ${}_{R}(Y,X)\neq 0$, then $\supp(X)\cap\supp(Y)\neq\emptyset$ and hence $Y\in\CX$. Since $\CX$ is finite, we conclude that ${}_{R}(*,X)$ has finite support and thus $\ind(R)$ is locally bounded. Arguments for the functor ${}_{R}(X,*)$ are analogous.

Assume now that the hom-functors have finite supports. We prove that $R$ is locally representation-finite. Let $a\in\ob(R)$ and recall that $X(a)\cong{}_{R}(P_a,X)$, for any indecomposable module $X\in\mod(R)$, and hence $X(a)\neq 0$ if and only if ${}_{R}(P_a,X)\neq 0$. Since the support of the functor ${}_{R}(P_a,*)$ is finite, we conclude that there is a finite number of indecomposable $R$-modules $X\in\mod(R)$ with $X(a)\neq 0$. Thus $R$ is locally representation-finite.
\end{proof}

\section{Covering theory of functor categories}

In this section we outline the main results from \cite[Section 3]{P6'}. First we recall some basic facts about functor categories and general tensor products for categories of modules over small $K$-categories. Then we introduce \emph{Galois covering theory of functor categories}. We view this theory as the theory of the left and the right adjoint functors $$\Phi,\Theta:\MOD(\mod(R))\ra\MOD(\mod(A))$$ to the pull-up functor $$\Psi=(F_{\lambda})_{\bullet}:\MOD(\mod(A))\ra\MOD(\mod(R)),$$ along the push-down functor $F_{\lambda}\colon\mod(R)\ra\mod(A)$ where $(F_{\lambda})_{\bullet}=(-)\circ F_{\lambda}$. It turns out that when $F_{\lambda}$ is dense, $\Phi$ and $\Theta$ are natural generalizations of  $F_{\lambda}$ and $F_{\rho}$ to the level of functor categories. Generally, $\Phi$ and $\Theta$ restrict to categories $\CF(R),\CF(A)$ of finitely presented functors and the restricted functors $\Phi,\Theta:\CF(R)\ra\CF(A)$ coincide. This is important for applications to the theory of Krull-Gabriel dimension, see Section 5. Finally, we show that $\Phi:\CF(R)\ra\CF(A)$ is a Galois $G$-precovering of functor categories if the group $G$ is torsion-free.

\subsection{Functor categories and tensor products}

Assume that $R$ is a locally bounded $K$-category. Set $\CR=\mod(R)$ and denote by $\mod(\CR)$ the category of all contravariant $K$-linear functors $\mod(R)\ra\mod(K)$.

Let $M$ be an $R$-module. Then a \textit{contravariant hom-functor} represented by $M$ is the functor $H_{M}:\mod(R)\ra\MOD(K)$ such that $H_{M}(X)={}_{R}(X,M)$, for any $X\in\mod(R)$, and if $f\in{}_{R}(X,Y)$, then $H_{M}(f):{}_{R}(Y,M)\ra{}_{R}(X,M)$ where $H_{M}(f)(g)=gf$, for any $g\in{}_{R}(Y,M)$. The functor $H_{M}:\mod(R)\ra\MOD(K)$ is denoted by ${}_{R}(-,M)$, but $-$ may be replaced by $*,?$ etc. We agree that the domain of a hom-functor can be enlarged to $\Mod(R)$ or $\MOD(R)$.

Assume that $f\in{}_{R}(M,N)$ is an $R$-homomorphism. Then $f$ induces a homomorphism of hom-functors ${}_{R}(-,f):{}_{R}(-,M)\ra{}_{R}(-,N)$ such that ${}_{R}(X,f):{}_{R}(X,M)\ra{}_{R}(X,N)$ is defined by ${}_{R}(X,f)(g)=fg$, for any $g\in{}_{R}(X,M)$. The Yoneda lemma implies that the function $f\mapsto{}_{R}(-,f)$ defines an isomorphism $${}_{R}(M,N)\ra{}_{\CR}({}_{R}(-,M),{}_{R}(-,N))$$ of vector spaces. Moreover, this yields $M\cong N$ if and only if ${}_{R}(-,M)\cong{}_{R}(-,N)$. 

A functor $F\in\mod(\CR)$ is \textit{finitely generated} if and only if there exists an epimorphism of functors ${}_{R}(-,N)\ra F$, for some $N\in\mod(R)$. Furthermore, $F$ is \textit{finitely presented} if and only if there exists an exact sequence of functors $${}_{R}(-,M)\xrightarrow{{}_{R}(-,f)}{}_{R}(-,N)\ra F\ra 0,$$ for some $M,N\in\mod(R)$ and $R$-module homomorphism $f:M\ra N$. In this situation we have $F\cong\Coker{}_{R}(-,f)$ and thus $F(X)$ is isomorphic to the cokernel of the map ${}_{R}(X,f):{}_{R}(X,M)\ra{}_{R}(X,N)$.

The full subcategory of $\mod(\CR)$, formed by all finitely presented functors, is denoted as $\CF(R)$. Observe that ${}_{R}(-,M)\in\CF(R)$, for any $M\in\mod(R)$ and recall that hom-functors are projective objects of the category $\CF(R)$ \cite{Au0}.

The following result is well-known, see for example \cite[Proposition 3.4]{P6'}.

\begin{prop}\label{p3} Assume that $R$ is a locally bounded $K$-category. The category $\CF(R)$ is an abelian Krull-Schmidt hom-finite $K$-category. Moreover, a functor $T\in\CF(R)$ has local endomorphism $K$-algebra if and only if $T$ is indecomposable.
\end{prop}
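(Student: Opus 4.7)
The plan is to establish the four assertions of the proposition in sequence: first that $\CF(R)$ is abelian, then hom-finite, then Krull-Schmidt, and finally that an object in $\CF(R)$ is indecomposable if and only if its endomorphism algebra is local. Each step follows a classical pattern going back to Auslander's foundational work, with the only inputs being that $R$ is locally bounded and that $\mod(R)$ is itself abelian.

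First, I would verify that $\CF(R)$ is abelian by appealing to the standard principle that for any additive category $\CC$ admitting weak kernels, the category of finitely presented contravariant $K$-linear functors $\CC\to\MOD(K)$ is abelian. Since $\mod(R)$ is abelian it has genuine kernels, which are in particular weak kernels, so this applies with $\CC=\mod(R)$. Concretely, given a morphism $\phi\colon F\to G$ between finitely presented functors, one lifts $\phi$ to a morphism between chosen projective presentations of $F$ and $G$ and then uses the weak-kernel property of $\mod(R)$, together with the horseshoe construction, to produce finite presentations for both $\Ker\phi$ and $\Coker\phi$.

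For hom-finiteness, I would exploit the local boundedness of $R$ to show first that every $F\in\CF(R)$ actually takes values in $\mod(K)$. Indeed, if a presentation ${}_{R}(-,M)\to{}_{R}(-,N)\to F\to 0$ is given, then for each $X\in\mod(R)$ the space $F(X)$ is the cokernel of a map between $\Hom_R(X,M)$ and $\Hom_R(X,N)$, both of which are finite-dimensional since $R$ is locally bounded and $X,M,N$ lie in $\mod(R)$. Then for any $G\in\CF(R)$, applying $\Hom_{\CR}(-,G)$ to the presentation of $F$ and invoking the Yoneda lemma embeds $\Hom_{\CR}(F,G)$ into the finite-dimensional space $G(N)$, so $\Hom_{\CR}(F,G)$ itself is finite-dimensional.

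For the Krull-Schmidt property, I would invoke the general principle that any additive $K$-category in which endomorphism rings are finite-dimensional and idempotents split is Krull-Schmidt: such endomorphism algebras are semiperfect, hence every object decomposes essentially uniquely into indecomposables with local endomorphism rings. Idempotent-splitting in $\CF(R)$ is automatic since $\CF(R)$ is abelian, and the finite-dimensionality of $\End(T)$ for $T\in\CF(R)$ follows from the previous paragraph. The final clause of the proposition then drops out immediately from this same principle: in a Krull-Schmidt category an object is indecomposable if and only if its endomorphism algebra is local. The main obstacle, should one desire a fully self-contained account, is the abelianness step, where the interplay between projective presentations and weak kernels in $\mod(R)$ must be carried out in detail; the remaining steps are essentially bookkeeping once abelianness is in hand.
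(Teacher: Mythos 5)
Your argument is correct and is exactly the standard one: the paper itself gives no proof of Proposition \ref{p3}, merely citing \cite{P6'}*{Proposition 3.4}, and the chain you describe (weak kernels in the abelian category $\mod(R)$ give abelianness of $\CF(R)$ \`a la Auslander--Freyd; Yoneda plus a projective presentation embeds ${}_{\CR}(F,G)$ into the finite-dimensional space $G(N)$; hom-finiteness plus split idempotents yields Krull--Schmidt; and the local-endomorphism-ring characterization of indecomposables is then automatic) is precisely the classical route the cited reference takes. No gaps.
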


Assume that $\CA,\CB$ are small $K$-categories, in particular $\CA,\CB$ may be abelian. We denote by $$\CA(-,-):\CA^{\op}\times\CA\ra\MOD(K)$$ the \emph{hom-bifunctor}. We denote by $\CA\lMOD$ the category of all \emph{left $\CA$-modules}, that is, the category consisting of covariant $K$-linear functors $\CA\ra\MOD(K)$ as objects and natural transformations as morphisms. The category of all \emph{right $\CA$-modules}, denoted by $\MOD(\CA)$, is the category consisting of covariant $K$-linear functors $\CA^{\op}\ra\MOD(K)$ (equivalently, contravariant functors $\CA\ra\MOD(K)$) as objects and natural transformations as morphisms. We often identify $\MOD(\CA^{\op})$ with $\CA\lMOD$, because a left $\CA$-module is a right $\CA^{\op}$-module. An \emph{$\CA$-$\CB$-bimodule} is a functor $\CB^{\op}\times\CA\ra\MOD(K)$, that is, a functor which is contravariant in the first variable and covariant in the second.

Objects of $\CA$ are usually denoted with lowercase letters and objects of $\MOD(\CA)$ or $\CA\lMOD$ with uppercase letters. Observe that $\CA(-,-)$ is an $\CA$-$\CA$-bimodule and if $a\in\CA$, then $\CA(a,-)\in\CA\lMOD$ and $\CA(-,a)\in\MOD(A)$. We also denote by $${}_{\CA}(-,-):\MOD(\CA)^{\op}\times\MOD(\CA)\ra\MOD(K)$$ the \emph{hom-bifunctor}. Clearly ${}_{\CA}(-,-)=\MOD(\CA)(-,-)$ but we do not use the latter notation. Observe that ${}_{\CA^{\op}}(-,-)$ is the hom-bifunctor defined for the category of left $\CA$-modules. The notation introduced above is consistent with that for modules over locally bounded $K$-categories.

We recall from \cite{FPN,Mi} that there exists a \emph{tensor product bifunctor} $$-\otimes_{\CA}-:\MOD(\CA)\times\CA\lMOD\ra\MOD(K)$$ such that, for any $\CA$-$\CB$-bimodule ${}_{\CA}M_{\CB}$ and $\CB$-$\CA$-bimodule ${}_{\CB}N_{\CA}$:
\begin{itemize}
  \item the functor $-\otimes{}_{\CA}M_{\CB}:\MOD(\CA)\ra\MOD(\CB)$ is the left adjoint to the functor ${}_{\CB}({}_{\CA}M_{\CB},-):\MOD(\CB)\ra\MOD(\CA)$,
  \item the functor ${}_{\CB}N_{\CA}\otimes_{\CA}-:\CA\lMOD\ra\CB\lMOD$ is the left adjoint to the functor ${}_{\CB^{\op}}({}_{\CB}N_{\CA},-):\CB\lMOD\ra\CA\lMOD$.
\end{itemize} It is known that $$M\otimes_{\CA}\CA(-,-)\cong M\textnormal{ and }\CA(-,-)\otimes_{\CA} N\cong N,$$ for any $M\in\MOD(\CA)$, $N\in\CA\lMOD$. In particular, we have natural isomorphisms $$M\otimes_{\CA}\CA(a,-)\cong M(a)\textnormal{ and }\CA(-,a)\otimes_{\CA} N\cong N(a),$$ for any $a\in\CA$. These isomorphisms are called the \emph{co-Yoneda isomorphisms}. As the left adjoints to the appropriate hom-functors, the tensor product functors $-\otimes{}_{\CA}M_{\CB}$ and ${}_{\CB}N_{\CA}\otimes_{\CA}-$ are unique up to natural equivalence. Hence we conclude that for locally bounded $K$-categories $\CA$ and $\CB$ these functors coincide with the usual tensor products for modules.

\subsection{Covering theory}

The \emph{pull-up functor} along a $K$-linear functor $G:\CA\ra\CB$ is the functor $$G_{\bullet}:\MOD(\CB)\ra\MOD(\CA)$$ defined as $(-)\circ G^{\op}$. Recall that this functor is often called the \emph{restriction functor}, but we do not use this terminology, see \cite[Remark 3.2]{P6'}. 

Let $F:R\ra A$ be a Galois $G$-covering and denote by $$\Psi:=(F_{\lambda})_{\bullet}:\MOD(\CA)\ra\MOD(\CR)$$ be the pull-up functor along $F_{\lambda}:\mod(R)\ra\mod(A)$. Applying the tensor product bifunctor we obtain the following result, describing the left adjoint and the right adjoint to $\Psi$, see \cite[Corollary 3.4, Theorem 3.1]{P6'}. We believe that this fact initiates the \emph{general Galois covering theory of functor categories}. In the sequel we set $\CR=\mod(R)$ and $\CA=\mod(A)$. 

\begin{thm}\label{c1} Assume that $F:R\ra A$ is a Galois $G$-covering. The functor $$\Phi:=?\otimes_{\CR}[{}_{A}(-,F_\lambda(*))]:\MOD(\CR)\ra\MOD(\CA)$$ is the left adjoint to $\Psi=(F_{\lambda})_{\bullet}$ and the functor $$\Theta:={}_{\CR}({}_{A}(F_{\lambda}(*),-),?):\MOD(\CR)\ra\MOD(\CA)$$ is the right adjoint to $\Psi$.
\end{thm}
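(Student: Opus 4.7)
The plan is to exhibit both adjunctions as special cases of the tensor-hom adjunction recalled in Section 3.1, and then use (co-)Yoneda to identify the pull-up $\Psi$ with the appropriate hom or tensor construction. The entire argument is formal once the variances are sorted out.

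First I would observe that $N := {}_{A}(-,F_{\lambda}(*))$ is an $\CR$-$\CA$-bimodule (contravariant in the $\CA$-variable $-$, covariant in the $\CR$-variable $*$), and dually that $N' := {}_{A}(F_{\lambda}(*),-)$ is an $\CA$-$\CR$-bimodule. By the general tensor-hom adjunction cited in Section 3.1, one then has two adjoint pairs
$$
-\otimes_{\CR}N \;\dashv\; {}_{\CA}(N,-)\colon \MOD(\CA)\to\MOD(\CR),
\qquad
-\otimes_{\CA}N' \;\dashv\; {}_{\CR}(N',-)\colon \MOD(\CR)\to\MOD(\CA).
$$
The left member of the first pair is $\Phi$ by definition, and the right member of the second pair is $\Theta$ by definition. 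It therefore suffices to identify ${}_{\CA}(N,-)$ and $-\otimes_{\CA}N'$ both with $\Psi$.

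For the first identification, I would fix $W\in\CR$; then the bimodule $N$ specialises to the representable right $\CA$-module ${}_{A}(-,F_{\lambda}(W))=\CA(-,F_{\lambda}(W))$, and Yoneda gives
$$
{}_{\CA}\bigl(\CA(-,F_{\lambda}(W)),Y\bigr)\;\cong\; Y(F_{\lambda}(W))\;=\;\Psi(Y)(W),
$$
for any $Y\in\MOD(\CA)$. Naturality in $W$ promotes this to an isomorphism of functors ${}_{\CA}(N,-)\cong\Psi$, and thus $\Phi\dashv\Psi$. For the second identification, fix $W\in\CR$ and note that ${}_{A}(F_{\lambda}(W),-)=\CA(F_{\lambda}(W),-)$ is a representable left $\CA$-module; the co-Yoneda isomorphism from Section 3.1 yields
$$
X\otimes_{\CA}\CA(F_{\lambda}(W),-)\;\cong\; X(F_{\lambda}(W))\;=\;\Psi(X)(W),
$$
again natural in $W$ and in $X\in\MOD(\CA)$. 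Hence $-\otimes_{\CA}N'\cong\Psi$, and thus $\Psi\dashv\Theta$.

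There is no real obstacle; the content of the theorem is essentially a bookkeeping exercise combining the existence of the tensor product bifunctor for modules over small $K$-categories with the representability of ${}_{A}(-,F_{\lambda}(W))$ and ${}_{A}(F_{\lambda}(W),-)$ in the respective module categories. The only step requiring care is to check that $N$ and $N'$ are bimodules of the correct type so that the tensor-hom adjunction delivers functors with domain and codomain matching those declared for $\Phi$, $\Psi$, $\Theta$; everything else is a direct appeal to the (co-)Yoneda lemma and the uniqueness of adjoints.
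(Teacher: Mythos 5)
Your proposal is correct and follows essentially the same route as the paper: the paper simply cites the general description of the left and right adjoints to a pull-up functor along an arbitrary $K$-linear functor (\cite[Theorem 3.1]{P6'}, \cite[Proposition 6.1]{Bu}) and specialises to $F_{\lambda}$, while you supply the standard proof of that general fact via the tensor-hom adjunction together with the Yoneda and co-Yoneda identifications of $\Psi$ with ${}_{\CA}(N,-)$ and $-\otimes_{\CA}N'$. The bimodule variances are sorted out exactly as in the paper, so there is nothing to correct.
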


\begin{proof} We recall from \cite[Theorem 3.1]{P6'}, see also \cite[Proposition 6.1]{Bu}, that for any $K$-linear functor $G:\CA\ra\CB$, the pull-up functor $G_{\bullet}:\MOD(\CB)\ra\MOD(\CA)$ has the left adjoint given by $$G_{L}=?\otimes_{\CA}\CB(-,G(*)):\MOD(\CA)\ra\MOD(\CB)$$ and the right adjoint given by $$G_{R}={}_{\CA}(\CB(G(*),-),?):\MOD(\CA)\ra\MOD(\CB).$$ Note that the bifunctor ${}_{A}(-,F_\lambda(*))$ is a $\CR$-$\CA$-bimodule, the bifunctor ${}_{A}(F_{\lambda}(*),-)$ is an $\CA$-$\CR$-bimodule and clearly $\Phi=(F_{\lambda})_L$, $\Theta=(F_{\lambda})_R$. This shows the thesis. 
\end{proof}

The following result, proved originally in \cite[Theorem 3.7]{P6'}, shows important properties of the functors $\Phi,\Theta:\MOD(\CR)\ra\MOD(\CA)$. Moreover, this is a crucial ingredient of the proof of the fact that Galois coverings do not increase Krull-Gabriel dimension \cite[Theorem 3.8]{P6'}. 

\begin{thm}\label{t2} Assume that $F:R\ra A$ is a Galois $G$-covering. Then $\Phi(\CF(R))\subseteq\CF(A)$, $\Theta(\CF(R))\subseteq\CF(A)$ and $\Phi|_{\CF(R)}\cong\Theta|_{\CF(R)}$. In particular, the functors $\Phi,\Theta:\CF(R)\ra\CF(A)$ are exact. Moreover, we have $$\Phi(\Coker_{R}(*,f))\cong\Theta(\Coker_{R}(*,f))\cong\Coker_{A}(-,F_{\lambda}(f)),$$ for any $R$-module homomorphism $f$. 
\end{thm}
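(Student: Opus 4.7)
The plan is to compute both $\Phi$ and $\Theta$ on representable functors $\CR(*,M)$, and then extend to all of $\CF(R)$ via a projective presentation $\CR(*,M_0)\xra{\CR(*,f)}\CR(*,M_1)\ra T\ra 0$ arising from some $R$-homomorphism $f:M_0\ra M_1$. For $\Phi$ the calculation is immediate: the co-Yoneda isomorphism $\CR(*,M)\otimes_{\CR}L\cong L(M)$ applied to the left $\CR$-module $L={}_A(-,F_\lambda(*))$ (covariant in $*$ via $F_\lambda$) yields $\Phi(\CR(*,M))\cong\CA(-,F_\lambda(M))$. Since $\Phi$ is right exact as a left adjoint, applying it to the presentation of $T$ directly gives $\Phi(T)\cong\Coker\CA(-,F_\lambda(f))\in\CF(A)$.

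The treatment of $\Theta$ is more delicate. First I would rewrite $\Psi(\CA(-,Y))=\CA(F_\lambda(-),Y)$ using the module-level adjunction $(F_\lambda,F_\bullet)$ as $\CR(-,F_\bullet(Y))|_{\mod(R)}$. Since $F_\bullet(Y)$ is locally finite-dimensional, it is the filtered union of its finite-dimensional $R$-submodules $L_i$, and since every $X\in\mod(R)$ is finitely presented over $R$, the functor $\CR(X,-)$ commutes with this filtered colimit. Therefore $\Psi(\CA(-,Y))\cong\varinjlim\CR(-,L_i)$ in $\MOD(\CR)$, and Yoneda converts Hom out of a colimit into a limit of Homs, giving
\[\Theta(T)(Y)={}_{\CR}(\Psi(\CA(-,Y)),T)\cong\varprojlim T(L_i).\]

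The main technical step is to identify $\varprojlim T(L_i)$ with $\Coker(\CA(-,F_\lambda(f)))(Y)$. Applying $\varprojlim$ to the exact sequence $0\ra\CR(L_i,\ker f)\ra\CR(L_i,M_0)\ra\CR(L_i,M_1)\ra T(L_i)\ra 0$ gives exactness at $\varprojlim T(L_i)$ provided $\varprojlim^{1}\CR(L_i,\ker f)=0$. The latter is the Mittag-Leffler condition, and it holds because $\ker f$ is finite-dimensional: each space $\CR(L_i,\ker f)$ is a finite-dimensional vector space, so the descending chain of images of the transition maps necessarily stabilises. Combining this with $\varprojlim\CR(L_i,M_j)\cong\CR(F_\bullet Y,M_j)\cong{}_A(Y,F_\rho M_j)={}_A(Y,F_\lambda M_j)$ (via the adjunction $(F_\bullet,F_\rho)$ and the coincidence $F_\lambda=F_\rho$ on $\mod(R)$), one obtains $\Theta(T)(Y)\cong\Coker({}_A(Y,F_\lambda(f)))$, hence $\Theta(T)\cong\Coker\CA(-,F_\lambda(f))\cong\Phi(T)$. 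Exactness of both functors on $\CF(R)$ is then automatic: $\Phi$ is right exact as a left adjoint, $\Theta$ is left exact as a right adjoint, and since they agree on $\CF(R)$ the common restriction is both, hence exact.
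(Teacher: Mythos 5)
Your proposal is correct in substance, and while your treatment of $\Phi$ (co-Yoneda plus right exactness of a left adjoint) is exactly the paper's, your proof that $\Theta(T)\cong\Coker{}_{A}(-,F_{\lambda}(f))$ runs along a genuinely different route. The paper makes the same preliminary reductions you do, namely $\Phi(T)\cong\Coker{}_{R}(F_{\bullet}(-),f)$ using $F_{\lambda}=F_{\rho}$ on $\mod(R)$ together with the adjunction $(F_{\bullet},F_{\rho})$, and $\Theta(T)\cong{}_{\CR}({}_{R}(*,F_{\bullet}(-)),T)$, but then it writes down an explicit comparison map $\varphi^{T}_{X}(\alpha+\Im{}_{R}(F_{\bullet}(X),f))=\pi\circ{}_{R}(*,\alpha)$ and verifies, deferring the technical details to \cite[Theorem 3.7]{P6'}, that this is a well-defined natural isomorphism; the virtue of the explicit formula is that naturality in $X$ and in $T$ is transparent. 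You instead identify $\Theta(T)(Y)$ with $\varprojlim T(L_{i})$ over the filtered family of finite-dimensional submodules $L_{i}$ of $F_{\bullet}(Y)$ and compute this limit from the four-term exact sequence; in effect you prove that the paper's $\varphi^{T}$ is bijective without having to guess it, your filtered-colimit argument substituting for the Yoneda lemma precisely where it fails for the large module $F_{\bullet}(Y)$. Two caveats. First, exactness at $\varprojlim T(L_{i})$ needs $\varprojlim^{1}$-vanishing not only for the kernel system ${}_{\CR}(L_{i},\Ker f)$ but also for the image system $\Im{}_{\CR}(L_{i},f)\subseteq{}_{\CR}(L_{i},M_{1})$; this is harmless since all terms are finite-dimensional, but note that the poset of submodules $L_{i}$ is in general uncountable and directed rather than a chain, and Mittag-Leffler does not control $\varprojlim^{1}$ over uncountable directed sets -- the correct justification is that a downward-directed family of subspaces of a finite-dimensional space has a least element (equivalently, inverse limits of linearly compact spaces are exact). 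Second, your isomorphisms are produced pointwise in $Y$ and from a chosen presentation of $T$, so to conclude $\Phi|_{\CF(R)}\cong\Theta|_{\CF(R)}$ one still has to check naturality in $Y$ and functoriality in $T$ -- routine here, as every arrow involved (co-Yoneda, adjunction units, limit comparison maps) is canonical, but it deserves a sentence. In sum: the paper's approach buys transparent naturality at the cost of an unmotivated formula whose bijectivity is a technical verification, while yours explains why the isomorphism must exist and is self-contained, at the cost of the $\varprojlim^{1}$ bookkeeping and the naturality check.
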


\begin{proof} The co-Yoneda isomorphisms yields $$\Phi({}_{R}(*,?))={}_{R}(*,?)\otimes_{\CR}[{}_{A}(-,F_\lambda(*))]\cong{}_{A}(-,F_\lambda(?)).$$ Moreover, $\Phi:\MOD(\CR)\ra\MOD(\CA)$ is right exact, because it is the left adjoint. Hence we obtain $\Phi(\Coker_{R}(*,f))\cong\Coker_{A}(-,F_{\lambda}(f))$ which shows that $\Phi(\CF(R))\subseteq\CF(A)$. We show that there is a natural equivalence of functors $$\varphi=(\varphi^{T})_{T\in\CF(R)}:\Phi|_{\CF(R)}\ra\Theta|_{\CF(R)}$$ where $\varphi^{T}:\Phi(T)\ra\Theta(T)$, for any $T\in\CF(R)$. Our aim is to sketch the definition of $\varphi^{T}:\Phi(T)\ra\Theta(T)$. Assume that $f:M\ra N\in\mod(R)$ and set $T=\Coker_{R}(*,f)$, that is, we have an exact sequence of functors of the form $${}_{R}(*,M)\xrightarrow{{}_{R}(*,f)}{}_{R}(*,N)\xrightarrow{\pi} T\ra 0.$$ Then we obtain $$\Phi(T)\cong\Coker_{A}(-,F_{\lambda}(f))\cong \Coker_{A}(-,F_{\rho}(f))\cong\Coker_{R}(F_{\bullet}(-),f)$$ since $F_{\lambda}=F_{\rho}$ on $\mod(R)$, and moreover $$\Theta(T)={}_{\CR}({}_{A}(F_{\lambda}(*),-),T)\cong {}_{\CR}({}_{R}(*,F_{\bullet}(-)),T).$$ Observe that $F_{\bullet}(\mod(A))\subseteq\Mod(R)$ so we cannot conclude that $\Coker_{R}(F_{\bullet}(-),f)\cong T(F_{\bullet}(-))$ nor can we apply the Yoneda lemma to state that ${}_{\CR}({}_{R}(*,F_{\bullet}(-)),T)\cong T(F_{\bullet}(-))$. Then we define $$\varphi^{T}=(\varphi^{T}_{X})_{X\in\mod(A)}:\Coker_{R}(F_{\bullet}(-),f)\ra{}_{\CR}({}_{R}(*,F_{\bullet}(-)),T)$$ in the following way. Assume that $X\in\mod(A)$ and $\alpha:F_{\bullet}(X)\ra N$ is an $R$-homomorphism. The $K$-linear map $$\varphi^{T}_{X}:\Coker_{R}(F_{\bullet}(X),f)\cong\frac{{}_{R}(F_{\bullet}(X),N)}{\Im{}_{R}(F_{\bullet}(X),f)}\ra{}_{\CR}({}_{R}(*,F_{\bullet}(X)),T)$$ is defined by the formula: $$\varphi^{T}_{X}(\alpha+\Im{}_{R}(F_{\bullet}(X),f))=\pi\circ{}_{R}(*,\alpha).$$ 

It is shown in \cite[Theorem 3.7]{P6'} that $\varphi^{T}_{X}$ is a well-defined $K$-linear isomorphism, for any $X\in\mod(A)$, $\varphi^{T}$ is a natural equivalence, for any $T\in\CF(R)$, and finally that $\varphi:\Phi|_{\CF(R)}\ra\Theta|_{\CF(R)}$ is a natural equivalence. This yields that $\Theta(\CF(R))\subseteq\CF(A)$ since $\Phi(\CF(R))\subseteq\CF(A)$, as shown above. In consequence, the functors $\Phi,\Theta:\CF(R)\ra\CF(A)$ are exact as they are both left and right exact.
\end{proof}


We describe the action of $G$ on $\MOD(\CR)$. Given a functor $T\in\MOD(\CR)$ and $g\in G$ we define $gT\in\MOD(\CR)$ to be as follows: $(gT)(X)=T({}^{g^{-1}}X)$ and $(gT)(f)=T({}^{g^{-1}}f)$, for any module $X\in\mod(R)$ and homomorphism $f\in\mod(R)$. Given a morphism of functors $\iota:T_{1}\ra T_{2}$ and $g\in G$ we define $g\iota:gT_{1}\ra gT_{2}$ as $(g\iota)_{X}=\iota_{X^{-g}}$, for any $X\in\mod(R)$. It is easy to see that this is an action of $G$. Moreover, the action restricts to $\CF(R)$, because $$g(\Coker{}_{R}(*,f))\cong\Coker{}_{R}(*,{}^{g}f)\in\CF(R).$$ If $G$ is torsion-free, then $G$ acts freely on $\CF(R)$ \cite[Proposition 4.1]{P6'}.


The following theorem shows fundamental properties of the functor $\Phi:\CF(R)\ra\CF(A)$ \cite[Theorem 4.3]{P6'}. This properties particularly imply that $\Phi$ is a Galois $G$-precovering of functor categories, provided that $G$ is torsion-free. The torsion-freeness of $G$ is necessary to show that $\Phi$ preserves indecomposable functors, but the remaining properties are independent of this assumption. 

\begin{thm}\label{t4} Assume that $R$ is a locally bounded $K$-category, $G$ an admissible group of $K$-linear automorphisms of $R$ and $F:R\ra A\cong R\slash G$ the Galois $G$-covering. The functor $\Phi:\CF(R)\ra\CF(A)$ has the following properties.

\begin{enumerate}[\rm(1)]
	\item There are isomorphisms $\Phi(T)\cong\Phi(gT)$ and $\Psi(\Phi(T))\cong\bigoplus_{g\in G}gT$, for any functor $T\in\CF(R)$ and $g\in G$. Moreover, if functors $T_{1},T_{2}\in\CF(R)$ are indecomposable, then $\Phi(T_{1})\cong\Phi(T_{2})$ implies $T_{1}\cong gT_{2}$, for some $g\in G$.
\item If $G$ is torsion-free, then $\Phi$ preserves indecomposability.
	\item Assume $T,T'\in\CF(R)$ and $U\in\CF(A)$. There are natural isomorphisms of vector spaces $${}_{\CA}(\Phi(T),U)\cong {}_{\CR}(T,\Psi(U))\textnormal{ and }{}_{\CR}(\Psi(U),T)\cong {}_{\CA}(U,\Phi(T))$$ which induce natural isomorphisms $$\bigoplus_{g\in G}{}_{\CR}(gT,T')\cong{}_{\CA}(\Phi(T),\Phi(T'))\cong\bigoplus_{g\in G}{}_{\CR}(T,gT').$$
\end{enumerate} Consequently, if the group $G$ is torsion-free, then the functor $\Phi:\CF(R)\ra\CF(A)$ is a Galois $G$-precovering of functor categories.
\end{thm}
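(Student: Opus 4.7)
The plan is to reduce every assertion to the cokernel formula of Theorem~\ref{t2}, namely $\Phi(\Coker_{R}(*,f))\cong\Coker_{A}(-,F_{\lambda}(f))$, and then to transport properties of $F_{\lambda}$ from Theorem~\ref{0t1} up to the functor level. Since every $T\in\CF(R)$ admits a presentation $T=\Coker_{R}(*,f)$, no generality is lost.

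For part (1), $\Phi(gT)\cong\Phi(T)$ follows from $g(\Coker_{R}(*,f))\cong\Coker_{R}(*,{}^{g}f)$ combined with $F_{\lambda}({}^{g}f)\cong F_{\lambda}(f)$ (Theorem~\ref{0t1}(1)). For $\Psi(\Phi(T))\cong\bigoplus_{g\in G}gT$ I would evaluate at $X\in\mod(R)$: exactness of $\Psi$ turns the left-hand side into the cokernel of the induced map on ${}_{A}(F_{\lambda}(X),F_{\lambda}(-))$, and the Galois decomposition of Hom-spaces from Theorem~\ref{0t1}(4), together with commutation of cokernels and direct sums, yields $\bigoplus_{g}\Coker_{R}({}^{g}X,f)=\bigoplus_{g}T({}^{g}X)=\bigoplus_{g}(gT)(X)$ after reindexing $g\mapsto g^{-1}$. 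The consequence that $\Phi(T_{1})\cong\Phi(T_{2})$ with $T_{i}$ indecomposable forces $T_{1}\cong gT_{2}$ is obtained by applying $\Psi$ and invoking Azumaya--Krull--Schmidt on $\bigoplus_{g}gT_{1}\cong\bigoplus_{g}gT_{2}$: each $gT_{i}$ is an indecomposable object of $\CF(R)$ with local endomorphism ring (Proposition~\ref{p3}, since $g$ acts as a self-equivalence), so the two decompositions match up to a permutation producing the required $g\in G$.

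Part (3) follows formally from adjunctions. The pair $(\Phi,\Psi)$ of Theorem~\ref{c1} directly gives ${}_{\CA}(\Phi(T),U)\cong{}_{\CR}(T,\Psi(U))$, while $(\Psi,\Theta)$ combined with $\Theta|_{\CF(R)}\cong\Phi|_{\CF(R)}$ from Theorem~\ref{t2} gives ${}_{\CR}(\Psi(U),T)\cong{}_{\CA}(U,\Phi(T))$. Setting $U=\Phi(T')$, substituting $\Psi\Phi(T')\cong\bigoplus_{g}gT'$ from part~(1), and using that finitely presented functors are compact objects of $\MOD(\CR)$ (so Hom commutes with direct sums), I would extract ${}_{\CA}(\Phi(T),\Phi(T'))\cong\bigoplus_{g}{}_{\CR}(T,gT')$; the symmetric factorisation is analogous.

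For part (2), assume $T$ indecomposable and $G$ torsion-free, so $gT\ncong T$ for $g\ne 1$. By part (3), $\End(\Phi(T))\cong\bigoplus_{g\in G}{}_{\CR}(gT,T)$ with only finitely many nonzero summands. The $g=1$ summand is the local algebra $\End(T)$, and for $g\ne 1$ each ${}_{\CR}(gT,T)$ consists of morphisms between pairwise non-isomorphic indecomposables of $\CF(R)$, hence of non-invertible (radical) morphisms. I would verify that the composite $\End(\Phi(T))\to\End(T)\to\End(T)/\rad\End(T)\cong K$, given by projection onto the $g=1$-component followed by the residue map, is an algebra homomorphism whose kernel is a two-sided ideal of non-units, forcing $\End(\Phi(T))$ to be local and $\Phi(T)$ indecomposable by Proposition~\ref{p3}. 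The main obstacle, I expect, is precisely this local-ring check: the multiplication on $\bigoplus_{g}{}_{\CR}(gT,T)$ is transported from $\End(\Phi(T))$ through the bifunctorial isomorphism $\nu_{T,T}$ of part~(3), producing a $G$-graded convolution-style product rather than plain composition, so one must track how non-identity components compose and confirm that the candidate maximal ideal is genuinely quasi-regular; this is the only step that does not reduce to an adjunction or directly to Theorem~\ref{0t1}, and once it is in place the "consequently" clause is the combined content of (1)--(3).
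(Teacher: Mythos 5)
Your treatment of parts (1) and (3) follows essentially the same route as the paper: the cokernel formula of Theorem \ref{t2}, the identity $F_{\bullet}(F_{\lambda}(f))\cong\bigoplus_{g\in G}{}^{g}f$, the adjunctions $(\Phi,\Psi)$ and $(\Psi,\Theta)$ together with $\Phi|_{\CF(R)}\cong\Theta|_{\CF(R)}$, and the Krull--Schmidt theorem for the ``moreover'' clause of (1). The only cosmetic difference is that you justify commuting $\Hom$ past the direct sum $\bigoplus_{g}gT'$ by compactness of finitely presented objects, whereas the paper checks directly that only finitely many $g$ contribute; both work.

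Part (2) is where you diverge from the paper, and where there is a genuine gap. You reduce to showing that $E=\End_{\CA}(\Phi(T))\cong\bigoplus_{g\in G}{}_{\CR}(gT,T)$ is local, exhibit the two-sided ideal $I=\rad\End(T)\oplus\bigoplus_{g\neq 1}{}_{\CR}(gT,T)$ with $E/I\cong K$, and conclude that $E$ is local. That conclusion does not follow: a two-sided ideal with division-ring quotient makes a ring local only if the ideal is contained in the radical (take $E=K\times K$ and $I=0\times K$). So the decisive step is exactly the one you defer --- proving that $1+I$ consists of units, i.e.\ that $I$ is quasi-regular --- and it is not a routine check: the fact that each off-diagonal component consists of radical morphisms of $\CF(R)$ does not by itself prevent an element supported off the identity from being invertible in $E$ (in a group algebra $KG$ the element $g\neq 1$ is a unit with zero identity component). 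This is precisely where torsion-freeness must do real work beyond guaranteeing $gT\ncong T$, and your proposal stops short of it. The paper sidesteps the issue with an argument for which you already have every ingredient: if $\Phi(T)\cong U\oplus V$ with $U$ indecomposable, apply $\Psi$ and use $\Psi(\Phi(T))\cong\bigoplus_{g\in G}gT$ from part (1); Krull--Schmidt gives $\Psi(U)\cong\bigoplus_{h\in H}hT$ for some nonempty $H\subseteq G$, the invariance $g\Psi(U)=g(U\circ F_{\lambda})\cong U\circ F_{\lambda}$ forces $gH\subseteq H$ for all $g$ because $G$ acts freely on $\CF(R)$, hence $H=G$ and $V=0$. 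I recommend replacing the local-ring argument by this one, or else supplying an actual proof that $I\subseteq\rad E$.
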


\begin{proof}
(1) Assume $g\in G$ and $T=\Coker{}_{R}(*,f)$, for some $R$-homomorphism $f:M\ra N$. Then $gT\cong\Coker{}_{R}(*,{}^{g}f)$  and since $F_{\lambda}(f)\cong F_{\lambda}({}^{g}f)$, we obtain $$\Phi(gT)\cong\Coker{}_{A}(-,F_{\lambda}({}^{g}f))\cong\Coker{}_{A}(-,F_{\lambda}(f))\cong\Phi(T).$$ Since $F_{\bullet}(F_{\lambda}(f))\cong\bigoplus_{g\in G}{}^{g}f$, we obtain the following natural isomorphisms: $$\Psi(\Phi(T))=\Phi(T)\circ F_{\lambda}\cong\Coker_{R}(F_{\lambda}(*),F_{\lambda}(f))\cong\Coker_{R}(*,F_{\bullet}(F_{\lambda}(f)))\cong$$$$\cong
\Coker_{R}(*,\bigoplus_{g\in G}{}^{g}f)\cong\Coker_{R}(\bigoplus_{g\in G}(*,{}^{g}f))\cong\bigoplus_{g\in G}\Coker_{R}(*,{}^{g}f)\cong\bigoplus_{g\in G}gT.$$ To show the second part, assume that $T_{1},T_{2}\in\CF(R)$ are indecomposable functors and $\Phi(T_{1})\cong\Phi(T_{2})$. Then $\Psi(\Phi(T_{1}))\cong\Psi(\Phi(T_{2}))$ and hence $\bigoplus_{g\in G}gT_{1}\cong\bigoplus_{g\in G}gT_{2}$. Moreover, $T_{1}$ and $T_{2}$ have local endomorphism algebras and thus $gT_{1}$ and $gT_{2}$ as well, for any $g\in G$ (observe that $\End_{\CR}(T)\cong\End_{\CR}(gT)$, for any $T\in\CF(R)$). Hence the Krull-Schmidt theorem yields $T_{1}\cong gT_{2}$, for some $g\in G$.

(3) Assume that $G$ is torsion-free and $T\in\CF(R)$ is indecomposable. Assume that $\Phi(T)\cong U\oplus V$, for some functors $U,V\in\CF(A)$ such that $U$ is indecomposable. Then $\bigoplus_{g\in G}gT\cong\Psi(U)\oplus\Psi(V)$ and so the Krull-Schmidt theorem yields $\Psi(U)\cong\bigoplus_{h\in H}hT$, for some nonempty set $H\subseteq G$. Since $F_{\lambda}$ is $G$-invariant, we get $g\Psi(U)=g(U\circ F_{\lambda})\cong U\circ F_{\lambda}$ and thus $$g\Psi(U)\cong\bigoplus_{h\in H}(gh)T\cong\bigoplus_{h\in H}hT,$$ for any $g\in G$. Since $G$ acts freely on $\CF(R)$, we conclude that $gH\subseteq H$, for any $g\in G$ which implies that $H=G$. Thus $V=0$ and so $\Phi(T)\cong U$ is indecomposable.

(4) The first two natural isomorphisms follow from the fact that $(\Phi,\Psi)$ and $(\Psi,\Theta)$ are adjoint pairs and $\Phi|_{\CF(R)}\cong\Theta|_{\CF(R)}$. Then we obtain $${}_{\CA}(\Phi(T),\Phi(T'))\cong {}_{\CR}(T,\Psi(\Phi(T')))\cong{}_{\CR}(T,\bigoplus_{g\in G}{}gT').$$ Observe that there are only finitely many $g\in G$ such that ${}_{\CR}(T,gT')\neq 0.$ Indeed, there are epimorphisms ${}_{R}(*,N)\ra T$ and ${}_{R}(*,N)\ra T'$, because $T,T'$ are finitely generated. Hence ${}_{\CR}(T,gT')\neq 0$ yields $${}_{\CR}({}_{R}(*,N),{}_{R}(*,{}^{g}N'))\cong{}_{R}(N,{}^{g}N')\neq 0$$ and ${}_{R}(N,{}^{g}N')\neq 0$ holds only for finite number of $g\in G$. It is easy to see that ${}_{\CR}(T,gT')\cong{}_{\CR}(g^{-1}T,T')$, for any $g\in G$ and thus we obtain $${}_{\CR}(T,\bigoplus_{g\in G}{}gT')\cong\bigoplus_{g\in G}{}_{\CR}(T,gT')\cong\bigoplus_{g\in G}{}_{\CR}(gT,T').$$ \end{proof} 

We show in the proof of \cite[Theorem 4.3]{P6'} that the isomorphisms $$\bigoplus_{g\in G}{}_{\CR}(gT,T')\cong{}_{\CA}(\Phi(T),\Phi(T'))\cong\bigoplus_{g\in G}{}_{\CR}(T,gT')$$ are given by $$(\iota_{g})_{g\in G}\mapsto\sum_{g\in G}\Phi(\iota_{g})\mapsfrom(g^{-1}\iota_{g})_{g\in G}$$ where $\iota_{g}:gT_{1}\ra T_{2}$ is a homomorphism of functors, for any $g\in G$. Hence these isomorphisms are induced by $\Phi$. Recall that this requirement is necessary for $\Phi$ to be a Galois $G$-precovering. We omit a technical proof this fact.

The above theorem is a generalization of \cite[Theorem 5.5]{P4} where it is shown that the functor $\Phi:\CF(R)\ra\CF(A)$ is a Galois $G$-precovering under the assumption that $R$ is locally support-finite. In this case the push-down functor $F_{\lambda}:\mod(R)\ra\mod(A)$ is dense which was significant for the proof. 

The dense case is important, because then the functor $F_{\lambda}:\mod(R)\ra\mod(A)$ itself is a Galois covering of module categories, see Theorem \ref{0t1}. In this special situation, description of the functors $\Phi,\Theta:\MOD(\CR)\ra\MOD(\CA)$ is analogous to the classical push-down functors $F_{\lambda},F_{\rho}:\MOD(R)\ra\MOD(A)$. In particular, we have $$\Phi(T)(F_{\lambda}(M))\cong\bigoplus_{g\in G}T({}^{g}M)\textnormal{ and }\Theta(T)(F_{\lambda}(M))\cong\prod_{g\in G}T({}^{g}M),$$ for any $T\in\MOD(\CR)$ and $M\in\mod(R)$. We refer to \cite[Corollary 4.6]{P6'} for details of this description. Nevertheless, in order to give a flavor of these results, we recall below an exact description of $\Phi:\MOD(\CR)\ra\MOD(\CA)$:

\begin{itemize}
  \item Assume that $\alpha:F_{\lambda}(M)\ra F_{\lambda}(N)$ is an $A$-homomorphism such that $\alpha=\sum_{g\in G}F_{\lambda}(f_{g})$ where $f_{g}:{}^{g}M\ra N$, for any $g\in G$. Then $$\Phi(T)(\alpha):\bigoplus_{g\in G}T({}^{g}N)\ra\bigoplus_{g\in G}T({}^{g}M)$$ is defined by $T({}^{h}f_{h^{-1}g}):T({}^{h}N)\ra T({}^{g}M)$, for any $g,h\in G$.
  \item Assume that $T_{1},T_{2}\in\MOD(\CR)$, $\iota:T_{1}\ra T_{2}$ is a morphism of functors and let $M\in\mod(R)$. Then the homomorphism $$\Phi(\iota)_{F_{\lambda}(M)}:\bigoplus_{g\in G}T_{1}({}^{g}M)\ra\bigoplus_{g\in G}T_{2}({}^{g}M)$$ is defined by $\iota_{{}^{g}M}:T_{1}({}^{g}M)\ra T_{2}({}^{g}M)$, for any $g\in G$.
\end{itemize} These properties essentially follow from the fact that there is a natural equivalence of functors $\Psi(\Phi(\cdot))\cong\bigoplus_{g\in G}g(\cdot)$, see \cite[Theorem 4.5]{P6'} (we have also $\Psi(\Theta(\cdot))\cong\prod_{g\in G}g(\cdot)$).

The above description suggests an equivalent definition of $\Phi:\MOD(\CR)\ra\MOD(\CA)$ when $F_{\lambda}:\mod(R)\ra\mod(A)$ is dense. Namely, denote by $\Add(\mod(R))$ the full subcategory of $\MOD(R)$ whose objects are arbitrary direct sums of finite dimensional $R$-modules. Assume that $T\in\MOD(\CR)$ and let $\wh{T}:\Add(\mod(R))\ra\MOD(K)$ be the additive closure of $T$. The density of $F_{\lambda}$ ensures that $F_{\bullet}(\mod(A))\subseteq\Add(\mod(R))$, because $F_{\bullet}(F_{\lambda}(N))\cong\bigoplus_{g\in G}{}^{g}N$. Consequently, the functor $\wh{(\cdot)}\circ F_{\bullet}:\MOD(\CR)\ra\MOD(\CA)$ is well-defined and it is easy to see that $\Phi\cong\wh{(\cdot)}\circ F_{\bullet}$. Indeed, this follows from $F_{\bullet}F_{\lambda}\cong(\cdot)^{g}$ and the fact that for any $\alpha:F_{\lambda}(M)\ra F_{\lambda}(N)$ as above $F_{\bullet}(\alpha):\bigoplus_{g\in G}{}^{g}M\ra\bigoplus_{g\in G}{}^{g}N$ is defined by homomorphisms ${}^{h}f_{h^{-1}g}:{}^{g}M\ra {}^{h}N$, for any $g,h\in G$.

As a consequence of above considerations, we obtain the following fact.

\begin{thm}\label{t6} Assume that the push-down functor $F_{\lambda}:\mod(R)\ra\mod(A)$ is dense. Then $\Phi:\MOD(\CR)\ra\MOD(\CA)$ is a subfunctor of $\Theta:\MOD(\CR)\ra\MOD(\CA)$. Moreover, these functors restrict to categories of finitely presented functors and they coincide on $\CF(R)$. Consequently, both functors $\Phi,\Theta:\CF(R)\ra\CF(A)$ are exact.
\end{thm}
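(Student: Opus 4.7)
The plan is to exploit the explicit formulas for $\Phi$ and $\Theta$ available in the dense case, which are displayed immediately before the statement. By density of $F_{\lambda}$, every object $X\in\mod(A)$ is isomorphic to $F_{\lambda}(M)$ for some $M\in\mod(R)$, and then
\[
\Phi(T)(X)\cong\bigoplus_{g\in G}T({}^{g}M)\subseteq\prod_{g\in G}T({}^{g}M)\cong\Theta(T)(X).
\]
I would first define $\iota^{T}_{X}\colon\Phi(T)(X)\ra\Theta(T)(X)$ to be the canonical inclusion of the coproduct into the product, and then assemble these maps into a natural transformation $\iota\colon\Phi\ra\Theta$ of functors $\MOD(\CR)\ra\MOD(\CA)$.

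Naturality in the $A$-module variable is the main thing to verify. For an arbitrary $A$-homomorphism $\alpha\colon F_{\lambda}(M)\ra F_{\lambda}(N)$ written as $\alpha=\sum_{g\in G}F_{\lambda}(f_{g})$, the explicit description preceding the statement tells us that both $\Phi(T)(\alpha)$ and $\Theta(T)(\alpha)$ are given by the same family of matrix entries $T({}^{h}f_{h^{-1}g})\colon T({}^{h}N)\ra T({}^{g}M)$, just viewed as a morphism of coproducts in one case and of products in the other. Since the inclusion $\bigoplus\hookrightarrow\prod$ commutes with any such matrix of component maps, the required square commutes. Naturality in $T$ is analogous: a morphism $\mu\colon T_{1}\ra T_{2}$ induces $\Phi(\mu)_{F_{\lambda}(M)}$ and $\Theta(\mu)_{F_{\lambda}(M)}$ that both act componentwise by $\mu_{{}^{g}M}$, which is clearly compatible with the inclusion. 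This establishes that $\Phi$ is a subfunctor of $\Theta$.

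The restriction and coincidence statements then follow from Theorem \ref{t2}, but I would also give a direct argument based on the inclusion already set up. If $T=\Coker{}_{R}(\ast,f)$ for some $f\colon M'\ra M$ in $\mod(R)$, then $T({}^{g}N)$ is a quotient of ${}_{R}({}^{g}N,M)\cong{}_{R}(N,{}^{g^{-1}}M)$, which vanishes for all but finitely many $g\in G$ by the standard covering-theoretic finiteness observation recalled in Section 2.3. Hence at any finitely presented $T$ the coproduct and product above coincide, and $\iota^{T}$ is an isomorphism; this yields $\Phi|_{\CF(R)}=\Theta|_{\CF(R)}$ and, by Theorem \ref{t2}, values in $\CF(A)$. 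Exactness of the restricted functors is then immediate: $\Phi$ is right exact as a left adjoint, $\Theta$ is left exact as a right adjoint, and they agree on $\CF(R)$. The only delicate step is a bookkeeping one, namely checking that the explicit matrix formulas for $\Phi(T)(\alpha)$ and $\Theta(T)(\alpha)$ use the same indexing convention so that the elementary inclusion $\bigoplus\hookrightarrow\prod$ is genuinely natural; once this is confirmed, the rest is essentially formal.
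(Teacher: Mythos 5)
Your proposal is correct and follows essentially the same route as the paper: both arguments rest on the natural equivalences $\Psi\Phi\cong\bigoplus_{g\in G}g(\cdot)$ and $\Psi\Theta\cong\prod_{g\in G}g(\cdot)$ together with density, realize the subfunctor via the canonical inclusion of the coproduct into the product, use the finite-support observation for finitely presented $T$ to get $\Phi|_{\CF(R)}\cong\Theta|_{\CF(R)}$, and deduce exactness from the left/right adjoint properties. Your version merely spells out the naturality check and the finiteness of $\{g\in G\mid T({}^{g}M)\neq 0\}$ in more detail than the paper does.
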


\begin{proof} Observe that we have $$\Phi(F_{\lambda}(-))=\Phi(\cdot)\circ F_{\lambda}=\Psi(\Phi(\cdot))\cong\bigoplus_{g\in G}g(\cdot)$$ and moreover $$\Theta(F_{\lambda}(-))=\Theta(\cdot)\circ F_{\lambda}=\Psi(\Theta(\cdot))\cong\prod_{g\in G}g(\cdot),$$ hence $\Phi$ is a subfunctor of $\Theta$, because $\bigoplus_{g\in G}g(\cdot)$ is a subfunctor of $\prod_{g\in G}g(\cdot)$. If $T\in\CF(R)$ is finitely presented, then for any $M\in\mod(R)$ we have $T({}^{g}M)\neq 0$ only for finite number of $g\in G$. This yields $$\Phi(T)(F_{\lambda}(M))\cong\bigoplus_{g\in G}T({}^{g}M)\cong\prod_{g\in G}T({}^{g}M)\cong\Theta(T)(F_{\lambda}(M)),$$ so $\Phi(T)\cong\Theta(T)$ and thus $\Phi|_{\CF(R)}\cong\Theta|_{\CF(R)}$. Hence $\Phi$ and $\Theta$ are exact on $\CF(R)$.
\end{proof}

Summing up, we get an exact functor $\Phi:\CF(R)\ra\CF(A)$ such that $$\Phi(T)=\wh{T}\circ F_{\bullet}\cong\Coker{}_{A}(-,F_{\lambda}(f)),$$ for any $T=\Coker{}_{R}(-,f)\in\CF(R)$. We refer the reader to \cite[Remark 4.8]{P6'} for more detailed discussion of these properties.

\section{The functors of the first and the second kind} 

This section, based on \cite[Section 5]{P6'}, is devoted to motivating the introduction of the following terminology, originated in \cite{DoSk} for the case of modules, see Definition \ref{d1&2}. In the sequel the functor $\Phi:\CF(R)\ra\CF(A)$ is denoted by $\Phi_{F}$.

\begin{df}\label{d1} Assume that $F:R\ra A$ is a Galois $G$-covering with a torsion-free group $G$ and let $\Phi_{F}:\CF(R)\ra\CF(A)$ be the associated Galois $G$-precovering of functor categories. We call an indecomposable functor $U\in\CF(A)$ a \emph{functor of the first kind} if and only if $U$ lies in the image of $\Phi_{F}$. Otherwise, we call $U$ a \emph{functor of the second kind}.
\end{df}

Recall that categories of finitely presented functors are abelian Krull-Schmidt categories. This implies that the density of $\Phi_{F}:\CF(R)\ra\CF(A)$ is equivalent with the condition that any functor $U\in\CF(A)$ is of the first kind.

In order to motivate Definition \ref{d1} we show examples of Galois coverings $F:R\ra A$ for which the functor $\Phi:\CF(R)\ra\CF(A)$ is either dense or not. In these examples $R$ is a simply connected locally representation-finite locally bounded $K$-category. As we shall see, this situation simplifies considerations, because then the functor $\Phi:\CF(R)\ra\CF(A)$ becomes a Galois covering on the module level.

\subsection{The locally representation-finite case}

A locally bounded $K$-category $R$ is \emph{triangular} if and only if there is a presentation $R\cong\underline{KQ}\slash I$ with $Q$ triangular, i.e. $Q$ has no oriented cycles. Furthermore, $R$ is \textit{simply connected} \cite{AsSk3} provided that, for any presentation $R\cong\underline{KQ}\slash I$ as a bound quiver $K$-category: 
\begin{itemize}
  \item the quiver $Q$ is \textit{triangular},
  \item the fundamental group $\Pi_{1}(Q,I)$ is trivial, i.e. $\Pi_{1}(Q,I)=\{1\}$.
\end{itemize} The reader is referred to \cite{Ga} for the appropriate definitions. It follows from \cite{SkBC} that $R$ is simply connected if and only if $R$ is triangular and has no proper Galois coverings. If $R$ is locally representation finite, then $R$ is simply connected if and only if the fundamental group $\Pi_{1}(\Gamma_{R})$ of $\Gamma_{R}$ is trivial.

Assume that $R$ is a locally bounded $K$-category. The \textit{mesh-category} \cite{Rie} of the Auslander-Reiten quiver $\Gamma_{R}$ of $R$ is defined as $\underline{K(\Gamma_{R}^{\op})}\slash I$ where $I$ is the admissible ideal in the path category $\underline{K(\Gamma_{R}^{\op})}$ generated by all the mesh-relations appearing in $\Gamma_{R}^{\op}$. We say that $R$ is \textit{standard} if and only if there exists a $K$-linear equivalence of categories $\phi_{R}:\ind(R)\ra K(\Gamma_{R})$ such that $\phi_{R}(X)=[X]$, for any $X\in\ind(R)$. It follows from \cite{BrGa,BoGa} that $R$ is standard if and only if $R$ admits a simply connected Galois covering. In particular, if $R$ is simply connected, then $R$ is standard.

It is known that the Auslander-Reiten quiver $\Gamma_{R}$ of $R$ describes the factor category $\ind(R)\slash\rad^{\infty}_{R}$ where $\rad^{\infty}_{R}$ is the \emph{infinite radical} of the category $R$. This means that in general we only have a full and dense functor $\ind(R)\ra K(\Gamma_{R})$ such that $X\mapsto [X]$, for any $X\in\ind(R)$. If $R$ is locally representation-finite, then $\ind(R)$ is locally bounded and hence $\ind(R)$ has a presentation $\underline{KQ}\slash I$ as a bound quiver $K$-category. Then $Q=\Gamma_{R}^{\op}$ and $I$ contains all the mesh-relations. However $I$ may contain more relations, so in general $\ind(R)$ is not equivalent with the mesh category $K(\Gamma_{R})$. This holds if $R$ is standard and in the particular case when $R$ is simply connected.


Assume that $R$ is a locally representation-finite locally bounded $K$-category, $G$ an admissible torsion-free group of $K$-linear automorphisms of $R$ and $F:R\ra A$ is the Galois covering. It follows from Theorem \ref{t10} that the push-down functor $F_{\lambda}:\ind(R)\ra\ind(A)$ is a Galois covering of locally bounded $K$-categories. Therefore $F_\lambda$ induces the push-down functor $$(F_{\lambda})_{\lambda}:\mod(\ind(R))\ra\mod(\ind(A))$$ on the level of module categories. We briefly sketch arguments showing that $(F_{\lambda})_{\lambda}\cong\Phi_{F}$ in this case, see \cite[Proposition 5.4]{P6'} for details.

First observe that the functor $\eta_{R}:\mod(\ind(R))\ra\CF(R)$ of \emph{additive closure} is an exact equivalence of categories. Clearly, if $T\in\mod(\ind(R))$, then $\eta_{R}(T):\mod(R)\ra\mod(K)$ is defined as the unique contravariant $K$-linear functor such that $\eta_{R}(T)(X)=T(X)$, for any $X\in\ind(R)$. Observe that $\eta_{R}(\ind(R)(-,M))={}_{R}(-,M)$, for any $M\in\mod(R)$, and hence $\eta_{R}$ preserves projective objects. It is easy to see that $\eta_{R}$ is exact, so it preserves projective presentations and thus $\Im(\eta_{R})\subseteq\CF(R)$. The quasi-inverse of this functor sends a functor $T\in\CF(R)$ to the restriction $T|_{\ind(R)}$.

Assume that $T\in\mod(\ind(R))$. Then the minimal projective presentation $p$ of $T$ in $\mod(\ind(R))$ has the form $$p:\quad\ind(R)(-,M)\ra\ind(R)(-,N)\ra T\ra 0,$$ for some $M,N\in\mod(R)$. Since $(F_{\lambda})_{\lambda}$ is exact and preserves projectivity, we get that $$(F_{\lambda})_{\lambda}(p):\quad\ind(A)(-,F_{\lambda}(M))\ra\ind(A)(-,F_{\lambda}(N))\ra (F_{\lambda})_{\lambda}(T)\ra 0$$ is a projective presentation of the $\ind(A)$-module $(F_{\lambda})_{\lambda}(T)$ in $\mod(\ind(A))$. Applying the functors $\eta_{R}$ and $\eta_{A}$ to $p$ and $(F_{\lambda})_{\lambda}(p)$, respectively, we obtain the following two projective presentations: $$\eta_{R}(p):\quad{}_{R}(-,M)\ra{}_{R}(-,N)\ra\eta_{R}(T)\ra 0,$$ $$\eta_{A}((F_{\lambda})_{\lambda}(p)):\quad{}_{A}(-,F_{\lambda}(M))\ra{}_{A}(-,F_{\lambda}(N))\ra\eta_{A}((F_{\lambda})_{\lambda}(T))\ra 0$$ which shows that $\Phi_{F}(\eta_{R}(T))=\eta_{A}((F_{\lambda})_{\lambda}(T))$. Summing up, we obtain the following commutative diagram $$\xymatrix{\mod(\ind(R))\ar[rr]^{(F_{\lambda})_{\lambda}}\ar[d]^{\eta_{R}}&&{}\mod(\ind(A))\ar[d]^{\eta_{A}}\\\CF(R)\ar[rr]^{\Phi_{F}}&& \CF(A)}$$ and conclude that $(F_{\lambda})_{\lambda}\cong\Phi_{F}$. In case $R$ is additionally simply connected, our description of $\Phi_{F}$ gets even more handy. Indeed, when $R$ is simply connected, both $R$ and $A$ are standard and there are $K$-linear equivalences $\phi_{R}:\ind(R)\ra K(\Gamma_{R})$ and $\phi_{A}:\ind(R)\ra K(\Gamma_{A})$, sending indecomposable modules to the associated vertices in the Auslander-Reiten quivers. The push-down functor $F_{\lambda}:\ind(R)\ra\ind(A)$ induces a Galois covering $\Gamma_{R}\ra\Gamma_{A}$ of translation quivers such that $[X]\mapsto[F_{\lambda}(X)]$, for any $X\in\ind(R)$ \ref{t10} and hence a Galois covering $\Gamma_{R}^{\op}\ra\Gamma_{A}^{\op}$ of the opposite quivers. In turn, this induces a Galois covering $K(\Gamma_{R})\ra K(\Gamma_{A})$ of the associated mesh-categories which we denote $F_{\lambda}^{\Gamma}:K(\Gamma_{R})\ra K(\Gamma_{A})$. Since $F_{\lambda}^{\Gamma}([X])=[F_{\lambda}(X)]$, we get the commutative diagram of the form: $$\xymatrix{\ind(R)\ar[rr]^{F_{\lambda}}\ar[d]^{\phi_{R}}&& \ind(A)\ar[d]^{\phi_{A}}\\K(\Gamma_{R})\ar[rr]^{F_{\lambda}^{\Gamma}}&&{}K(\Gamma_{A}.)}$$ 

We summarize the above observations in the following result. We refer to \cite[Theorem 5.6]{P6'} for the details.

\begin{thm}\label{t11} Assume that $R$ is a locally representation-finite simply connected locally bounded $K$-category, $G$ an admissible torsion-free group of $K$-linear automorphisms of $R$ and $F:R\ra A$ is the Galois covering. Then the following diagram $$\xymatrix{\mod(K(\Gamma_{R}))\ar[rr]^{(F_{\lambda}^{\Gamma})_{\lambda}}\ar[d]^{\widetilde{\phi}_{R}}&&\mod(K(\Gamma_{A}))\ar[d]^{\widetilde{\phi}_{A}}\\\mod(\ind(R))\ar[rr]^{(F_{\lambda})_\lambda}\ar[d]^{\eta_R}&&\mod(\ind(A))\ar[d]^{\eta_{A}}\\\CF(R)\ar[rr]^{\Phi_{F}}&& \CF(A),}$$ where $\widetilde{\phi}_{R}=(-)\circ\phi_{R}$ and $\widetilde{\phi}_{A}=(-)\circ\phi_{A}$, is a commutative diagram whose columns are equivalences.
\end{thm}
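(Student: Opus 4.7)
The plan is to decompose the three-row diagram into its two squares and treat each separately, using the structural facts sketched in the preceding paragraphs. The columns being equivalences is essentially automatic: $\eta_R,\eta_A$ are the additive-closure equivalences $\mod(\ind(R))\simeq\CF(R)$ and $\mod(\ind(A))\simeq\CF(A)$ recalled right before the statement, while $\widetilde{\phi}_R=(-)\circ\phi_R$ and $\widetilde{\phi}_A=(-)\circ\phi_A$ are pull-ups along $K$-linear equivalences of small categories and hence are themselves equivalences. So the only genuine content is the commutativity of the two squares.

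For the bottom square I would simply re-run the argument already given in the discussion preceding Theorem \ref{t11}. Take $T\in\mod(\ind(R))$ and a minimal projective presentation $\ind(R)(-,M)\ra\ind(R)(-,N)\ra T\ra 0$. Since $(F_{\lambda})_{\lambda}$ is exact and sends $\ind(R)(-,M)$ to $\ind(A)(-,F_{\lambda}(M))$ by definition of the push-down along $F_{\lambda}:\ind(R)\ra\ind(A)$ (which is a Galois covering of locally bounded $K$-categories by Theorem \ref{t10}(2)), applying $\eta_A$ yields the projective presentation ${}_A(-,F_\lambda(M))\ra{}_A(-,F_\lambda(N))\ra\eta_A((F_\lambda)_\lambda(T))\ra 0$. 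On the other route, applying $\eta_R$ gives a projective presentation of $\eta_R(T)$ by ${}_R(-,M)\ra{}_R(-,N)$, and Theorem \ref{t2} then sends this to the presentation of $\Phi_F(\eta_R(T))$ by ${}_A(-,F_\lambda(M))\ra{}_A(-,F_\lambda(N))$. The two presentations agree, so $\Phi_F\circ\eta_R\cong\eta_A\circ(F_\lambda)_\lambda$ naturally.

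For the top square I would exploit the fact that $\phi_R$ and $\phi_A$ are equivalences, so $(\phi_R)_\bullet=\widetilde{\phi}_R$ and $(\phi_A)_\bullet=\widetilde{\phi}_A$ are equivalences with quasi-inverses $(\phi_R)_\lambda$ and $(\phi_A)_\lambda$ respectively. Since pull-up composes contravariantly in the underlying functor, push-down composes covariantly: for composable $K$-linear functors $G_1,G_2$ one has $(G_1G_2)_\lambda\cong(G_1)_\lambda\circ(G_2)_\lambda$. Applying this to the already established identity $F_\lambda^\Gamma\circ\phi_R=\phi_A\circ F_\lambda$ gives a natural isomorphism
$$(F_\lambda^\Gamma)_\lambda\circ(\phi_R)_\lambda\cong(\phi_A)_\lambda\circ(F_\lambda)_\lambda.$$
Composing on the right with $\widetilde{\phi}_R$ and on the left with $\widetilde{\phi}_A$, and using that $(\phi_R)_\lambda$ and $(\phi_A)_\lambda$ are quasi-inverse to $\widetilde{\phi}_R$ and $\widetilde{\phi}_A$, yields $\widetilde{\phi}_A\circ(F_\lambda^\Gamma)_\lambda\cong(F_\lambda)_\lambda\circ\widetilde{\phi}_R$, which is the desired commutativity. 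A more concrete verification that I would keep in reserve is to check on the generating projectives $K(\Gamma_R)(-,\phi_R(X))$, both of whose images are canonically $\ind(A)(-,F_\lambda(X))$, and extend by right-exactness; this avoids any appeal to abstract adjoint-composition lemmas.

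The only point I expect to require care is the functoriality statement $(G_1G_2)_\lambda\cong(G_1)_\lambda(G_2)_\lambda$ together with the quasi-inverse property of push-down for equivalences; these are standard for left adjoints of pull-ups between module categories over small $K$-categories, but they must be invoked in the form used by this paper, where push-downs are also endowed with Galois group data. Once one observes that $\phi_R$ and $\phi_A$ can be viewed as trivial Galois coverings (with trivial group), both properties specialize correctly, and everything else reduces to a diagram-chase on projective presentations together with the naturality already embedded in the preceding theorems.
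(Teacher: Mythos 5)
Your proposal is correct and follows essentially the same route as the paper: the bottom square is exactly the projective-presentation argument from the discussion preceding the theorem, the columns are equivalences for the reasons you give, and the top square is the paper's one-line observation that an equivalence of Galois coverings (the already-established identity $\phi_A\circ F_\lambda=F_\lambda^\Gamma\circ\phi_R$) induces an equivalence of the associated push-down functors, which you merely make explicit via $(G_1G_2)_\lambda\cong(G_1)_\lambda\circ(G_2)_\lambda$ and the quasi-inverse property. No gaps.
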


\begin{proof} Clearly $\widetilde{\phi}_{R}=(-)\circ\phi_{R}$ and $\widetilde{\phi}_{A}=(-)\circ\phi_{A}$ are $K$-linear equivalences on the module level. Moreover, any equivalence of Galois coverings induces an equivalence of the associated push-down functors, and hence the following diagram $$\xymatrix{\mod(K(\Gamma_{R}))\ar[rr]^{(F_{\lambda}^{\Gamma})_{\lambda}}\ar[d]^{\widetilde{\phi}_{R}}&&\mod(K(\Gamma_{A}))\ar[d]^{\widetilde{\phi}_{A}}\\\mod(\ind(R))\ar[rr]^{(F_{\lambda})_{\lambda}}&&\mod(\ind(A))}$$ is commutative.
\end{proof} 

\subsection{Examples}

Theorem \ref{t11} can be applied to justify Definition \ref{d1}. Indeed, we conclude that the functor $\Phi_{F}$ is dense if and only if the functor $(F_{\lambda}^{\Gamma})_{\lambda}$ is dense. The latter can be verified by classical techniques of covering theory. We use this idea in Examples \ref{e1} and \ref{e2} (see Examples 5.7 and 5.8 in \cite{P6'}, respectively). The first one gives a Galois covering $F:R\ra A$ for which $\Phi_{F}$ is not dense. In the second one it is the opposite. We are grateful to Stanisław Kasjan for drawing our attention to the first example.

\begin{exa}\label{e1} Assume that $R=\underline{KQ_{R}}\slash I_{R}$ where $Q_{R}$ is the following quiver: $$\xymatrix@C=1em@R=1em{&&&1\ar[dr]^{\beta}\ar[dl]_{\alpha}&&&&1\ar[dr]^{\beta}\ar[dl]_{\alpha}\\\dots\ar[dr]^{\gamma}&&2\ar[dl]_{\delta}&&3\ar[dr]^{\gamma}&&2\ar[dl]_{\delta}&&3\ar[dr]^{\gamma}&&\dots\ar[dl]_{\delta}\\&4&&&&4&&&&4}$$ and $I_{R}$ is generated by all zero relations of the form $\alpha\delta=0$ and $\beta\gamma=0$. The group $G=\mathbb{Z}$ acts on $R$ by the horizontal translation. Hence $R\slash G$ is the bound-quiver $K$-algebra $A=KQ_{A}\slash I_{A}$ where $Q_{A}$ is the quiver: $$\xymatrix@C=1em@R=1em{&1\ar[dl]_{\alpha}\ar[dr]^{\beta}\\2\ar[dr]_{\delta}&&3\ar[dl]^{\gamma}\\&4}$$ and $I_{A}=\langle\alpha\delta,\beta\gamma\rangle$. We obtain a Galois covering functor $F:R\ra A$ with the covering group $G=\mathbb{Z}$. Observe that $Q_R$ is a quiver of type $\mathbb{A}^{\infty}_{\infty}$, so
$R$ is locally representation-finite simply connected. Hence $A$ is standard, but not simply connected since $\Pi_{1}(Q,I)=\ZZ$. 

Below we present the Auslander-Reiten quivers $\Gamma_{R},\Gamma_{A}$, one next to the other. Dotted lines on the quivers represent the existence of the Auslander-Reiten sequences. We see that $\Gamma_{R}$ contains a convex line $L$ whose vertices are marked as $\bullet$ and arrows are labelled by the letters $a,b,c,d$. We denote by $D$ the convex subquiver of $\Gamma_{A}$ of the type $\widetilde{\mathbb{A}}_{3}$ whose vertices and arrows are similarly marked on $\Gamma_{A}$.

$$\xymatrix@C=1em@R=1em{&&&&\vdots\\&\cdot\ar[dr]\ar@{--}[rr]&&\bullet\ar[ur]^{b}\\\cdot\ar[ur]\ar[dr]\ar@{--}[rr]&&\bullet\ar[dr]_{c}\ar[ur]^{a}\\&\cdot\ar[ur]\ar@{--}[rr]&&\bullet\ar[dr]_{d}\ar@{--}[rr]&&\cdot\ar[dr]\\&&&&\bullet\ar[ur]\ar[dr]\ar@{--}[rr]&&\cdot\\&\cdot\ar[dr]\ar@{--}[rr]&&\bullet\ar[ur]^{b}\ar@{--}[rr]&&\cdot\ar[ur]\\\cdot\ar[ur]\ar[dr]\ar@{--}[rr]&&\bullet\ar[dr]_{c}\ar[ur]^{a}\\&\cdot\ar[ur]\ar@{--}[rr]&&\bullet\ar[dr]_{d}\ar@{--}[rr]&&\cdot\ar[dr]\\&&&&\bullet\ar[ur]\ar[dr]\ar@{--}[rr]&&\cdot\\&\cdot\ar[dr]\ar@{--}[rr]&&\bullet\ar[ur]^{b}\ar@{--}[rr]&&\cdot\ar[ur]\\\cdot\ar[ur]\ar[dr]\ar@{--}[rr]&&\bullet\ar[dr]_{c}\ar[ur]^{a}\\&\cdot\ar[ur]\ar@{--}[rr]&&\bullet\ar[dr]_{d}\\&&&&\vdots} \xymatrix@C=1em@R=1em{\\\\\\\\\\\\\\&\cdot\ar[dr]\ar@{--}[rr]&&\bullet\ar[dr]^{b}\ar@{--}[rr]&&\cdot\ar[dr]\\\cdot\ar[ur]\ar[dr]\ar@{--}[rr]&&\bullet\ar[ur]^{a}\ar[dr]_{c}&&\bullet\ar[ur]\ar@{--}[rr]\ar[dr]&&\cdot\\&\cdot\ar[ur]\ar@{--}[rr]&&\bullet\ar[ur]_{d}\ar@{--}[rr]&&\cdot\ar[ur]}$$

We view $\Gamma_{R}$ and $\Gamma_{A}$ as bound quiver $K$-categories, bounded by all mesh-relations. This means that we identify $\Gamma_{R}$ with $K(\Gamma_{R}^{\op})=K(\Gamma_{R})^{\op}$ and $\Gamma_{A}$ with $K(\Gamma_{A}^{\op})=K(\Gamma_{A})^{\op}$. The group $G=\mathbb{Z}$ acts on $\Gamma_{R}$ by vertical translation, that is, all meshes lying on each of two vertical lines (on both sides of the line $L$) belong to the same orbit. We conclude that $\Gamma_{R}\slash G\cong\Gamma_{A}$. Denote the Galois covering $\Gamma_{R}\ra\Gamma_{A}$ by $\widetilde{F}$ and observe that $\widetilde{F}$ is equivalent with $(F_{\lambda}^{\Gamma})^{\op}$. Hence $\widetilde{F}^{\op}\cong F_{\lambda}^{\Gamma}$ and Theorem \ref{t11} yields $$(\widetilde{F}_{\lambda})^{\op}\cong(\widetilde{F}^{\op})_\lambda\cong (F_{\lambda}^{\Gamma})_{\lambda}\cong\Phi_F.$$ We show that the functor $\widetilde{F}_{\lambda}$ is not dense. Denote by $M(L)$ the linear $\Gamma_{R}$-module associated with the line $L$ and by $M(D)$ the $\Gamma_{A}$-module having $K$ in the vertices and identities on the arrows such that $\supp(M(D))=D$. Then $\widetilde{F}_{\bullet}(M(D))=M(L)$ and thus $\widetilde{F}_{\bullet}(M(D))$ is an indecomposable module of infinite dimension. This implies that the module $M(D)$ is of the second kind, because otherwise $\widetilde{F}_{\bullet}(M(D))$ would be of the form $\bigoplus_{g\in G}{}^{g}L$, for some indecomposable $L\in\mod(R)$. 

The above arguments show that the functor $\Phi_{F}:\CF(R)\ra\CF(A)$ is not dense. We emphasize that more detailed calculations given in \cite[Example 5.7]{P6'} allow to construct a one parameter family of functors of the second kind in the functor category $\CF(A)$. \epv
\end{exa}

\begin{exa}\label{e2} Assume that $R=\underline{KQ_{R}}\slash I_{R}$ where $Q_{R}$ is the following quiver: $$\xymatrix@C=0.8em@R=1em{&&&&1\ar[dr]^{\alpha}\ar[dddlll]_{\delta}&&&&&&1\ar[dr]^{\alpha}\ar[dddlll]_{\delta}&&&&&&1\ar[dr]^{\alpha}\ar[dddlll]_{\delta}&\\&&&&& 2\ar[dr]^{\beta}&&&&&& 2\ar[dr]^{\beta} &&&&&& 2\ar[dr]^{\beta}&&&\\&&&&&& 3\ar[dr]^{\gamma} &&&&&& 3\ar[dr]^{\gamma} &&&&&& 3\ar[dr]^{\gamma}&&\\\cdots &4 &&&&&& 4 &&&&&& 4 &&&&&& 4& \cdots}$$ and $I_{R}$ is generated by all zero relations of the form $\alpha\beta=\beta\gamma=0$. The group $G=\mathbb{Z}$ acts on $R$ by the horizontal translation and hence $R\slash G$ is the bound-quiver $K$-algebra $A=KQ_{A}\slash I_{A}$ where $Q_{A}$ is the quiver: $$\xymatrix@C=1em@R=1em{1\ar[dd]_{\delta}\ar[rr]^{\alpha}&& 2\ar[dd]^{\beta}\\&&\\3&&4\ar[ll]^{\gamma}}$$ and $I_{A}=\langle\alpha\beta,\beta\gamma\rangle$. We obtain a Galois covering functor $F:R\ra A$ with the covering group $G=\mathbb{Z}$ where $R$ is locally representation-finite simply connected and thus $A$ is standard. The Auslander-Reiten quivers $\Gamma_{R}$ and $\Gamma_{A}\cong\Gamma_{R}\slash G$ have the following shapes, respectively: $$\xymatrix@C=1em@R=0.95em{\\ \\ \\ \\  \dots\\} \xymatrix@C=1em@R=1em{&\cdot\ar[dr]\ar@{--}[rr]&&\cdot\ar[dr] &&&& \cdot\ar[dr]\ar@{--}[rr]&&\cdot\ar[dr]\\
 \cdot\ar[ur]\ar[dr]\ar@{--}[rr]&&\cdot\ar[ur]\ar[dr]\ar@{--}[rr]&&\cdot &&  \cdot\ar[ur]\ar[dr]\ar@{--}[rr]&&\cdot\ar[ur]\ar[dr]\ar@{--}[rr]&&\cdot\\&\cdot\ar[ur]\ar[dr]\ar@{--}[rr]&&\cdot\ar[ur]\ar[dr] &&&& \cdot\ar[ur]\ar[dr]\ar@{--}[rr]&&\cdot\ar[ur]\ar[dr]& && & \\
 \cdot\ar[ur]\ar@{--}[rr]&&\cdot\ar[ur]\ar@{--}[rr]&&\cdot\ar[dr]_{\mu}\ar@{--}[rr]&&\cdot\ar[ur]\ar@{--}[rr]&&\cdot\ar[ur]\ar@{--}[rr]&&\cdot\ar[dr]_{\mu}\ar@{--}[rr]&&\cdot \dots\\ &&&&&\cdot\ar[ur]_{\nu}&& &&&& \cdot \ar[ur]_{\nu}&}\xymatrix@C=1em@R=1em{&\cdot\ar[dr]\ar@{--}[rr]&&\cdot\ar[dr]\\\cdot\ar[ur]\ar[dr]\ar@{--}[rr]&&\cdot\ar[ur]\ar[dr]\ar@{--}[rr]&&\cdot\\&\cdot\ar[ur]\ar[dr]\ar@{--}[rr]&&\cdot\ar[ur]\ar[dr]\\\cdot\ar[ur]\ar@{--}[rr]&&\cdot\ar[ur]\ar@{--}[rr]&&\cdot\ar@/^/[dll]^{\tiny{\mu}}\\& & \cdot\ar@/^/[ull]^{\tiny{\nu}} & &}$$ Observe that $B=K(\Gamma_{R}^{\op})$ is locally support-finite, because we always have $\nu\mu=0$ and so the support of any indecomposable finite dimensional $B$-module is contained in a full subcategory $C$ of $B$ of the form: $$\xymatrix@C=1em@R=1em{&&\cdot\ar[dr]\ar@{--}[rr]&&\cdot\ar[dr]\\&\cdot\ar[ur]\ar[dr]\ar@{--}[rr]&&\cdot\ar[ur]\ar[dr]\ar@{--}[rr]&&\cdot\\&&\cdot\ar[ur]\ar[dr]\ar@{--}[rr]&&\cdot\ar[ur]\ar[dr]\\&\cdot\ar[ur]\ar@{--}[rr]&&\cdot\ar[ur]\ar@{--}[rr]&&\cdot\ar[dr]\\ \cdot\ar[ur]&&&&&&\cdot}$$ Denoting the Galois covering $\Gamma_{R}\ra\Gamma_{A}$ by $\widetilde{F}$, we conclude from \cite{DoSk,DoLeSk} that the functor $\widetilde{F}_{\lambda}$ is dense. As in the previous example, we get that the functor $\Phi_{F}:\CF(R)\ra\CF(A)$ is also dense. 
 
The standard knitting procedure of the Auslander-Reiten quiver shows that $B=K(\Gamma_{R}^{\op})$ is even locally representation-finite. It is worth to note that this fact follows directly from beautiful classical results of \cite{IT,IPTZ} where the authors give sufficient criteria for Auslander algebras to be representation finite. In \cite[Example 5.9]{P6'} we show a Galois covering functor $F:R\ra A$ such that $\Phi_{F}:\CF(R)\ra\CF(A)$ is dense but the category $K(\Gamma_{R}^{\op})$ is not locally representation-finite. \epv
\end{exa}

We refer the reader to \cite[Remark 5.6]{P6'} where we discuss possible generalizations of Theorem \ref{t10} to the representation-infinite case, at least to some extent. These could be used to showing examples of functors $\Phi_{F}:\CF(R)\ra\CF(A)$ which are not dense (similarly as in Example \ref{e1}), but is not sufficient for showing the density. An interesting open problem is to give a criterion for the density of the functor $\Phi_{F}:\CF(R)\ra\CF(A)$, in the spirit of \cite{DoSk,DoLeSk}. This boils down to appropriate generalization of the definition of a locally support-finite locally bounded $K$-category. We believe that the following property is worth studying in this context: \begin{center}\emph{For any module $X\in\ind(R)$, the union of supports of all indecomposable functors $T\in\CF(R)$ such that $T(X)\neq 0$ is contained in the support of a hom-functor.}\end{center} We recall that the support of $T\in\CF(R)$ is defined as the class of all modules $X\in\ind(R)$ such that $T(X)\neq 0$.

\section{Applications to Krull-Gabriel dimension}
In the final section of the paper we show applications of covering techniques described above in the theory of Krull-Gabriel dimension. We start with introductory section where we recall all necessary facts. In particular, we formulate the main results of \cite{P4,P6,P6'} which show relations between $\KG(R)$ and $\KG(A)$ where $R\ra A$ is a Galois covering. Then we outline applications to special classes of algebras. We base the section mostly on \cite{P4,J-PP1} and \cite[Section 6]{P6'}.

\subsection{Basic facts and notions}
We recall the definition of the Krull-Gabriel dimension of a locally bounded $K$-category. The general definition is similar, see for example Section 4 of \cite{P4}. Assume that $R$ is a locally bounded $K$-category. The associated \textit{Krull-Gabriel filtration} $(\CF(R)_{\alpha})_{\alpha}$ \cite{Po} is the filtration $$\CF(R)_{-1}\subseteq\CF(R)_{0}\subseteq\CF(A)_{R}\subseteq\hdots\subseteq\CF(R)_{\alpha}\subseteq\CF(R)_{\alpha+1}\subseteq\hdots$$ of $\CF(R)$ by Serre subcategories, defined recursively as follows: 
\begin{enumerate}[\rm(1)]
	\item $\CF(R)_{-1}=0$,
	\item $\CF(R)_{\alpha+1}$ is the Serre subcategory of $\CF(R)$ formed by all functors having finite length in the quotient category $\CF(R)\slash\CF(R)_{\alpha}$, for any ordinal number $\alpha$,
	\item $\CF(R)_{\beta}=\bigcup_{\alpha<\beta}\CF(R)_{\alpha}$, for any limit ordinal $\beta$.
\end{enumerate} Following \cite{Ge2,Geigle1985}, the \textit{Krull-Gabriel dimension} $\KG(R)$ of $R$ is defined as the smallest ordinal number $\alpha$ such that $\CF(R)_{\alpha}=\CF(R)$, if such a number exists. We set $\KG(R)=\infty$ if this is not the case. If $\KG(R)\in\NN$, then the Krull-Gabriel dimension of $R$ is \textit{finite}. If $\KG(R)=\infty$, then the Krull-Gabriel dimension of $R$ is \textit{undefined}.

Our main motivation to study Krull-Gabriel dimension comes from the following conjecture due to M. Prest \cite{Pr2}.

\begin{con}\label{00c1} Assume that $K$ is an algebraically closed field. A finite dimensional $K$-algebra $A$ is of domestic representation type if and only if the Krull-Gabriel dimension $\KG(A)$ of $A$ is finite.
\end{con}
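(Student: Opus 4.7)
The plan is to approach Prest's conjecture through the covering-theoretic reductions developed in Sections 3 and 4, combined with a case analysis driven by the classification of tame algebras. The two anchors I would rely on are Auslander's theorem ($\KG(A)=0$ iff $A$ is representation-finite), which handles the trivial end of the filtration, and the statement (invoked after Theorem \ref{t4}) that a Galois covering $F\colon R\ra A$ does not increase Krull-Gabriel dimension, so $\KG(R)\geq\KG(A)$. This lets me lift $A$ to better-behaved covers --- repetitive categories $\widehat{C}$, cluster repetitive categories $\check{C}$, or the universal cover of a strongly simply connected form of $A$ --- and perform the main work on the cover, where the representation theory is more transparent.

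For the direction $\KG(A)<\infty\Rightarrow A$ domestic I would proceed by contrapositive. If $A$ is wild, classical results of Prest and Geigle already force $\KG(A)=\infty$, so only the non-domestic tame case matters. My attack would be to construct, for each non-domestic tame $A$, an infinite descending chain of simple objects in successive Serre quotients $\CF(A)_{\alpha+1}/\CF(A)_{\alpha}$ produced from one-parameter families of string, band, or tubular modules indexed by $\mathbb{P}^{1}(K)$: each generic point of such a family should contribute a simple object that survives to arbitrarily high layer, forcing $\KG(A)=\infty$.

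For the direction $A$ domestic $\Rightarrow\KG(A)<\infty$ I would split into the representative classes singled out in Example \ref{e44}: self-injective algebras of domestic type (reducing to $\widehat{C}$ with $C$ tilted of Dynkin or Euclidean type), cluster-tilted algebras of domestic type (reducing to $\check{C}$), and more generally strongly simply connected domestic algebras. The strategy in each case is: lift to the repetitive, cluster-repetitive, or universal Galois cover $R$; apply Theorem \ref{t4} to transport finitely presented functors from $\CF(R)$ to $\CF(A)$; compute the Krull-Gabriel filtration of $\CF(R)$ by tracking how Auslander-Reiten sequences, tubes, and preprojective/preinjective components feed simple objects into consecutive Serre layers; and conclude with an explicit finite bound on $\KG(R)$, from which finiteness of $\KG(A)$ follows by the inequality above.

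The main obstacle is the direction $\KG(A)<\infty\Rightarrow A$ domestic, and specifically the non-domestic tame case: producing enough simple functors in arbitrarily deep Serre quotients demands a fine understanding of pp-definable subgroups of generic modules, and the available techniques only cover restricted subclasses (strings, some tubular and selfinjective families). A secondary obstacle is that the $\Rightarrow$ direction really needs a reverse-type inequality bounding $\KG(R)$ by $\KG(A)$ up to a controlled error; this is only available under density hypotheses such as locally support-finiteness, so a preparatory sub-goal will be to formulate and prove a sharper density criterion for $\Phi_{F}\colon\CF(R)\ra\CF(A)$ (in the spirit of the open problem raised at the end of Section 4) that makes the chosen cover $R$ tractable enough to transfer finite Krull-Gabriel dimension back down to $A$.
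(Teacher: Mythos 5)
The statement you are trying to prove is Conjecture \ref{00c1}, Prest's conjecture: the paper does not prove it, states explicitly that it is open, and only establishes special cases (Theorems \ref{s3}, \ref{s4}, \ref{s7}, and one direction in Theorem \ref{t13}). So there is no proof in the paper to compare against, and your text is a research programme rather than a proof; the obstacles you list at the end are not loose ends but the actual open content of the problem. Two of them deserve to be named more sharply. First, your case analysis for ``$\KG(A)<\infty\Rightarrow A$ domestic'' presupposes a classification of non-domestic tame algebras over which one could range; no such classification exists, and constructing simple objects in arbitrarily deep Serre quotients for an \emph{arbitrary} non-domestic tame algebra is exactly what nobody knows how to do (the known cases are cycle-finite, tubular, string, pg-critical, and similar restricted classes). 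Even your dismissal of the wild case is too quick: the results of Prest, Geigle, and \cite{P5,GP} apply to strictly or controlled wild algebras, and it is not known that every wild algebra is of this kind. Second, for ``domestic $\Rightarrow\KG(A)<\infty$'' the reduction to repetitive, cluster-repetitive, and strongly simply connected covers does not exhaust the domestic algebras, so even a complete treatment of those three families would not close that direction.

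There is also a concrete sign error that undermines the architecture of your plan. Theorem \ref{t3} states $\KG(R)\leq\KG(A)$ for a Galois covering $F:R\ra A$ --- the \emph{covering} category has dimension at most that of the base --- whereas you assert $\KG(R)\geq\KG(A)$. The consequence is that ``lift to a nice cover $R$, compute $\KG(R)<\infty$ there, and push the bound down to $A$'' does not work from Theorem \ref{t3} alone: an upper bound on $\KG(R)$ says nothing about $\KG(A)$. The paper only gets the equality $\KG(R)=\KG(A)$ under the hypotheses of Theorem \ref{t8} (locally support-finite, intervally finite, torsion-free group), or via the admissible-functor arguments of Theorem \ref{s5}; outside those hypotheses the inequality you need for the finiteness direction is unavailable, and the paper even remarks that $\KG(A)$ may genuinely exceed $\KG(R)$ in general. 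Conversely, the unconditional inequality of Theorem \ref{t3} is useful precisely in the direction you treat as secondary: if $\KG(A)$ is finite then so is $\KG(R)$, so obstructions (hypercritical, pg-critical, tubular convex subcategories, infinitely many periodic lines) can be detected upstairs --- this is how Theorem \ref{t13} is actually proved. Reorienting your plan around the correct inequality would align it with what the paper can do, but would still leave the conjecture itself open.
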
 

Observe that this conjecture fits naturally into the \emph{functorial approach} to representation theory of algebras, a line of research initiated by M. Auslander in \cite{Au,Au0}. Indeed, in \cite[Corollary 3.14]{Au} Auslander proved that representation-finite algebras are exactly those algebras for which all finitely presented functors are of finite length. In other words, if $A$ is an algebra, then $\KG(A)=0$ if and only if $A$ is of finite type. We refer to the Introduction of \cite{P6'} for the most up to date list of results supporting this conjecture, see also \cite[Section 1]{P4}. Clearly there are no results disproving the conjecture. 

We also recall that Prest made a similar conjecture, relating the representation type of an algebra $A$ with existence of \emph{super-decomposable pure-injective $A$-modules}, see \cite{Zi,Pr,Pr3}. Results of this paper can be applied to the second conjecture as well, but we do not discuss this matter here.

Assume that $\CR\subseteq\mod(A)$ is a class of $A$-modules and $N\in\mod(A)$. A homomorphism $\alpha_{N}\colon M_{N}\ra N$, where $M_{N}\in\CR$, is
a \textit{right $\CR$-approximation} of $N$ if and only if for any $L\in\CR$ and $a\colon L\ra N$ there is $b\colon L\ra M_{N}$ such that
$\alpha_{N}b=a$, that is, the following diagram $$\xymatrix{&N\\ L\ar[ur]^{a}\ar[r]^{b}&M_{N}\ar[u]_{\alpha_{N}}}$$ is commutative. We say
that $\CR$ is \textit{contravariantly finite} if and only if any module $N\in\mod(A)$ has a right $\CR$-approximation. If $\CR\subseteq\mod(A)$ is a contravariantly finite class of $A$-modules and $\CS$ is the smallest full subcategory of $\mod(A)$ closed under isomorphisms and direct summands such that $\CR\subseteq \ob(\CS)$, then $\CS$ is a contravariantly finite subcategory of $\mod(A)$ in the classical sense of \cite{AuRe}.

A full subcategory $B$ of a locally bounded $K$-category $R$ is \textit{convex} if and only if for any $n\geq 1$ and objects $x,z_{1},\dots,z_{n},y$ of $R$ the following condition is satisfied: if $x,y$ are objects of $B$ and the vector spaces of morphisms $$R(x,z_{1}), R(z_{1},z_{2}),\dots,R(z_{n-1},z_{n}),R(z_{n},y)$$ are nonzero, then $z_{1},\dots,z_{n}$ are objects of $B$. If $R=\und{KQ}/I$ is a path $K$-category of a bound quiver $(Q,I)$ and $B$ is a convex subcategory of $R$, then $B=\und{KQ'}/I'$, for some convex subquiver $Q'$ of the quiver $Q$. Recall that a full subquiver $Q'$ of $Q$ is \textit{convex} if and only if for any path $c_{1}\dots c_{n}$ from the vertex $x$ to the vertex $y$ in $Q$ such that $x,y\in Q'_{0}$ we have $s(c_{i})\in Q'_{0}$, for $i=1,\dots,n-1$.

In our results we frequently apply the following two straightforward facts. The first lemma is proved for example in \cite[Lemma 2.6]{P6'}, see also \cite[Section 4]{P4}. 

\begin{lm}\label{0l2} Assume that $R$ is a locally bounded $K$-category. 
\begin{enumerate}[\rm(1)]
  \item If $B$ is a convex subcategory of $R$, then the category $\mod(B)$ is a contravariantly finite subcategory of $\mod(R)$.
  \item If $B$ is a factor category of $R$, then the category $\mod(B)$ is a contravariantly finite subcategory of $\mod(R)$.
\end{enumerate} In both cases, $\KG(B)\leq\KG(R)$.
\end{lm}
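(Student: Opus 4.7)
The plan is to construct right $\mod(B)$-approximations in $\mod(R)$ explicitly in both cases, and then deduce the dimension inequality from the general principle that contravariantly finite subcategories of $\mod(R)$ inherit an upper bound on the Krull-Gabriel dimension.

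In the factor case, $B=R/J$ for some ideal $J$, and $\mod(B)$ identifies with the full subcategory of $\mod(R)$ consisting of modules annihilated by $J$. Given $N\in\mod(R)$, the natural candidate is the largest $B$-submodule of $N$, which I would define by
\[
M_N(x)=\bigcap_{y\in\ob(R),\,j\in J(y,x)}\Ker\bigl(N(j)\colon N(x)\to N(y)\bigr).
\]
A direct check, using that $J$ is an ideal, shows that $M_N$ is an $R$-submodule of $N$ lying in $\mod(B)$, and that the inclusion $\alpha_N\colon M_N\hookrightarrow N$ is a right $\mod(B)$-approximation, because any morphism into $N$ from a module killed by $J$ has image inside $M_N$.

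In the convex case, I first note that $\mod(B)$ embeds in $\mod(R)$ via extension by zero outside $\ob(B)$, and that convexity is exactly what makes this extension functorial: any morphism $y\to x$ in $R$ with $x,y\in\ob(B)$ that factors through some $z\notin\ob(B)$ through nontrivial hom-spaces would force $z\in\ob(B)$, a contradiction. Given $N\in\mod(R)$, I would define the $B$-module
\[
M_N(x)=\bigcap_{y\notin\ob(B),\,r\in R(y,x)}\Ker\bigl(N(r)\colon N(x)\to N(y)\bigr)
\]
for $x\in\ob(B)$, with the inherited $B$-action. Convexity again ensures this is a well-defined $B$-module, and the map $\iota(M_N)\to N$ given by inclusion on $\ob(B)$ and zero elsewhere is a morphism in $\mod(R)$: naturality at any morphism $r\colon y\to x$ with $x\in\ob(B)$ and $y\notin\ob(B)$ is precisely the defining condition $M_N(x)\subseteq\Ker N(r)$. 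The approximation property then follows: any $a\colon\iota(L)\to N$ with $L\in\mod(B)$ must satisfy $\im(a_x)\subseteq\Ker N(r)$ for every such $r$ (because $a_y=0$), so $a_x$ lands in $M_N(x)$ and factors through $\alpha_N$ as required.

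For the inequality $\KG(B)\leq\KG(R)$, I would use the standard framework developed in \cite{P4}: given $T=\Coker\,{}_B(-,f)\in\CF(B)$, its lift $\tilde T=\Coker\,{}_R(-,\iota f)\in\CF(R)$ restricts to $T$ on $\mod(B)$ by full faithfulness of $\iota$, and a transfinite induction along the Krull-Gabriel filtration, powered by the existence of right $\mod(B)$-approximations, shows that $\tilde T\in\CF(R)_\alpha$ forces $T\in\CF(B)_\alpha$. Applying this at $\alpha=\KG(R)$ yields $\CF(B)_{\KG(R)}=\CF(B)$, hence the inequality. The main obstacle is the convex case: one must repeatedly exploit convexity to justify that extensions by zero are functorial and that naturality of the approximation holds at every morphism crossing the boundary of $\ob(B)$, while the factor case and the Krull-Gabriel step are essentially formal once contravariant finiteness is in hand.
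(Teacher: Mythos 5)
Your proposal is correct and follows essentially the same route as the sources the paper cites for this lemma (\cite[Lemma 2.6]{P6'}, \cite[Section 4]{P4}): the right $\mod(B)$-approximation of $N$ is taken to be the largest submodule of $N$ annihilated by the defining ideal in the factor case, and the largest submodule supported on $\ob(B)$ (your intersection of kernels of the $N(r)$ with $r$ crossing the boundary, with convexity guaranteeing that extension by zero is functorial) in the convex case. Your concluding transfinite induction is exactly the content of Lemma \ref{0l1}(2) applied to the exact, full and dense restriction functor $\CF(R)\ra\CF(B)$, whose well-definedness and fullness are what the approximations provide, so the argument is complete in the same sense as the cited proofs.
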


The second lemma is based on \cite[Appendix B]{Kr}. We use it only for categories of finitely presented functors. 

\begin{lm}\label{0l1} Assume that $\CC,\CD$ are abelian categories and $F:\CC\ra\CD$ is an exact functor.
\begin{enumerate}[\rm(1)]
  \item If $F$ is faithful, then $\KG(\CC)\leq\KG(\CD)$.
  \item If $F$ is full and dense, then $\KG(\CD)\leq\KG(\CC)$.
\end{enumerate}
\end{lm}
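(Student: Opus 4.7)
The plan is to prove both parts by transfinite induction on the ordinal $\alpha$ indexing the Krull-Gabriel filtration, exploiting the standard interaction between exact functors and Serre subcategories of abelian categories as in \cite[Appendix B]{Kr}. Before starting the induction I would record the following preliminary fact: an exact functor $F:\CC\ra\CD$ which is either faithful, or full and dense, sends the $\alpha$-th step of the filtration into the corresponding step on the other side and, upon passing to Serre quotients, induces an exact functor $\bar{F}_\alpha:\CC\slash\CC_\alpha\ra\CD\slash\CD_\alpha$ that inherits the relevant property, namely faithfulness in case (1) and fullness and density in case (2). This is the technical engine of the whole argument.

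For part (1), I would prove by transfinite induction that $F(X)\in\CD_\alpha$ implies $X\in\CC_\alpha$ for every $X\in\CC$. The base case $\alpha=-1$ is immediate, since an exact faithful functor satisfies $F(X)=0\Rightarrow X=0$. At a successor ordinal, the inductive hypothesis lets us view the induced functor $\bar{F}_\alpha$ as an exact faithful functor between the Serre quotients, and a standard fact says that such a functor \emph{reflects} the property of having finite length: the preimage, object by object, of a composition series remains a filtration whose subquotients are nonzero and simple, by faithfulness and exactness. Hence $F(X)$ of finite length in $\CD\slash\CD_\alpha$ forces $X$ of finite length in $\CC\slash\CC_\alpha$, i.e.\ $X\in\CC_{\alpha+1}$. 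The limit step is a direct verification from the union definition. Specialising to $\alpha=\KG(\CD)$ gives $\CC=\CC_{\KG(\CD)}$ and hence $\KG(\CC)\leq\KG(\CD)$.

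For part (2), dually, I would prove by transfinite induction that $X\in\CC_\alpha$ implies $F(X)\in\CD_\alpha$. Here the preliminary fact takes the following form: $\Ker F=\{X\in\CC\mid F(X)=0\}$ is a Serre subcategory of $\CC$ and the induced functor $\CC\slash\Ker F\ra\CD$ is an equivalence; combined with the inductive hypothesis, this yields an exact, full and dense functor $\bar{F}_\alpha:\CC\slash\CC_\alpha\ra\CD\slash\CD_\alpha$. Such a functor \emph{preserves} finite length directly from the definition: the image of a composition series is a filtration whose subquotients are either simple or zero, and merging adjacent terms produces a composition series in the quotient. This handles the successor step; the limit step is again routine. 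By density every object of $\CD$ is isomorphic to $F(X)$ for some $X\in\CC$, hence lies in $\CD_{\KG(\CC)}$, yielding $\KG(\CD)\leq\KG(\CC)$.

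The main obstacle is the bookkeeping in the preliminary fact: to even \emph{define} $\bar{F}_\alpha$ one needs the inclusion $F(\CC_\alpha)\subseteq\CD_\alpha$, and to continue the induction one needs faithfulness, respectively fullness and density, to descend to each Serre quotient. An honest proof therefore carries both the reflection/preservation statement and the inclusion of filtrations simultaneously in the inductive hypothesis. All of this is classical localization theory of abelian categories, and once it is in place the induction step reduces to the elementary remark about composition series quoted above.
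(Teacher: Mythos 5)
The paper does not actually prove this lemma in the text --- it only cites \cite[Appendix B]{Kr} --- so your transfinite induction is being measured against the standard argument there rather than against a written proof. Your part (2) is essentially correct: the inductive statement $F(\CC_\alpha)\subseteq\CD_\alpha$ is the right one, and the key mechanism --- that for $F$ exact, full and dense every subobject of $F(X)$ has the form $F(X')$ for some $X'\subseteq X$, so simples of $\CC\slash\CC_\alpha$ are sent to simple or zero objects of $\CD\slash\CD_\alpha$ --- goes through. (Whether the induced functor $\bar F_\alpha$ is itself full is delicate and irrelevant; preservation of finite length is all you use.)

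Part (1), however, rests on a false preliminary claim. You assert that a faithful exact $F$ satisfies $F(\CC_\alpha)\subseteq\CD_\alpha$, and you need this both to define $\bar F_\alpha:\CC\slash\CC_\alpha\ra\CD\slash\CD_\alpha$ and to run the successor step. This inclusion already fails for $\alpha=0$: let $\CC=\mod(K)$, let $\CD$ be the category of finitely generated $K[x]$-modules, and let $F(V)=V\otimes_K K[x]$. This $F$ is exact and faithful, $\CC_0=\CC$, yet $F(K)=K[x]$ has infinite length, so $F(\CC_0)\not\subseteq\CD_0$ (the conclusion of the lemma still holds here, since $\KG(\CC)=0\leq 1=\KG(\CD)$; it is only your intermediate step that breaks). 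Faithfulness gives \emph{reflection} of the filtration, not preservation, so the functor $\bar F_\alpha$ you invoke need not exist. The repair is to induct on the statement $F^{-1}(\CD_\alpha)\subseteq\CC_\alpha$ --- which is what you in fact announce --- and at the successor step either (a) factor the composite $\CC\ra\CD\ra\CD\slash\CD_\alpha$ through $\CC\slash F^{-1}(\CD_\alpha)$, where the induced exact functor has trivial kernel, hence is faithful and reflects finite length, and then pass to the further quotient $\CC\slash\CC_\alpha$ using the inductive inclusion; or (b) avoid quotient functors entirely: a strictly descending chain $X\supseteq X_1\supseteq X_2\supseteq\cdots$ witnessing $X\notin\CC_{\alpha+1}$ has subquotients $X_i\slash X_{i+1}\notin\CC_\alpha$, hence $F(X_i)\slash F(X_{i+1})\cong F(X_i\slash X_{i+1})\notin\CD_\alpha$ by the contrapositive of the inductive hypothesis, so $F(X)\notin\CD_{\alpha+1}$. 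Relatedly, ``the preimage of a composition series'' is not the right mechanism even where faithfulness applies: one pushes chains of subobjects forward and uses that nonzero subquotients stay nonzero; one cannot pull a composition series back along a merely faithful functor.
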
  

The following theorem is one of the main results of \cite{P6'}. In a slogan form, the theorem states that \emph{Galois $G$-coverings do not increase the Krull-Gabriel dimension}.

\begin{thm}\label{t3} Assume that $F:R\ra A$ is a Galois $G$-covering. Then $\KG(R)\leq\KG(A)$. In particular:
\begin{enumerate}[\rm(1)]
  \item If $\KG(R)$ is undefined, then $\KG(A)$ is undefined.
  \item If $\KG(A)$ is finite, then $\KG(R)$ is finite.
\end{enumerate}
\end{thm}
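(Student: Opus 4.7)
The plan is to derive the inequality $\KG(R) \leq \KG(A)$ from Lemma \ref{0l1}(1) applied to the functor $\Phi:\CF(R)\ra\CF(A)$ of Theorem \ref{t2}. Since $\KG(R)$ and $\KG(A)$ are by definition the Krull-Gabriel dimensions of the abelian categories $\CF(R)$ and $\CF(A)$, it is enough to exhibit an exact and faithful functor $\CF(R)\ra\CF(A)$. The two clauses in the ``In particular'' part will then be immediate consequences of the main inequality (an undefined dimension corresponds to $\infty$, and the inequality transports finiteness downwards from $A$ to $R$).

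For exactness we have nothing to do: Theorem \ref{t2} already asserts that $\Phi:\CF(R)\ra\CF(A)$ is an exact functor between abelian categories (it is left exact as the right adjoint of $\Psi$ restricted appropriately, and right exact as the left adjoint $\Phi$ is; equivalently, one invokes the explicit cokernel formula $\Phi(\Coker{}_{R}(*,f))\cong\Coker{}_{A}(-,F_\lambda(f))$ together with $\Phi|_{\CF(R)}\cong\Theta|_{\CF(R)}$ from the same theorem).

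For faithfulness, I would use the natural isomorphism from Theorem \ref{t4}(3), namely
\[
\bigoplus_{g\in G}{}_{\CR}(gT,T')\;\cong\;{}_{\CA}(\Phi(T),\Phi(T')),
\]
which, as explained in the remark following Theorem \ref{t4}, is given concretely by $(\iota_g)_{g\in G}\mapsto\sum_{g\in G}\Phi(\iota_g)$. Restricting to the direct summand indexed by $g=1$, this isomorphism sends $\iota\in{}_{\CR}(T,T')$ to $\Phi(\iota)\in{}_{\CA}(\Phi(T),\Phi(T'))$. Hence the map $\iota\mapsto\Phi(\iota)$ factors as the composition of a split monomorphism (the inclusion of a direct summand) with an isomorphism, so it is injective. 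This is exactly faithfulness of $\Phi$ on morphisms between $T$ and $T'$, and since $T,T'\in\CF(R)$ were arbitrary, $\Phi$ is faithful on all of $\CF(R)$. Note that the torsion-free hypothesis on $G$ is not needed for clause (3) of Theorem \ref{t4}, and so is not needed here either, matching the stated generality of Theorem \ref{t3}.

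With $\Phi:\CF(R)\ra\CF(A)$ now known to be exact and faithful, Lemma \ref{0l1}(1) applies and yields $\KG(\CF(R))\leq\KG(\CF(A))$, i.e.\ $\KG(R)\leq\KG(A)$. The particular cases then follow: if $\KG(R)=\infty$ then $\KG(A)\geq\KG(R)=\infty$, and if $\KG(A)$ is a finite ordinal then so is $\KG(R)$. The only nontrivial ingredient is the explicit form of the hom-isomorphism in Theorem \ref{t4}(3); this is the main point one has to trust, but it is precisely the point highlighted (and proved elsewhere) in \cite{P6'}. Given that input, the present proof is essentially a one-line application of Lemma \ref{0l1}(1).
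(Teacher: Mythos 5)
Your proposal is correct and follows essentially the same route as the paper's own proof: exactness of $\Phi$ from Theorem \ref{t2}, faithfulness by restricting the isomorphism $\bigoplus_{g\in G}{}_{\CR}(gT,T')\cong{}_{\CA}(\Phi(T),\Phi(T'))$ to the summand $g=1$ (the paper phrases this as composing with the inclusion monomorphism), and then Lemma \ref{0l1}(1). Your remark that torsion-freeness of $G$ is not needed here also matches the paper's discussion following Theorem \ref{t4}.
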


\begin{proof} It follows from Theorem \ref{t2} that $\Phi:\CF(R)\ra\CF(A)$ is exact. Observe that $\Phi$ is also faithful. Indeed, recall from \ref{t4} that $\bigoplus_{g\in G}{}_{\CR}(gT,T')\cong{}_{\CA}(\Phi(T),\Phi(T'))$ and this isomorphism is given by $(\iota_{g})_{g\in G}\mapsto\sum_{g\in G}\Phi(\iota_{g})$. Hence, by composing this isomorphism with the inclusion monomorphism ${}_{\CR}(T,T')\hookrightarrow\bigoplus_{g\in G}{}_{\CR}(gT,T')$, we obtain a monomorphism which gets the form $\iota\mapsto\Phi(\iota)$. This shows the faithfulness of the functor $\Phi$. Thus the inequality $\KG(R)\leq\KG(A)$ follows from \ref{0l1}.
\end{proof}

The above theorem establishes deep connections between covering theory, functor categories and Krull-Gabriel dimension. First connections of this kind are shown in \cite{P4,P6} where we give sufficient conditions for a Galois covering to preserve the Krull-Gabriel dimension. We recall some notions in order to formulate the result.

Assume that $G$ is an admissible group of $K$-linear automorphisms of $R$. A finite convex subcategory $B$ of the category $R$ is called \textit{fundamental domain} \cite{P6} if and only if for any $M\in\ind(R)$ there exists $g\in G$ such that $\supp({}^{g}M)\subseteq B$. We say that a locally bounded $K$-category $R$ is \textit{intervally finite} \cite[2.1]{BoGa} if and only if a convex hull of any finite full subcategory of $R$ is finite.

\begin{thm}\label{t8} Assume that $R$ is a locally support-finite and intervally finite locally bounded $K$-category. Assume that $G$ is an admissible torsion-free group of $K$-linear automorphisms of $R$ and let $F:R\ra A\cong R\slash G$ be the Galois $G$-covering. Then there exists a fundamental domain $B$ of $R$ and we have $\KG(R)=\KG(B)=\KG(A)$.
\end{thm}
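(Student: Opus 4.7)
The plan is to establish the chain $\KG(B)\leq\KG(R)\leq\KG(A)\leq\KG(B)$. Two of these inequalities come directly from results already in the paper; the third requires a genuine construction and will be the main obstacle.

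First I would construct $B$ explicitly. Since $G$ is admissible, the set of $G$-orbits in $\ob(R)$ is finite; fix representatives $x_{1},\dots,x_{n}$. By local support-finiteness, each $S_{i}=\bigcup\{\supp(M):M\in\ind(R),\ M(x_{i})\neq 0\}$ is finite, so $S=\bigcup_{i}S_{i}$ is finite. Let $B$ be the convex hull of $S$ in $R$; it is finite by intervally-finiteness. To see $B$ is a fundamental domain, take $M\in\ind(R)$ and $a\in\supp(M)$; writing $a=gx_{i}$ for some $g\in G$, the module ${}^{g^{-1}}M$ satisfies $({}^{g^{-1}}M)(x_{i})=M(a)\neq 0$, so $\supp({}^{g^{-1}}M)\subseteq S_{i}\subseteq B$. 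The inequalities $\KG(B)\leq\KG(R)$ and $\KG(R)\leq\KG(A)$ are then immediate from Lemma \ref{0l2}(1) (applied to the convex subcategory $B\subseteq R$) and Theorem \ref{t3}, respectively.

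The main obstacle is $\KG(A)\leq\KG(B)$. My plan is to realise it via an exact faithful functor $\pi_{\bullet}:\CF(A)\ra\CF(B)$ and invoke Lemma \ref{0l1}(1). Since $R$ is locally support-finite and $G$ is torsion-free, Theorem \ref{0t1}(5) ensures that $F_{\lambda}:\mod(R)\ra\mod(A)$ is dense, and the composite $\pi:\mod(B)\hookrightarrow\mod(R)\xrightarrow{F_{\lambda}}\mod(A)$ hits every indecomposable of $\mod(A)$, by the fundamental domain property combined with $F_{\lambda}({}^{g}M)\cong F_{\lambda}(M)$. Define $\pi_{\bullet}(U)=U\circ\pi$ for $U\in\CF(A)$. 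Exactness is pointwise, and faithfulness follows at once, because a morphism of functors vanishing on every $\pi(N)$ vanishes on every indecomposable of $\mod(A)$ and hence is zero.

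The technical point is checking that $\pi_{\bullet}(U)$ really lies in $\CF(B)$, not merely in $\mod(\CB)$. For $U=\Coker{}_{A}(-,\alpha)$ with $\alpha:F_{\lambda}(M_{X})\ra F_{\lambda}(M_{Y})$ (possible by density, with $M_{X},M_{Y}\in\mod(B)$), Theorem \ref{0t1}(4) identifies ${}_{A}(F_{\lambda}(N),F_{\lambda}(M_{X}))$ for $N\in\mod(B)$ with $\bigoplus_{g\in G}{}_{B}(N,({}^{g^{-1}}M_{X})|_{B})$, and only finitely many summands are nonzero because $\supp(N)\subseteq B$ while ${}^{g^{-1}}M_{X}$ is supported in $g^{-1}\supp(M_{X})$. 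Packaging the finitely many nonzero translates into finite-dimensional $B$-modules $M'_{X},M'_{Y}$, the map induced by $\alpha$ has the form ${}_{B}(-,h)$ for some $h:M'_{X}\ra M'_{Y}$, whence $\pi_{\bullet}(U)\cong\Coker{}_{B}(-,h)\in\CF(B)$. Once this is established, Lemma \ref{0l1}(1) yields $\KG(A)\leq\KG(B)$ and closes the chain of equalities.
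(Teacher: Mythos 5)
Your proposal is correct and follows essentially the route the paper indicates for this theorem (the survey defers the proof to \cite{P4,P6}, but Section 5.3 describes precisely this strategy: the easy inequalities $\KG(B)\leq\KG(R)$ and $\KG(R)\leq\KG(A)$ from Lemma \ref{0l2} and Theorem \ref{t3}, and the hard inequality $\KG(A)\leq\KG(B)$ via an exact faithful pull-up along a dense functor $\mod(B)\ra\mod(A)$ whose image lands in finitely presented functors). One formula in your last paragraph needs repair: for $N\in\mod(B)$, the space ${}_{R}(N,{}^{g}M_{X})$ is in general a \emph{proper} subspace of ${}_{B}(N,({}^{g}M_{X})|_{B})$, since a homomorphism out of a module supported in $B$ must additionally satisfy $({}^{g}M_{X})(r)\circ f_{y}=0$ for every $r\in R(x,y)$ with $y\in B$ and $x\notin B$, a constraint invisible to the naive restriction (e.g.\ for $R$ the path category of $1\ra 2$, $B=\{2\}$, one has ${}_{R}(S_{2},P_{2})=0$ but ${}_{B}(S_{2},P_{2}|_{B})\neq 0$); the correct representing object is the $B$-submodule of $({}^{g}M_{X})|_{B}$ cut out by these conditions. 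This does not damage the argument: the functor $N\mapsto{}_{A}(F_{\lambda}(N),F_{\lambda}(M_{X}))$ on $\mod(B)$ is still representable by a finite-dimensional $B$-module, which is all the packaging step requires.
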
 Results of \cite{P4} can be seen as showing only that $\KG(R)\leq\KG(A)$ if $R$ is locally support-finite and the group $G$ is torsion-free. We show in \cite{P6} that in this situation the opposite inequality also holds. The assumptions that we made are crucial for proofs in \cite{P6}, so it seems that in general $\KG(A)$ may be larger then $\KG(R)$. Examples of such case are not known.

\subsection{Standard self-injective algebras}

In this section we characterize the Krull-Gabriel dimension of locally support-finite repetitive $K$-categories and standard self-injective algebras. We base mainly on \cite{P4} and Theorem \ref{t8}. The following theorem is a part of Theorem (B) proved in \cite{AsSk4}.

\begin{thm}\label{s1} Assume that $K$ is an algebraically closed field and $A$ is a finite dimensional basic and connected $K$-algebra. The following conditions are equivalent.
\begin{enumerate}[\rm(1)]
	\item The repetitive $K$-category $\widehat{A}$ is locally support-finite and tame.
	\item There exists an algebra $B$ such that $\widehat{A}\cong\widehat{B}$ and $B$ is tilted of Dynkin type, tilted of Euclidean type or tubular.
\end{enumerate}
\end{thm}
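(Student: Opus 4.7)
The plan is to split the equivalence into its two implications and treat the hard direction via the Galois covering $\widehat{A}\to T(A)=\widehat{A}/\langle\nu\rangle$ combined with the classification of tame self-injective algebras of polynomial growth.

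For the direction $(2)\Rightarrow(1)$ I would argue case by case. When $B$ is tilted of Dynkin type $\Delta$, the category $\widehat{B}$ is locally representation-finite with $\Gamma_{\widehat{B}}\cong\mathbb{Z}\Delta$, so local support-finiteness is immediate and tameness is trivial. For $B$ tilted of Euclidean type, the Auslander-Reiten quiver of $\widehat{B}$ decomposes into a $\mathbb{Z}$-family of preprojective/preinjective components joined by stable tubes, and one verifies that each indecomposable has bounded support, yielding local support-finiteness; tameness is inherited from the tameness of $T(B)$ via the push-down functor. The tubular case is analogous, the tubes being replaced by a tubular family. In each case one also checks that for a fixed object $x\in\ob(\widehat{B})$, the union of supports of indecomposables nonzero at $x$ is finite, using that only finitely many $\langle\nu\rangle$-orbits hit any given vertex.

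For $(1)\Rightarrow(2)$, which is the substantial direction, I would proceed as follows. The projection $F\colon\widehat{A}\to T(A)=\widehat{A}/\langle\nu\rangle$ is a Galois $\mathbb{Z}$-covering with admissible torsion-free group, as in Example \ref{e44}. Since $\widehat{A}$ is assumed locally support-finite, Theorem \ref{0t1}(5) implies that the push-down functor $F_\lambda\colon\mod(\widehat{A})\to\mod(T(A))$ is dense, so the tameness of $\widehat{A}$ forces tameness of $T(A)$. Hence $T(A)$ is a tame self-injective algebra whose Galois cover $\widehat{A}$ is locally support-finite, and one invokes the classification of tame self-injective algebras of polynomial growth (the key external input, due to Skowro\'nski and collaborators): up to socle-equivalence, $T(A)$ is an orbit algebra $\widehat{B}/G$ for an algebra $B$ that is tilted of Dynkin, tilted of Euclidean, or tubular. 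Matching this orbit presentation of $T(A)$ with the given $T(A)=\widehat{A}/\langle\nu\rangle$ and using the universality of the Nakayama automorphism, one deduces $\widehat{A}\cong\widehat{B}$.

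The decisive obstacle is the classification theorem for tame self-injective algebras invoked in the hard direction: essentially all the real work sits there, and my plan only reduces Theorem \ref{s1} to that classification. A second delicate point is the final identification $\widehat{A}\cong\widehat{B}$; an isomorphism of orbit algebras comes almost for free, but lifting it to an isomorphism of repetitive categories requires comparing Nakayama automorphisms and using that the construction $B\mapsto\widehat{B}$ is, for an appropriate choice of $B$ in its tilting-equivalence class, injective on its image. I would expect these two reductions to constitute the bulk of any careful write-up, whereas the implication $(2)\Rightarrow(1)$ is a more routine structural verification.
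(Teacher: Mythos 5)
You should first note that the paper does not prove this statement at all: it is imported verbatim, with the sentence ``The following theorem is a part of Theorem (B) proved in \cite{AsSk4}'', so the only ``proof'' on record is the external one of Assem and Skowro\'nski in \emph{On tame repetitive algebras}. Your write-up is therefore an attempt to reprove a deep classification result that the survey merely cites, and it should be judged on whether the reduction you propose actually closes.

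It does not, and the gap sits exactly where you flag ``the key external input''. In the direction $(1)\Rightarrow(2)$ you pass from tameness of $\widehat{A}$ to tameness of $T(A)$ (fine, via the dense push-down), and then invoke ``the classification of tame self-injective algebras of polynomial growth''. But your hypothesis only yields that $T(A)$ is \emph{tame}; nothing so far gives \emph{polynomial growth}, and tame self-injective algebras of non-polynomial growth exist. Proving that tameness of $\widehat{A}$ together with local support-finiteness forces $T(A)$ to be of polynomial growth (and standard) is precisely the substantive content of Theorem (B) of \cite{AsSk4} --- it is not a lemma you can borrow from the polynomial-growth classification without circularity. A second, smaller defect is the ``up to socle-equivalence'' caveat: the classification of self-injective algebras of polynomial growth includes non-standard socle deformations in small characteristic, and a socle-equivalence $T(A)\sim\widehat{B}/G$ does not by itself produce an isomorphism $\widehat{A}\cong\widehat{B}$; your appeal to ``universality of the Nakayama automorphism'' does not repair this, since the two orbit presentations need not involve the same group or even isomorphic covers a priori. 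The direction $(2)\Rightarrow(1)$ is essentially correct as a structural verification (and is indeed the routine half), but the hard direction as written reduces the theorem to statements that are either equivalent to it or strictly unavailable from the stated hypotheses.
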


Assume that $A$ is an algebra. A \textit{cycle} in $\ind(A)$ is a sequence $$M_{0}\stackrel{f_{1}}{\ra} M_{1}\ra\hdots\ra M_{r-1}\stackrel{f_{r}}{\ra}M_{r}=M_{0}$$ of nonzero nonisomorphisms in $\ind(A)$. This cycle is \textit{finite} if and only if $f_{1},\dots,f_{r}\notin\rad_{A}^{\infty}$. Following \cite{AsSk1} we call the algebra $A$ \textit{cycle-finite} if and only if all cycles in $\ind(A)$ are finite. The following result follows from \cite[Theorem 1.2, Corollary 1.5]{Sk5}.

\begin{thm}\label{s2} Assume that $K$ is an algebraically closed field and $A$ is a finite dimensional basic and connected $K$-algebra. If $A$ is cycle-finite, then $A$ is domestic if and only if $\KG(A)=2$.
\end{thm}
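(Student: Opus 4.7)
The plan is to combine the structural theory of cycle-finite algebras, as developed by Assem--Skowro\'nski, with the covering-theoretic machinery summarised in Theorems~\ref{t3} and~\ref{t8}. The starting point is the known description of the module category of a cycle-finite algebra $A$: its Auslander--Reiten quiver $\Gamma_A$ decomposes into a postprojective component, a preinjective component, a (possibly infinite) family of stable tubes, and finitely many further semiregular components coming from tilted or tubular pieces. In the cycle-finite setting, $A$ is domestic precisely when the generic behaviour is governed by only finitely many one-parameter families controlling the tubes.

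For the implication ``$A$ domestic $\Rightarrow \KG(A)=2$'' I would proceed as follows. Since $A$ is domestic and representation-infinite, $\KG(A)\ge 1$, and Auslander's theorem forces $\KG(A)=0$ only in the representation-finite case; the existence of a stable tube then upgrades this to $\KG(A)\ge 2$ by a standard argument producing generic modules whose functors witness the second layer of the Krull--Gabriel filtration. For the upper bound $\KG(A)\le 2$, the plan is to realise $A$ (or a category to which it is Morita-equivalent) as an orbit category $R/G$ of a locally support-finite, intervally finite locally bounded $K$-category $R$, using the repetitive or cluster-repetitive constructions recalled in Example~\ref{e44} together with Theorem~\ref{s1}. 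Theorem~\ref{t8} then gives $\KG(A)=\KG(B)$ for a finite fundamental domain $B$, and $B$ can be chosen so that it is tilted of Euclidean type or tubular, both of which are classically known to have Krull--Gabriel dimension~$2$.

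For the converse ``$\KG(A)=2 \Rightarrow A$ domestic'' I would argue by contraposition: assume $A$ is cycle-finite but not domestic. Then the structural classification yields, for every $n\in\NN$, families of pairwise orthogonal tubes whose generic points assemble into parametrising varieties of unbounded dimension. Using these tubes one exhibits finitely presented functors that remain nonzero and of infinite length in the successive quotients $\CF(A)/\CF(A)_1$ and $\CF(A)/\CF(A)_2$; this forces $\KG(A)\ge 3$ or $\KG(A)$ to be undefined. In either case $\KG(A)\neq 2$, which is the desired contrapositive.

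The hard part will be the uniform upper bound $\KG(A)\le 2$ for the whole domestic cycle-finite class, because outside the self-injective case the direct reduction via $\widehat{C}\to T(C)$ is not available. One must either pass through tilting or cluster-tilting and invoke Theorem~\ref{t3}, or else perform a hands-on stratification of $\CF(A)$ by the components of $\Gamma_A$, showing that after factoring out functors of finite length (the zeroth stage) the resulting category is controlled entirely by finitely many generic points of the tubes, so that exactly one further Krull--Gabriel step exhausts it. Managing the interaction between the tubes and the semiregular components so that this count is tight is where the technical weight of the argument sits.
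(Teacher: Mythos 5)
The paper does not actually prove this statement: it is imported wholesale from Skowro\'nski's work on cycle-finite algebras (\cite[Theorem 1.2, Corollary 1.5]{Sk5}), so there is no internal argument to measure your sketch against. Judged on its own, your proposal contains one outright error and one unfilled gap that is precisely the content of the theorem. The error: you assert that the fundamental domain $B$ ``can be chosen so that it is tilted of Euclidean type or tubular, both of which are classically known to have Krull--Gabriel dimension $2$''. Tubular algebras have \emph{undefined} Krull--Gabriel dimension, $\KG(B)=\infty$, by Geigle's theorem \cite{Ge2}; this paper uses that fact explicitly in the proof of Theorem \ref{s3}(3). Tubular algebras are nondomestic, so they cannot occur in the domestic half of your argument at all, and crediting them with $\KG=2$ would destroy the equivalence you are trying to prove.

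The gap: the upper bound $\KG(A)\le 2$ for an arbitrary domestic cycle-finite algebra is the hard direction, and your route does not reach it. Theorem \ref{t8} applies only when $A$ is presented as $R/G$ with $R$ locally support-finite and intervally finite and $G$ admissible torsion-free; such coverings exist for standard self-injective algebras of polynomial growth and for cluster-tilted algebras, but a general cycle-finite algebra (e.g.\ a coil enlargement of a tame concealed algebra) admits no such presentation, as you yourself concede in the last paragraph. The ``hands-on stratification'' you gesture at is exactly what \cite{Sk5} carries out, via the structure theory of cycle-finite algebras --- the control of $\Gamma_A$ and of the infinite radical by finitely many tame concealed and tubular convex subcategories --- and none of the covering machinery of Sections 2--5 substitutes for it. The parts of your sketch that do work are the lower bound (for which the clean argument is Krause's theorem \cite{Kr2} that no Artin algebra has Krull--Gabriel dimension $1$, rather than an ad hoc construction from a stable tube) and the contrapositive for the nondomestic case, where reduction to a tubular or nondomestic string quotient with undefined $\KG$, combined with Lemma \ref{0l2}, gives $\KG(A)=\infty$ (not merely $\KG(A)\ge 3$).
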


The following theorem describes the Krull-Gabriel dimension of locally support-finite $K$-categories. We base on \cite[Theorem 7.3]{P4}, but the proof is slightly changed in order to fit to our setting. Recall that it follows from \cite{DoSk2} that a locally bounded $K$-category $R$ is tame if and only if any finite full subcategory of $R$ is tame. Moreover, $R$ is wild if and only if there exists a finite full subcategory of $R$ which is wild.

\begin{thm}\label{s3} Assume that $K$ is an algebraically closed field and $A$ is a finite dimensional basic and connected $K$-algebra such that $\widehat{A}$ is locally support-finite. Then $\KG(\widehat{A})\in\{0,2,\infty\}$ and the following assertions hold. 
\begin{enumerate}[\rm(1)]
	\item $\KG(\widehat{A})=0$ if and only if $\widehat{A}\cong\widehat{B}$ where $B$ is some tilted algebra of Dynkin type.
	\item $\KG(\widehat{A})=2$ if and only if $\widehat{A}\cong\widehat{B}$ where $B$ is some tilted algebra of Euclidean type.
	\item $\KG(\widehat{A})=\infty$ if and only if $\widehat{A}$ is wild or $\widehat{A}\cong\widehat{B}$ where $B$ is some tubular algebra.
\end{enumerate} 
\end{thm}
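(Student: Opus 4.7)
My plan is to combine the reduction furnished by Theorem \ref{t8} with the tame classification of Theorem \ref{s1} and the cycle-finite dichotomy of Theorem \ref{s2}. First I would verify the hypotheses of Theorem \ref{t8} for $R=\widehat{A}$: the category is locally support-finite by assumption; it is intervally finite because in the band-matrix description of Example \ref{e44} every hom-space $\widehat{A}(e_{m,j},e_{l,i})$ vanishes unless $i\in\{j,j+1\}$, so the convex hull of any finite set of objects is finite; and the Nakayama group $G=\langle\nu_{\widehat{A}}\rangle\cong\ZZ$ is admissible, torsion-free, and acts freely on objects. Theorem \ref{t8} applied to the Galois covering $F\colon\widehat{A}\ra\widehat{A}/\langle\nu_{\widehat{A}}\rangle\cong T(A)$ then reduces the computation to the identity $\KG(\widehat{A})=\KG(T(A))$.

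Next I would split according to whether $\widehat{A}$ is tame or wild. In the tame case Theorem \ref{s1} forces $\widehat{A}\cong\widehat{B}$ with $B$ tilted of Dynkin type, tilted of Euclidean type, or tubular. For Dynkin $B$, the trivial extension $T(B)$ is one of the classical representation-finite self-injective algebras, so Auslander's finite-length theorem yields $\KG(T(B))=0$. For Euclidean $B$, the algebra $T(B)$ is tame, cycle-finite and domestic, so Theorem \ref{s2} gives $\KG(T(B))=2$. For tubular $B$, $T(B)$ is cycle-finite but non-domestic and not representation-finite, so Theorem \ref{s2} and Auslander's theorem together rule out the values $0$ and $2$; the remaining equality $\KG(T(B))=\infty$ I would quote from the classical analysis of tubular algebras due to Geigle. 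For the wild case, the Dowbor--Skowro\'nski wildness criterion supplies some finite wild convex subcategory $C\subseteq\widehat{A}$; Lemma \ref{0l2} then gives $\KG(C)\leq\KG(\widehat{A})$, and the known undefined-$\KG$ statement for wild algebras in this standard self-injective setting forces $\KG(\widehat{A})=\infty$. The converses in (1)--(3) are automatic, since the three tame classes and the wild case are mutually exclusive and each yields a different value in $\{0,2,\infty\}$.

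The main obstacle will be proving $\KG(T(B))=\infty$ in the tubular and wild cases. Theorem \ref{s2} only controls the regime $\KG\leq 2$ by separating cycle-finite domestic from cycle-finite non-domestic, and in itself it cannot exclude finite values $\KG\geq 3$. Consequently the nontrivial external ingredients in this proof are Geigle's theorem that tubular algebras have undefined Krull--Gabriel dimension and the analogous undefined-$\KG$ result for the wild standard self-injective algebras under consideration; everything else amounts to a careful bookkeeping of the reductions provided by Theorems \ref{t8}, \ref{s1} and \ref{s2}.
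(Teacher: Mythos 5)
Your proposal is correct and follows essentially the same route as the paper: Theorem \ref{t8} reduces the computation to a finite algebra (the paper works with the fundamental domain $C\subseteq\widehat{B}$, you with $T(A)$ — both come from the same equality $\KG(\widehat{B})=\KG(B)=\KG(T(B))$), Theorem \ref{s1} supplies the trichotomy, Auslander's theorem and Theorem \ref{s2} settle the Dynkin and Euclidean cases, and Geigle together with a finite wild convex subcategory settles the rest, with exhaustiveness upgrading the implications to equivalences. The one step to make explicit is that Geigle's result gives $\KG(B)=\infty$ for the tubular algebra $B$ itself, not directly for $T(B)$; you still need Lemma \ref{0l2} to transfer this (the paper uses the convex embedding $B\subseteq\widehat{B}$; alternatively the factor map $T(B)\ra B$ works), exactly as you already do in the wild case.
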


\begin{proof} We show all assertions simultaneously. In any case, we consider Galois coverings of the form $\widehat{B}\ra B\slash\langle\nu_{B}\rangle$. Recall that the orbit category $B\slash\langle\nu_{B}\rangle$ is the \emph{trivial extension} $T(B)$ of $B$. Theorem \ref{s1} implies that the repetitive category $\widehat{B}$ is always locally support-finite, see also \cite{Sk0}. Hence we conclude from \ref{t8} that there is a finite convex subcategory $C$ of $\widehat{B}$ such that $\KG(C)=\KG(\widehat{B})$. Since $C$ is finite, we may also view $C$ as an algebra.

Assume that $B$ is a tilted algebra of Dynkin type. We show that $\KG(\widehat{B})=0$. Indeed, in this case $\widehat{B}$ is locally representation-finite \cite{AHR}. Thus $C$ is representation-finite, so $\KG(C)=0$ by Auslander's result. We get $\KG(\widehat{B})=\KG(C)=0$.

Assume that $B$ is a tilted algebra of Euclidean type. We show that $\KG(\widehat{B})=2$. It follows from \cite{AsSk1,Sk0,ANSk} that the tilted algebras of Euclidean type and their repetitive categories are cycle-finite of domestic type. This implies that the algebra $C$ is cycle-finite of domestic type and thus $\KG(C)=2$ by Theorem \ref{s2}. Hence $\KG(\widehat{B})=\KG(C)=2$.

Assume that $B$ is a tubular algebra. It follows from \cite{Ge2} that $\KG(B)=\infty$. Thus we get $\infty=\KG(B)\leq\KG(\widehat{B})$ by \ref{0l1}, because $B$ is a convex subcategory of $\widehat{B}$. Finally, if $\widehat{A}$ is wild, then there is a finite convex subcategory $B$ of $\widehat{A}$ such that $B$ is wild, see for example \cite{DoSk2}. Consequently, $\KG(B)=\infty$ and so $\KG(\widehat{A})=\infty$ as in the previous case. To complete the proof, observe that Theorem \ref{s1} yields $\KG(\widehat{A})\in\{0,2,\infty\}$. Hence the above implications are in fact equivalences.
\end{proof}

It follows from \cite{Sk0}, \cite{Sk4} that representation-infinite standard selfinjective algebras of polynomial growth are orbit algebras of the form $\widehat{B}\slash G$ where $B$ is a tilted algebra of Euclidean type or a tubular algebra and $G$ is an infinite cyclic admissible group of $K$-linear automorphisms of $\widehat{B}$. 

Therefore, in order to determine the Krull-Gabriel dimension of these algebras, it suffices to apply Theorem 7.3 and Theorem 6.3. 

The following theorem determines the Krull-Gabriel dimension of standard selfinjective algebras of polynomial growth. This theorem supports Conjecture 1.1 of M. Prest on the finiteness of Krull-Gabriel dimension, see Section 1.

\begin{thm}\label{s4} Assume that $A$ is a standard selfinjective algebra over an algebraically closed field $K$. Then the following assertions hold.
\begin{enumerate}[\rm (1)]
	\item If the algebra $A$ is representation-infinite domestic, then $\KG(A)=2$.
	\item If the algebra $A$ is nondomestic of polynomial growth, then $\KG(A)=\infty$.
\end{enumerate}
\end{thm}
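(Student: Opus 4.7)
The plan is to combine the classification of standard self-injective algebras of polynomial growth with the covering-theoretic machinery developed in Theorem \ref{t8} and the dimension computation for repetitive categories in Theorem \ref{s3}. The bridge between the hypothesis on $A$ and those theorems is provided by results of Skowroński (cited as \cite{Sk0,Sk4} in the paper), which represent every representation-infinite standard self-injective algebra of polynomial growth as an orbit algebra $\widehat{B}\slash G$, where $B$ is either tilted of Euclidean type (domestic case) or tubular (non-domestic polynomial growth case), and $G$ is an infinite cyclic admissible group of $K$-linear automorphisms of $\widehat{B}$.

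First I would fix such a presentation $A\cong \widehat{B}\slash G$ and observe that the projection $F\colon\widehat{B}\ra A$ is a Galois $G$-covering in the sense of Definition \ref{dg}. Since $G$ is infinite cyclic, it is torsion-free, and by Theorem \ref{s1} the category $\widehat{B}$ is locally support-finite in both cases (tilted of Euclidean type, and tubular). Furthermore, $\widehat{B}$ is intervally finite, as repetitive categories of finite dimensional algebras are (every interval in $\widehat{B}$ lies inside finitely many consecutive copies of $B$). This places us exactly in the hypotheses of Theorem \ref{t8}, which yields
\[
\KG(A)=\KG(\widehat{B}).
\]

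Next I would invoke Theorem \ref{s3} to compute $\KG(\widehat{B})$. In case $(1)$, $B$ is tilted of Euclidean type, so Theorem \ref{s3}(2) gives $\KG(\widehat{B})=2$, whence $\KG(A)=2$. In case $(2)$, $B$ is a tubular algebra, so Theorem \ref{s3}(3) gives $\KG(\widehat{B})=\infty$, whence $\KG(A)=\infty$. This completes both statements.

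The main obstacle is really just an appeal to the external classification result: one must know that \emph{every} representation-infinite standard self-injective algebra of polynomial growth admits a presentation as an orbit algebra $\widehat{B}\slash G$ of the required form with $G$ infinite cyclic admissible. Once this structural input is available, the rest is a straightforward chaining of Theorem \ref{t8} (which, crucially, requires the torsion-freeness of $G$, the local support-finiteness of $\widehat{B}$, and its interval finiteness) with Theorem \ref{s3}. No further analysis of the covering functor $\Phi_{F}\colon\CF(\widehat{B})\ra\CF(A)$ is needed, since Theorem \ref{t8} already packages the necessary preservation of Krull-Gabriel dimension in both directions.
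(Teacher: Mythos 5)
Your proposal is correct and follows essentially the same route as the paper: cite the Skowro\'nski classification of representation-infinite standard self-injective algebras of polynomial growth as orbit algebras $\widehat{B}\slash G$ with $B$ tilted Euclidean or tubular and $G$ infinite cyclic admissible, then chain Theorem \ref{t8} with Theorem \ref{s3}. Your explicit verification of the hypotheses of Theorem \ref{t8} (torsion-freeness, local support-finiteness via Theorem \ref{s1}, interval finiteness of $\widehat{B}$) is slightly more careful than the paper's one-line appeal, but the argument is the same.
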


\begin{proof} It follows from \cite{Sk0,Sk4} that representation-infinite standard self-injective algebras of polynomial growth are orbit algebras $\widehat{B}\slash G$ where $B$ is a tilted algebra of Euclidean type (then $\widehat{B}\slash G$ is representation-infinite domestic) or a tubular algebra (then $\widehat{B}\slash G$ nondomestic of polynomial growth) and $G$ is an infinite cyclic admissible group of $K$-linear automorphisms of $\widehat{B}$. Hence the assertions follows from Theorems \ref{t8} and \ref{s3}.
\end{proof}

It is worth to note that in \cite{Bu} the author gives the description of the \textit{Ziegler spectrum} (the space of isomorphism types of all indecomposable pure-injective modules \cite{Zi}) of the same class of self-injective algebras. His results imply \ref{s4} as well, but the methods used in \cite{Bu} are significantly different from ours.

\subsection{Cluster repetitive categories and cluster-tilted algebras}

This section is devoted to describe the Krull-Gabriel dimension of cluster repetitive categories and cluster-tilted algebras, see \ref{e44} for definitions and also \cite{As}. We base on the results from \cite{J-PP1} which in particular apply Theorem \ref{s3}.

Assume that $A,B$ are algebras and $\varphi\colon \mod(A)\ra\mod(B)$ is a $K$-linear additive covariant functor. Let $\CA=\mod(A)$ and $\CB=\mod(B)$. In general, the pull-up functor $\varphi_{\bullet}:\MOD(\CB)\ra\MOD(\CA)$ along $\varphi$ does not restrict to categories of finitely presented functors, that is, $\varphi_{\bullet}|_{\CF(B)}:\CF(B)\ra\mod(\CA)$ but $\Im(\varphi_{\bullet}|_{\CF(B)})\subsetneq\CF(A)$. This happens already for pull-ups along push-downs $F_{\lambda}$. Indeed, assume that $F:R\ra A$ is a Galois $G$-covering and consider the functor $\Psi=(F_{\lambda})_{\bullet}:\MOD(\CA)\ra\MOD(\CR)$. If $U\in\CF(A)$, then $$\Psi(U)(X)=U(F_{\lambda}(X))\cong U(F_{\lambda}({}^{g}X))=\Psi(U)({}^{g}X),$$ for any indecomposable $R$-module $X$ and $g\in G$. If $G$ is infinite and $\Psi(U)(X)\neq 0$, we conclude that $\Psi(U)({}^{g}X)\neq 0$ for infinite number of $g\in G$. It is easy to see that this cannot happen in case $\Psi(U)$ is a quotient of a hom-functor, because $G$ acts freely on the objects of $R$, see \cite[Lemma 5.1]{P4} for details.

An important example of a pull-up functor whose image lies in the category of finitely presented functors is considered in \cite[Theorem 1.3]{P6}. Indeed, it allows to show that $\KG(A)\leq\KG(R)$ holds under assumptions of Theorem \ref{t8}. This functor is also dense. We say that $\varphi\colon \mod(A)\ra\mod(B)$ is \textit{admissible} if and only if $\varphi$ is dense and $\Im(\varphi_{\bullet}|_{\CF(B)})\subseteq\CF(A)$,
that is, $U\circ\varphi$ is finitely presented, for any $U\in\CF(A)$. Admissible functors are particularly useful in the study of Krull-Gabriel dimension, because if a functor $\varphi\colon \mod(A)\ra\mod(B)$ is admissible, then $\KG(B)\leq\KG(A)$, see \cite[Proposition 2.1]{J-PP1}

The following fact is proved in \cite[Theorem 2.4]{J-PP1} and shows another important examples of admissible functors, this time related with contravariantly finite classes of modules. We apply it in the sequel.

\begin{thm} \label{s5}
Assume that $\varphi\colon\mod(A)\ra\mod(B)$ is a $K$-linear covariant full and dense functor. Assume that there is a
contravariantly finite class of modules $\CR_{\varphi}\subseteq\mod(A)$ such that $\Ker(\varphi)$ equals the class of all homomorphisms in $\mod(A)$
which factorize through $\CR_{\varphi}$. Then $\varphi\colon\mod(A)\ra\mod(B)$ is admissible and thus $\KG(B)\leq\KG(A)$.
\end{thm}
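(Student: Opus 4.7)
The plan is to verify the two defining properties of admissibility: density is already hypothesized, so the real task is to show $\Im(\varphi_{\bullet}|_{\CF(B)})\subseteq\CF(A)$. Once this is in place, the conclusion $\KG(B)\leq\KG(A)$ follows from the general fact about admissible functors (referenced as \cite[Proposition 2.1]{J-PP1} just above the statement). So the entire weight of the proof is concentrated on constructing, for an arbitrary finitely presented functor $U\in\CF(B)$, a finite projective presentation of $U\circ\varphi$ in $\mod(\CA)$ with $\CA=\mod(A)$.

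To set this up, I would start with a minimal projective presentation ${}_{B}(-,M)\xra{{}_{B}(-,g)}{}_{B}(-,N)\ra U\ra 0$ in $\CF(B)$ and, using density plus fullness of $\varphi$, choose $M',N'\in\mod(A)$ and $g'\colon M'\ra N'$ with $\varphi(M')\cong M$, $\varphi(N')\cong N$ and $\varphi(g')=g$. For $X\in\mod(A)$ one then has a commutative square whose vertical maps $\varphi_{X,M'}\colon{}_{A}(X,M')\ra{}_{B}(\varphi(X),M)$ and $\varphi_{X,N'}\colon{}_{A}(X,N')\ra{}_{B}(\varphi(X),N)$ are surjective by fullness. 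A straightforward diagram chase gives $U(\varphi(X))\cong{}_{A}(X,N')/(\Im{}_{A}(X,g')+\Ker\varphi_{X,N'})$, so we must control $\Ker\varphi_{X,N'}$, i.e.\ the space of $h\colon X\ra N'$ with $\varphi(h)=0$.

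Here the hypothesis enters: by assumption such $h$ are exactly the morphisms factoring through $\CR_{\varphi}$. The main obstacle is to replace this potentially unbounded family of factorizations by a single module-theoretic datum, and this is precisely what contravariant finiteness is designed to do. I would take a right $\CR_{\varphi}$-approximation $\alpha_{N'}\colon M_{N'}\ra N'$ and observe that any $h\colon X\ra N'$ factoring through some $R\in\CR_{\varphi}$, say as $h=\beta\gamma$ with $\beta\colon R\ra N'$, can be rewritten as $h=\alpha_{N'}\delta\gamma$ using $\beta=\alpha_{N'}\delta$ from the approximation property. Thus $\Ker\varphi_{X,N'}=\Im{}_{A}(X,\alpha_{N'})$, and the sum $\Im{}_{A}(X,g')+\Im{}_{A}(X,\alpha_{N'})$ is the image of ${}_{A}(X,[g',\alpha_{N'}])$, where $[g',\alpha_{N'}]\colon M'\oplus M_{N'}\ra N'$.

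Combining these computations yields a natural isomorphism $U\circ\varphi\cong\Coker{}_{A}(-,[g',\alpha_{N'}])$, exhibiting $U\circ\varphi$ as a cokernel of a morphism between representable functors on $\mod(A)$ and hence as an object of $\CF(A)$. This proves admissibility, after which \cite[Proposition 2.1]{J-PP1} gives $\KG(B)\leq\KG(A)$ and completes the proof. The only genuinely nontrivial step is the passage from the factorization description of $\Ker\varphi$ to the single map $\alpha_{N'}$; everything else is bookkeeping on cokernels of hom-functors.
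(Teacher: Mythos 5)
Your proof is correct and follows essentially the same route as the paper's: the key step in both is that the right $\CR_{\varphi}$-approximation $\alpha$ converts the factorization description of $\Ker(\varphi)$ into the single identity $\Ker\varphi_{X,-}=\Im{}_{A}(X,\alpha)$, yielding a finite projective presentation. The only cosmetic difference is that the paper first presents ${}_{B}(\varphi(-),\varphi(N))$ as $\Coker{}_{A}(-,\alpha_{N})$ and then invokes the abelianness of $\CF(A)$ to handle the cokernel $U\circ\varphi$, whereas you assemble the combined presentation $\Coker{}_{A}(-,[g',\alpha_{N'}])$ in one step.
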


\begin{proof} Observe that if $U\in\CF(B)$ and ${}_{B}(-,X)\xrightarrow{_{B}(-,f)}{}_{B}(-,Y)\ra U\ra 0$ is exact, then we get the exact sequence $${}_{B}(\varphi(-),X)\xrightarrow{_{B}(\varphi(-),f)}{}_{B}(\varphi(-),Y)\ra U\circ\varphi\ra 0.$$ Hence it is enough to show that a functor ${}_{B}(\varphi(-),Z)\colon \mod(A)\ra\mod(K)$ belongs to $\CF(A)$, for any $Z\in\mod(B)$, because the category $\CF(A)$ is abelian. Since the functor $\varphi\colon \mod(A)\ra\mod(B)$ is dense, it is sufficient to show that ${}_{B}(\varphi(-),\varphi(N))\in\CF(A)$, for any fixed $N\in\mod(A)$. 

It is easy to see that $\wt{\varphi}=(\wt{\varphi}_{X})_{X\in\mod(A)}$ where $\wt{\varphi}_{X}\colon {}_{A}(X,N)\ra{}_{B}(\varphi(X),\varphi(N))$ is given by
the formula $\wt{\varphi}_{X}(f)=\varphi(f)$ is a natural transformation
${}_{A}(-,N)\ra{}_{B}(\varphi(-),\varphi(N))$. Assume that $\alpha_{N}\colon M_{N}\ra N$ is a right $\CR_{\varphi}$ approximation of the module $N$, for some $M_{N}\in\CR_{\varphi}$. Then the sequence
$${}_{A}(-,M_{N})\xrightarrow{_{A}(-,\alpha_{N})}{}_{A}(-,N)\stackrel{\wt{\varphi}}{\longrightarrow}{}_{B}(\varphi(-),\varphi(N))\ra 0$$
is exact, see the proof of \cite[Theorem 2.4]{J-PP1} for details. This implies that the functor $\varphi\colon \mod(A)\ra\mod(B)$ is admissible $\KG(B)\leq\KG(A)$.
\end{proof}

Assume that $\CR\subseteq\mod(A)$ is some class of $A$-modules. If $T\in\CF(A)$, then the class $\supp_{\CR}(T)=\{X\in\CR\mid T(X)\neq 0\}$ is called
the \textit{$\CR$-support} of $T$. We shall call the class $\CR$ \textit{hom-support finite} (in short, \textit{hs-finite}) if and only if the
$\CR$-support of a hom-functor ${}_{A}(-,N)$ is finite, for any $N\in\mod(A)$. Denote by $\add(\CR)$ the class of all finite direct sums of modules from the class $\CR$. Is is proved in \cite[Lemma 2.5]{J-PP1} that $\add(\CR)$ is contravariantly finite if $\CR\subseteq\mod(A)$ is a hs-finite class of $A$-modules. This is well-known for finite subcategories of $\mod(A)$, see for example \cite[Proposition 4.2]{AuSm}. 

The following theorem summarizes some of the main results of \cite{J-PP1}. 

\begin{thm} \label{s6} Assume that $C$ is a tilted algebra and $\widehat{C},\check{C},\wt{C}$ are the associated repetitive category, cluster repetitive category
            and cluster-tilted algebra, respectively. The following assertions hold.
\begin{enumerate}[\rm(1)]
	\item There exists a functor $\phi\colon \mod(\widehat{C})\ra\mod(\check{C})$ which is admissible.
	\item We have $\KG(\wt{C})=\KG(\check{C})\leq\KG(\widehat{C})$.
\end{enumerate}
\end{thm}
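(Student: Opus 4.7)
The plan is to first establish part (1) by constructing an admissible functor $\phi:\mod(\widehat{C})\ra\mod(\check{C})$ via Theorem \ref{s5}, and then derive the assertions of (2) by combining (1) with the Galois covering results from Sections 3 and 5.1.

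For part (1), I would construct $\phi$ by exploiting the common structural scaffolding of the matrix categories $\widehat{C}$ and $\check{C}$ described in Example \ref{e44}. Both have the same ``diagonal'' entries isomorphic to $C$ but different off-diagonal bimodules, namely $DC$ for $\widehat{C}$ and $\Ext_{C}^{2}(DC,C)$ for $\check{C}$. Viewing modules as indexed families $(M_{i})_{i\in\ZZ}$ of $C$-modules equipped with structure maps prescribed by the bimodules, one defines $\phi$ by retaining each $M_{i}$ and converting the $DC$-structure maps to $\Ext_{C}^{2}(DC,C)$-structure maps via a natural $C$-$C$-bimodule morphism, which exists for tilted $C$ by a tilting-theoretic argument (using that $C$ has global dimension at most $2$). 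Fullness and density of $\phi$ would then follow from the fact that every $\check{C}$-module extends canonically to a $\widehat{C}$-module by pulling back the structure maps along this bimodule morphism.

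To invoke Theorem \ref{s5}, I would identify a class $\CR_{\phi}\subseteq\mod(\widehat{C})$ of ``column modules'' —essentially those $\widehat{C}$-modules whose structure maps are annihilated by the bimodule morphism described above — so that the kernel of $\phi$ on morphisms consists exactly of the homomorphisms factoring through $\CR_{\phi}$. Showing that $\CR_{\phi}$ is hs-finite in the sense of the discussion preceding the theorem would yield contravariant finiteness of $\add(\CR_{\phi})$ by the cited lemma from \cite{J-PP1}. Theorem \ref{s5} would then give admissibility of $\phi$, and with it the inequality $\KG(\check{C})\leq\KG(\widehat{C})$.

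For part (2), the inequality $\KG(\check{C})\leq\KG(\widehat{C})$ is the admissibility consequence of (1). For the equality $\KG(\wt{C})=\KG(\check{C})$, I would use the Galois covering $\check{C}\ra\check{C}/\langle\nu\rangle\cong\wt{C}$ with torsion-free group $\langle\nu\rangle\cong\ZZ$ from Example \ref{e44}: Theorem \ref{t3} gives $\KG(\check{C})\leq\KG(\wt{C})$, and the reverse inequality follows from Theorem \ref{t8} once we verify that $\check{C}$ is locally support-finite and intervally finite. Intervally finiteness is immediate from the band matrix shape of $\check{C}$; locally support-finiteness for cluster repetitive categories of tilted algebras is the cluster analogue of the repetitive case recalled in Theorem \ref{s1} and is known from the literature on cluster-tilted algebras \cite{ABS,ABS2}.

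The main obstacle is the admissibility part of (1) — in particular, pinning down the bimodule transition between $DC$ and $\Ext_{C}^{2}(DC,C)$, defining $\phi$ functorially from it, and then showing that $\Ker(\phi)$ is described precisely by factorizations through the class $\CR_{\phi}$. The hs-finiteness of $\CR_{\phi}$, which underlies the contravariant finiteness needed by Theorem \ref{s5}, will be the technically delicate step and where most of the book-keeping of the proof in \cite{J-PP1} is spent.
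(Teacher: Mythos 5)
Your part (2) follows essentially the paper's route: the equality $\KG(\wt{C})=\KG(\check{C})$ comes from the Galois covering $\check{C}\ra\wt{C}$ with group $\langle\nu\rangle\cong\ZZ$ together with the existence of a fundamental domain in $\check{C}$ (i.e.\ Theorem \ref{t8}), and $\KG(\check{C})\leq\KG(\widehat{C})$ is exactly the admissibility consequence of (1). One caveat: local support-finiteness of $\check{C}$ is not ``the cluster analogue'' of Theorem \ref{s1} --- for repetitive categories it fails outside the Dynkin/Euclidean/tubular cases, whereas for cluster repetitive categories it holds for \emph{every} tilted $C$, including wild ones. This is a separate fact, established in \cite{J-PP1} via the existence of a fundamental domain of $\check{C}$, and cannot be imported from the repetitive picture.

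The genuine gap is in part (1). The functor $\phi\colon\mod(\widehat{C})\ra\mod(\check{C})$ is not obtained by ``converting $DC$-structure maps to $\Ext^{2}_{C}(DC,C)$-structure maps along a natural bimodule morphism'': no such natural morphism between these bimodules is available in general, and a functor defined by pulling back structure maps in the matrix presentation would have no reason to be full or dense, nor would its kernel admit the required description. The functor actually used is the one constructed by Assem, Br\"ustle and Schiffler in \cite{ABS} via the triangle equivalence between the stable module category of $\widehat{C}$ and $D^{b}(\mod C)$ and the projection onto the cluster category; what \cite[Lemma 8, Theorem 9]{ABS} supplies is precisely that $\phi$ is full and dense and that $\Ker(\phi)$ is the ideal of homomorphisms factoring through $\add(\CK_{C})$ for the explicit class $\CK_{C}=\{\widehat{P}_{x},\ \tau^{1-i}\Omega^{-i}(C)\mid x\in(\widehat{C})_{0},\ i\in\ZZ\}$. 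This class --- the projectives together with the $\tau$- and syzygy-shifted copies of $C$ itself --- is not your class of ``column modules'' and is not visible from the band-matrix shapes of $\widehat{C}$ and $\check{C}$. The remaining content of the proof is the verification that $\CK_{C}$ is hs-finite (\cite[Proposition 3.3]{J-PP1}), which then feeds into Theorem \ref{s5}. So your skeleton (full $+$ dense $+$ kernel generated by a contravariantly finite class, then Theorem \ref{s5}) is the right one, but the two concrete inputs --- the identity of $\phi$ and the identity of the kernel-generating class --- must be taken from \cite{ABS} and cannot be recovered by the bimodule-transition construction you propose.
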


\begin{proof} $(1)$ Denote by $\CK_{C}$ the set $$\{\widehat{P}_{x},\tau^{1-i}\Omega^{-i}(C)\mid x\in(\widehat{C})_{0},i\in\ZZ\}$$ of modules from
$\mod(\widehat{C})$ where $\widehat{P}_{x}$ is an indecomposable projective $\widehat{C}$-module at the vertex $x\in(\widehat{C})_{0}$, $\tau=\tau_{\widehat{C}}$ the Auslander-Reiten translation in $\mod(\widehat{C})$ and $\Omega$ the syzygy functor. It is shown in \cite[Lemma 8, Theorem 9]{ABS} that there is full and dense $K$-linear functor $\phi:\mod(\widehat{C})\ra\mod(\check{C})$ such that $\Ker(\phi)$ equals the class of all homomorphisms in $\mod(\widehat{C})$ which factorize through $\add(\CK_{C})$. We prove in \cite[Proposition 3.3]{J-PP1} a technical fact that $\CK_{C}$ is hs-finite, so the functor $\phi\colon\mod(\widehat{C})\ra\mod(\check{C})$ is admissible by \ref{s5}.

$(2)$ We show in \cite[Theorem 3.1]{J-PP1} that there exists a fundamental domain $B$ of $\check{C}$ and hence $\KG(\check{C})=\KG(\widetilde{C})$ by \ref{s2}. By $(1)$ and \ref{s5} we conclude that $\KG(\check{C})\leq\KG(\widehat{C})$.
\end{proof}
 
In a sense, the following theorem refines Theorem \ref{s3}. It it the main result of \cite{J-PP1}, see Theorem 5.6.

\begin{thm} \label{s7} Assume that $K$ is an algebraically closed field, $C$ is a tilted $K$-algebra and $\widehat{C},\check{C},\wt{C}$ are the associated repetitive category, cluster repetitive category and cluster-tilted algebra, respectively. Then $\KG(\wt{C})=\KG(\check{C})=\KG(\widehat{C})\in\{0,2,\infty\}$ and the following assertions hold.
\begin{enumerate}[\rm(1)]
	\item $C$ is tilted of Dynkin type if and only if $\KG(\wt{C})=0$.
	\item $C$ is tilted of Euclidean type if and only if $\KG(\wt{C})=2$.
	\item $C$ is tilted of wild type if and only if $\KG(\wt{C})=\infty$.
\end{enumerate} In particular, a cluster-tilted algebra $\wt{C}$ has finite Krull-Gabriel dimension if and only if $\wt{C}$ is of domestic representation type.
\end{thm}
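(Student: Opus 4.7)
The plan is to split into three cases by the representation type of the tilted algebra $C$ (Dynkin, Euclidean, wild) and verify in each case that $\KG(\wt{C}) = \KG(\check{C}) = \KG(\widehat{C})$ takes the value $0$, $2$, or $\infty$ respectively. Theorem \ref{s6}(2) already supplies the chain $\KG(\wt{C}) = \KG(\check{C}) \leq \KG(\widehat{C})$, so what remains is to pin down the upper end and then produce a matching lower bound. My workhorse for the lower bounds will be Lemma \ref{0l2}(1) together with the observation, visible in the matrix description of Example \ref{e44}, that $C$ sits as a convex subcategory of both $\widehat{C}$ and $\check{C}$.

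In the Dynkin case, the classical result (e.g.\ \cite{AHR}) that $\widehat{C}$ is locally representation-finite implies that any finite convex subcategory of $\widehat{C}$ is representation-finite, hence of Krull-Gabriel dimension zero by Auslander's theorem; combined with Lemma \ref{0l2}(1) this gives $\KG(\widehat{C}) = 0$ and the chain collapses. In the Euclidean case, Theorem \ref{s3}(2) yields $\KG(\widehat{C}) = 2$ and hence $\KG(\wt{C}) \leq 2$ by Theorem \ref{s6}(2), while for the matching lower bound the algebra $C$ itself is representation-infinite domestic and cycle-finite (\cite{AsSk1,Sk0,ANSk}), so Theorem \ref{s2} yields $\KG(C) = 2$; the convex embedding $C \hookrightarrow \check{C}$ and Lemma \ref{0l2}(1) then upgrade this to $\KG(\check{C}) \geq 2$, closing the loop.

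In the wild case the same convex embedding $C \hookrightarrow \check{C}$ reduces the statement to the claim $\KG(C) = \infty$ whenever $C$ is a wild tilted algebra, which is exactly the ingredient already invoked in the proof of Theorem \ref{s3}(3): a wild finite-dimensional algebra has undefined Krull-Gabriel dimension. Once all three cases are settled, the final ``in particular'' assertion follows immediately from the classification of cluster-tilted algebras: $\wt{C}$ is representation-finite (so a fortiori domestic) when $C$ is Dynkin, representation-infinite domestic when $C$ is Euclidean, and non-domestic when $C$ is wild, matching the three values $0$, $2$, $\infty$ computed above.

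I expect the main obstacle to be the honest justification of the wild case: the Dynkin and Euclidean cases are essentially routine once Theorems \ref{s3}, \ref{s6} and Lemma \ref{0l2} are in hand, but the blanket implication ``$C$ wild $\Rightarrow \KG(C) = \infty$'' requires appealing to substantial external machinery on pure-injective modules rather than to anything internal to the covering-theoretic framework developed in this survey. A secondary technical point worth checking carefully is the precise sense in which $C$ is a \emph{convex} subcategory of $\check{C}$ (and of $\widehat{C}$), since Lemma \ref{0l2}(1) is applied to these embeddings in both the Euclidean and the wild steps; but this is transparent from the matrix description of $C_E$ recalled in Example \ref{e44}.
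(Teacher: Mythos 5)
Your proposal is correct, but the way you obtain the lower bounds differs genuinely from the paper's argument. The paper transfers everything \emph{down} to the cluster-tilted algebra $\wt{C}$: in the Euclidean case it notes that $\wt{C}$ is representation-infinite by \cite{BMR}, so $\KG(\wt{C})\neq 0$, and then excludes $\KG(\wt{C})=1$ by Krause's theorem \cite{Kr2} that no algebra has Krull-Gabriel dimension one, which forces $\KG(\wt{C})=2$ once the upper bound $\KG(\wt{C})=\KG(\check{C})\leq\KG(\widehat{C})=2$ is in place; in the wild case it uses that $\wt{C}$ is itself wild by \cite{BMR} and concludes $\KG(\wt{C})=\infty$ directly. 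You instead push the bound \emph{up} from the tilted algebra $C$, using that $C$ is a convex subcategory of $\check{C}$ (clear from the matrix description in Example \ref{e44}, since morphisms only connect adjacent levels) together with Lemma \ref{0l2}(1): in the Euclidean case you compute $\KG(C)=2$ from cycle-finiteness of Euclidean tilted algebras and Theorem \ref{s2} (note that Theorem \ref{s2} is to be read for representation-infinite algebras, which Euclidean tilted algebras are), and in the wild case you invoke $\KG(C)=\infty$ for wild algebras. Your route dispenses with Krause's no-dimension-one theorem and with the transfer of representation type to $\wt{C}$, at the cost of the convexity observation and the cycle-finiteness input -- both of which the survey already uses in the proof of Theorem \ref{s3}, so nothing new is needed. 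The reliance on ``wild implies $\KG=\infty$'' is the same external input in both arguments (the paper applies it to $\wt{C}$, you to $C$), so your worry about the wild case applies equally to the paper's own proof. One small point you leave implicit: the three forward implications become equivalences only because every tilted algebra is of exactly one of the three types and the three values $0,2,\infty$ are distinct; the paper states this step explicitly and you should too.
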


\begin{proof} We apply freely Theorem \ref{s3} and the fact that $\KG(\wt{C})=\KG(\check{C})\leq\KG(\widehat{C})$. We prove all
assertions simultaneously.

If $C$ is of Dynkin type, then $\KG(\wt{C})=\KG(\check{C})\leq\KG(\widehat{C})=0$, so $\KG(\wt{C})=\KG(\check{C})=\KG(\widehat{C})=0$. If $C$ is of Euclidean type, then $\KG(\wt{C})=\KG(\check{C})\leq\KG(\widehat{C})=2$, but $\KG(\wt{C})\neq 0$ since $\wt{C}$ is of infinite
representation type \cite{BMR} and $\KG(\wt{C})\neq 1$ by \cite{Kr2}. This yields $\KG(\wt{C})=\KG(\check{C})=\KG(\widehat{C})=2$. If $C$ is of wild type, then $\wt{C}$ is also of wild type by \cite{BMR} and hence we obtain $\KG(\wt{C})=\KG(\check{C})=\KG(\widehat{C})=\infty$.

Since the algebra $C$ is a tilted algebra either of Dynkin, or of Euclidean or of wild type, we conclude that the above implications can be replaced by
equivalences. This also shows that $\KG(\wt{C})=\KG(\check{C})=\KG(\widehat{C})\in\{0,2,\infty\}$. Moreover, if $C$ is tilted of Dynkin or
Euclidean type, then $\wt{C}$ is of domestic representation typ. This implies that the
class of cluster-tilted algebras supports the conjecture of Prest \ref{00c1}.
\end{proof}

\subsection{Algebras with strongly simply connected Galois-coverings}

In this section we are interested in a special class of algebras, namely, the tame algebras having strongly simply connected Galois coverings \cite{Sk3}. In this situation we consider Galois $G$-coverings $F:R\ra A$ for which the group $G$ is not torsion-free nor the push-down functor $F_{\lambda}$ is dense. Hence we are not allowed to apply Theorem \ref{t8} but only Theorem \ref{t3}. Recall that the class we consider is rather wide and contains, in particular, all special biserial algebras \cite[5.2]{DoSk}. Moreover, its properties suggest a new approach to the conjecture of Prest, see \cite[Section 7]{P6'} for a discussion of these matters. Here we base on Section 6 of \cite{P6'}.

We start with recalling some notions and facts needed to formulate Theorem \ref{t13} which is our main result. However, we view this reminder valuable since it contains fundamental concepts of covering theory. 

Assume that $R$ is a locally bounded $K$-category and $G$ is a group of $K$-linear automorphisms of $R$ acting freely on the objects of $R$. Given an $R$-module $M$ we denote by $G_{M}$ the \emph{stabilizer} $\{g\in G\mid {}^{g}M\cong M\}$ of $M$. An indecomposable $R$-module $M$ in $\Mod(R)$ is \textit{weakly $G$-periodic} \cite[2.3]{DoSk} if and only if $\supp (M)$ is infinite and $(\supp M)\slash G_{M}$ is finite. 

Assume that $D$ is a full subcategory of $R$ and $g\in G$. Then $gD$ denotes the full subcategory of $R$ formed by all objects $gx$, $x\in D$. The set $\{g\in G\mid gD=D\}$ is the \textit{stabilizer} $G_{D}$ of $D$. A \textit{line} in $R$ is a convex subcategory of $R$ which is isomorphic to the path category of a linear quiver, i.e. a quiver of type $\AA_{n}$, $\AA_{\infty}$ or ${}_{\infty}\AA_{\infty}$. A line $L$ is \textit{$G$-periodic} if and only if the stabilizer $G_{L}$ is nontrivial. Observe that in this case the quiver of the line $L$ is of the type ${}_{\infty}\AA_{\infty}$. It is well known that a $G$-periodic line $L$ in $R$ induces a \textit{band} in $R/G$, and every band induces a 1-parameter family of indecomposable $R/G$- modules, see for example \cite{DoSk}.

Assume that $L$ is a $G$-periodic line in $R$ {and let $Q_L$ be the full subquiver of $Q$, whose vertices are the objects of $L$}. Then the \textit{canonical} weakly $G$-periodic $R$-module is the module $M_{L}$ such that $M_{L}(x)=K$ for $x\in Q_{L}$, $M_{L}(x)=0$ for $x\notin Q_{L}$ and $M_{L}(\alpha)=id_{K}$ for any arrow $\alpha$ in $Q_{L}$. Note that $G_{M_{L}}\cong G_{L}\cong\ZZ$. If $M$ is a weakly $G$-periodic $R$-module and $M\cong M_{L}$ for some $G$-periodic line $L$ in $R$, then $M$ is called \textit{linear}.

Let us mention that the renowned result of \cite[Theorem 3.6]{DoSk} due to Dowbor and Skowro\'nski is widely used in representation theory. It's too technical to be included in this review but we recall its special version. This version is applied in \cite{Sk3} to obtain main results for algebras with strongly simply connected Galois coverings \cite[Theorem 2.4]{Sk3}.

Denote by $\CL_0$ a fixed set of representatives of all $G$-orbits of $G$-periodic lines in $R$. As usual, $K[T,T^{-1}]$ is the algebra of \emph{Laurent polynomials}. 

\begin{thm}\label{t15} Assume that $R$ is a locally bounded $K$-category over algebraically closed field $K$, $G$ an admissible group of $K$-linear automorphisms of $R$ and $F:R\ra A\cong R\slash G$ the Galois covering. Assume that the group $G$ acts freely on the isomorphism classes in $\ind(R)$ and every weakly $G$-periodic $R$-module is linear. The following assertions hold:
\begin{enumerate}[\rm(1)]
  \item Any module $Z\in\ind(A)$ of the second kind is of the form $V\otimes_{K[T,T^{-1}]}F_{\lambda}(M_{L})$ for some $L\in\CL_{0}$ and indecomposable finite dimensional $K[T,T^{-1}]$-module $V$.
  \item We have $$\Gamma_{A}=(\Gamma_{R}\slash G)\vee (\bigvee_{L\in\CL_{0}\snull}\Gamma_{K[T,T^{-1}]})$$ where $\Gamma_{K[T,T^{-1}]}$ denotes the Auslader-Reiten quiver of the category of finite dimensional $K[T,T^{-1}]$-modules.
\end{enumerate}
\end{thm}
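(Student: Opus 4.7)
The plan is to split $\ind(A)$ according to the dichotomy of Definition \ref{d1&2} and treat first-kind and second-kind indecomposables separately. For the first-kind part, the hypothesis that $G$ acts freely on $\ind(R)$ places us in the setting of Theorem \ref{0t1}(3), so $F_{\lambda}$ preserves indecomposability and induces an injection $\ind(R)\slash G\hookrightarrow\ind(A)$ whose image is exactly the first-kind locus. Combined with the fact recalled after Theorem \ref{0t1} that $F_{\lambda}$ preserves minimal almost split homomorphisms and Auslander-Reiten sequences, this identifies the union of AR-components of $A$ consisting of first-kind indecomposables with the orbit translation quiver $\Gamma_{R}\slash G$, providing the first summand in the wedge decomposition of $(2)$.

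The remaining second-kind part is where the hypothesis that every weakly $G$-periodic $R$-module is linear becomes decisive. For each $L\in\CL_{0}$, I would take the canonical weakly $G$-periodic module $M_L\in\Mod(R)$. Its stabilizer $G_{M_L}$ equals $G_L\cong\ZZ$, and a choice of generator $\sigma$ of $G_L$ together with a fixed isomorphism $M_L\cong{}^{\sigma}M_L$ endows $M_L$ with a $K[T,T^{-1}]$-action commuting with the $R$-action. Although $M_L$ itself is infinite-dimensional, the finiteness of $(\supp M_L)\slash G_{M_L}$ together with the formula $F_{\lambda}(M_L)(a)=\bigoplus_{g\in G}M_L(gx)$ collapses the orbit sums at each object $a$ into a finite-dimensional space, so $F_{\lambda}(M_L)\in\mod(A)$, and the $K[T,T^{-1}]$-structure descends to turn $F_{\lambda}(M_L)$ into a $K[T,T^{-1}]$-$A$-bimodule. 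The assignment $\Theta_{L}(V)=V\otimes_{K[T,T^{-1}]}F_{\lambda}(M_L)$ then defines a $K$-linear functor $\Theta_{L}\colon\mod(K[T,T^{-1}])\ra\mod(A)$.

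Next I would verify that each $\Theta_L$ sends indecomposables to indecomposable second-kind $A$-modules, and that the collection $\{\Theta_L\}_{L\in\CL_0}$ is exhaustive on the second-kind locus. Indecomposability of $\Theta_L(V)$ follows by computing its endomorphism ring via the adjunction $(F_\lambda,F_\bullet)$ together with the bimodule structure, which reduces it to $\End_{K[T,T^{-1}]}(V)$, local for indecomposable $V$. Second-kindness follows because the pull-up $F_\bullet(\Theta_L(V))$ contains an indecomposable summand of infinite $R$-support (essentially the $V$-isotypic part of $M_L$), whereas by Theorem \ref{0t1}(2) the pull-up of any first-kind module decomposes as a direct sum of finite-dimensional $R$-translates. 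Exhaustiveness is the main obstacle: given $Z\in\ind(A)$ of second kind, one decomposes $F_{\bullet}(Z)\in\Mod(R)$ into indecomposables, argues that each summand must have infinite support (otherwise $Z$ would be of first kind), observes that each is weakly $G$-periodic since $(\supp F_\bullet Z)\slash G$ is finite, and then invokes the linearity hypothesis to identify every summand with a $G$-translate of some $M_L$, $L\in\CL_0$; the residual $G_L$-action on the multiplicity spaces assembles into a finite-dimensional $K[T,T^{-1}]$-module $V$ realizing $Z\cong\Theta_L(V)$.

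Finally, assertion $(2)$ is assembled by combining the two classifications. The first-kind components contribute $\Gamma_{R}\slash G$, and for each $L\in\CL_0$ the functor $\Theta_L$ transports the Auslander-Reiten quiver $\Gamma_{K[T,T^{-1}]}$ onto a family of homogeneous-tube components of $\Gamma_A$. Disjointness across different $L\in\CL_0$ follows from the uniqueness part of the previous paragraph, since non-$G$-conjugate $G$-periodic lines yield non-isomorphic weakly $G$-periodic $R$-modules and hence disjoint families of second-kind indecomposables over $A$. The hardest technical step throughout is the exhaustiveness statement, which requires the delicate infinite-dimensional analysis of pull-ups carried out in \cite{DoSk}; the linearity hypothesis on weakly $G$-periodic modules is precisely what is needed to reduce that analysis to a one-parameter $K[T,T^{-1}]$-classification.
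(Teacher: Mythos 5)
The paper offers no proof of Theorem \ref{t15} to compare against: it is quoted as a special version of \cite{DoSk}*{Theorem 3.6} and explicitly declared ``too technical to be included in this review.'' Your overall strategy is nonetheless the one Dowbor and Skowro\'nski actually follow — split $\ind(A)$ by the first/second kind dichotomy, identify the first-kind locus with $\Gamma_{R}\slash G$ via the preservation of almost split sequences by $F_{\lambda}$, and parametrize the second-kind locus by the functors $V\mapsto V\otimes_{K[T,T^{-1}]}F_{\lambda}(M_{L})$ — so the architecture is right.

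There is, however, one step that is simply false as written: $F_{\lambda}(M_{L})$ is \emph{not} a finite dimensional $A$-module. For $x\in\ob(L)$ the $G$-orbit of $x$ contains the infinite $G_{L}$-orbit $G_{L}x\subseteq\supp(M_{L})$, so $F_{\lambda}(M_{L})(F(x))=\bigoplus_{g\in G}M_{L}(gx)$ already contains $\bigoplus_{h\in G_{L}}K$ and is infinite dimensional; the finiteness of $(\supp M_{L})\slash G_{L}$ does not ``collapse'' these sums. What that finiteness actually buys is that $F_{\lambda}(M_{L})$ is a finitely generated (indeed free of finite rank) left module over $K[T,T^{-1}]\cong KG_{L}$, and it is this finite generation over $K[T,T^{-1}]$ — not finite dimension over $K$ — that forces $V\otimes_{K[T,T^{-1}]}F_{\lambda}(M_{L})$ to land in $\mod(A)$ for finite dimensional $V$. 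Beyond this repairable error, the genuinely hard points are asserted rather than argued: that $F_{\bullet}(Z)$, a merely locally finite dimensional object of $\Mod(R)$, admits a $G$-stable decomposition into indecomposables with local endomorphism rings; that the isomorphisms of Theorem \ref{0t1}(4) extend to the infinite dimensional modules $M_{L}$ so that $\End_{A}(V\otimes_{K[T,T^{-1}]}F_{\lambda}(M_{L}))$ really reduces to $\End_{K[T,T^{-1}]}(V)$; and that the functors $V\mapsto V\otimes_{K[T,T^{-1}]}F_{\lambda}(M_{L})$ preserve almost split sequences, which is what transports $\Gamma_{K[T,T^{-1}]}$ onto whole components of $\Gamma_{A}$ and gives the disjointness implicit in the symbol $\vee$. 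These are exactly the technical core of \cite{DoSk} that the survey declines to reproduce, so your sketch should be read as a correct roadmap with one wrong signpost rather than a proof.
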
 We recall that if $L\in\CL_0$, then $G_{M_{L}}=G_{L}\cong\ZZ$. Hence the group algebra $KG_{L}$ is isomorphic with $K[T,T^{-1}]$ and the canonical action of $G_{L}$ on $L$ gives a left $K[T,T^{-1}]$-module structure on the module $F_{\lambda}(M_{L})$. We refer to \cite{DoSk} for the details. It is worth to note that if $F:R\ra A$ is a universal Galois $G$-covering of a \emph{string algebra} $A$, then $R$ and $G$ satisfy conditions of the above theorem. This in particular means that modules of the first kind are exactly the \emph{string modules} whereas modules of the second kind are exactly the \emph{band modules}. 

We give more details about strongly simply connected algebras and $K$-categories. Assume that $A=kQ\slash I$ is a triangular algebra. Then $A$ is \textit{strongly simply connected} \cite{Sk2} if and only if the first Hochschild cohomology group $H^{1}(C,C)$ vanishes, for any convex subcategory $C$ of $A$. The classes of \textit{hypercritical} and \textit{pg-critical} algebras play a prominent role in the representation theory of strongly simply connected algebras. We refer to \cite{NoSk} for details on these classes. Here we only mention that hypercritical algebras are strictly wild and are classified by quivers and relations in \cite{Un}. In turn, pg-critical algebras are tame of non-polynomial growth \cite[Proposition 2.4]{Sk1} and are classified by quivers and relations in \cite{NoSk}. It follows by \cite[Theorem 7.1]{KaPa2} that hypercritical algebras and pg-critical algebras both have Krull-Gabriel dimension undefined, see also \cite[10.3]{Pr} and \cite{P5,GP} for wild algebras in general.

Assume that $R=\und{kQ}\slash I$ is a triangular locally bounded $k$-category. Then $R$ is \textit{strongly simply connected} if and only if the following two conditions are satisfied: 
\begin{itemize}
  \item $R$ is intervally finite,
  \item every finite convex subcategory of $R$ is strongly simply connected.
\end{itemize}

Tame algebras having strongly simply connected Galois coverings are studied in depth by A. Skowro\'nski in \cite{Sk3}. We only mention the following main result of this paper which we apply in the sequel. 

\begin{thm}\label{t12}\textnormal{(\cite[Theorem 2.6]{Sk3})} Assume that $R$ is a strongly simply connected locally bounded $K$-category, $G$ an admissible group of $K$-linear automorphisms of $R$ and let $A=R\slash G$. Then the following assertions hold:
\begin{enumerate}[\rm(1)]
  \item The algebra $A$ is of polynomial growth if and only if the category $R$ does not contain a convex subcategory which is hypercritical or pg-critical, and the number of $G$-orbits of $G$-periodic lines in $R$ is finite.
  \item The algebra $A$ is of domestic type if and only if the category $R$ does not contain a convex subcategory which is hypercritical, pg-critical or tubular, and the number of $G$-orbits of $G$-periodic lines in $R$ is finite.
\end{enumerate}
\end{thm}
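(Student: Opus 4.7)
The plan is to derive this theorem from the covering-theoretic decomposition of $\ind(A)$ into modules of the first and second kind. On the ``first kind'' side, the growth of $A$-modules in $\Im(F_{\lambda})$ is governed by the growth of indecomposables over $R$, which under strong simple connectedness reduces to the growth of finite convex subcategories of $R$. On the ``second kind'' side, Theorem \ref{t15} converts the count of one-parameter families into a count of $G$-orbits of $G$-periodic lines in $R$.

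For the necessity direction in both parts, I would argue contrapositively. If $R$ contained a hypercritical convex subcategory $C$, then $C$ is strictly wild; since $C$ is convex in $R$, Lemma \ref{0l2} makes $\mod(C)$ a contravariantly finite subcategory of $\mod(R)$, and composing the embedding with the push-down $F_{\lambda}:\mod(R)\ra\mod(A)$ transports the wildness of $C$ into $\mod(A)$, contradicting polynomial growth of $A$. A pg-critical convex subcategory is tame of non-polynomial growth and so obstructs polynomial growth of $A$ by the same mechanism, while for part (2) a tubular convex subcategory, being of polynomial but non-linear growth, obstructs domesticity. If in addition the set $\mathcal{L}_{0}$ of $G$-orbits of $G$-periodic lines in $R$ were infinite, Theorem \ref{t15}(2) would yield infinitely many pairwise independent one-parameter families of indecomposable $A$-modules arising from the canonical weakly $G$-periodic modules, again precluding even polynomial growth.

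For the sufficiency direction I would argue as follows. Assume the convex-subcategory and orbit-finiteness conditions are satisfied. Since $R$ is strongly simply connected, it is intervally finite, and every finite convex subcategory $B$ of $R$ is strongly simply connected and inherits the absence of hypercritical, pg-critical (and, for (2), tubular) convex subcategories. Known classification results for strongly simply connected algebras then force each such $B$ to be of polynomial growth (respectively of domestic type). Now decompose $\ind(A)$ along Theorem \ref{t15}: modules of the first kind are $F_{\lambda}$-images of $R$-modules, whose one-parameter families modulo $G$-action are controlled dimension-by-dimension by the finite convex subcategories of $R$ they are supported on, hence grow polynomially (resp.\ linearly) in the dimension; modules of the second kind form a finite union (indexed by $\mathcal{L}_{0}$) of one-parameter families of the form $V\otimes_{K[T,T^{-1}]}F_{\lambda}(M_{L})$. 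Adding the two contributions gives the desired growth estimate for $A$.

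The main obstacle is the sufficiency direction in this non-torsion-free and generally non-locally-support-finite setting. Theorem \ref{t8} is not available, so we cannot pass to a fundamental domain; instead, we must use the Dowbor--Skowro\'nski classification of second-kind indecomposables (Theorem \ref{t15}) together with a delicate count, across finite convex subcategories of $R$, of one-parameter families that survives the identifications made by the $G$-action. A subtle point is handling weakly $G$-periodic modules and ensuring that the torsion elements of $G$, which are permitted here, do not produce additional families of second-kind $A$-modules beyond those accounted for by $\mathcal{L}_{0}$.
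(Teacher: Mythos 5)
First, note that the paper does not actually prove this statement: it is quoted verbatim as \cite[Theorem 2.6]{Sk3} and used as a black box, so there is no internal proof to compare against. Your sketch does follow the broad strategy of Skowro\'nski's original argument (splitting $\ind(A)$ into first- and second-kind modules, controlling the first kind via finite convex subcategories of $R$ and the second kind via $G$-periodic lines), but as a proof it has genuine gaps. The most serious one you flag yourself without resolving: Theorem \ref{t15} is only available under the hypotheses that $G$ acts freely on the isomorphism classes in $\ind(R)$ and that every weakly $G$-periodic $R$-module is linear. Neither is part of the standing assumptions here ($G$ is merely admissible, so torsion is allowed), and establishing both for a strongly simply connected $R$ with no hypercritical or pg-critical convex subcategory is precisely the technical core of \cite{Sk3}; without it the classification of second-kind modules, and hence your entire sufficiency argument, does not get off the ground.

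The necessity direction is also not as routine as you present it. The claim that a hypercritical (strictly wild) convex subcategory $C$ of $R$ ``transports its wildness into $\mod(A)$'' by composing with $F_{\lambda}$ needs an argument: by Theorem \ref{0t1}(4) the push-down is only full and faithful up to the $G$-action, and with torsion present it need not even preserve indecomposability, so it is not a representation embedding on the nose; the standard route is through the tameness and growth comparison between $R$ and $A=R\slash G$ from \cite{DoSk}, not through a direct transport. Likewise, ``one-parameter families modulo the $G$-action are controlled dimension-by-dimension by finite convex subcategories'' hides the actual counting argument: one needs interval finiteness of $R$ and admissibility of $G$ to see that only finitely many $G$-orbits of convex supports occur in each dimension, and one must check that the $G$-identifications neither collapse nor duplicate families in a way that changes the growth class. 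As it stands, your text is a correct roadmap to Skowro\'nski's proof rather than a proof.
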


By applying Theorem \ref{t3} we obtain the following fact.

\begin{thm}\label{t13} Assume that $R$ is a strongly simply connected locally bounded $K$-category, $G$ an admissible group of $K$-linear automorphisms of $R$ and $A=R\slash G$. If $\KG(A)$ is finite, then $A$ is of domestic type.
\end{thm}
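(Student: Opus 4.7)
The plan is to argue by contrapositive: assume $A=R\slash G$ is not of domestic type, and deduce that $\KG(A)$ is infinite (undefined). The characterization of domesticity provided by Theorem \ref{t12}(2) gives a dichotomy for the failure of domesticity. Namely, if $A$ is not domestic, then at least one of the following must hold: (i) $R$ contains a convex subcategory $B$ which is hypercritical, pg-critical, or tubular; or (ii) the number of $G$-orbits of $G$-periodic lines in $R$ is infinite.

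Case (i) is the direct one. For each of the three bad classes one has $\KG(B)=\infty$: for tubular algebras this is Geigle's theorem (used already in the proof of Theorem \ref{s3}); for hypercritical algebras, which are strictly wild by \cite{Un}, and for pg-critical algebras, this follows from the results cited immediately before Theorem \ref{t12} (in particular, the Kasjan--Pastuszak theorem). Since $B$ is a convex subcategory of $R$, Lemma \ref{0l2}(1) yields $\KG(B)\le\KG(R)$, and Theorem \ref{t3} yields $\KG(R)\le\KG(A)$. Chaining these gives $\KG(A)=\infty$, as required.

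The main obstacle is case (ii), where the presence of infinitely many $G$-orbits of $G$-periodic lines does not a priori produce a bad convex subcategory of $R$. The cleanest route would be a reduction of (ii) to (i): to show that, under the strongly simply connected hypothesis on $R$, the existence of infinitely many $G$-orbits of $G$-periodic lines forces $R$ to contain a convex subcategory of tubular, hypercritical, or pg-critical type. This is where the strongly simply connected assumption is crucial, since it constrains the combinatorics of the bound quiver of $R$ and, together with admissibility of $G$, limits how periodic lines can be distributed without producing such a subcategory. I would pursue this reduction by analyzing the configurations of periodic lines in the convex hull of a growing family of orbit representatives, using the interval finiteness that follows from strong simple connectedness.

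If the reduction in case (ii) proves elusive, a fallback strategy is to work on the functor side. By Theorem \ref{t15}, each $G$-orbit of a $G$-periodic line $L$ in $R$ produces, via $F_\lambda(M_L)$, a one-parameter family of band modules over $A$ parametrized by $K[T,T^{-1}]$; infinitely many such orbits give infinitely many independent band families inside $\mod(A)$. One can then attempt to manufacture an explicit transfinite chain of subcategories in $\CF(A)$ by pairing these families with suitable hom-functors and taking cokernels, producing finitely presented functors whose classes in the Krull--Gabriel filtration strictly ascend, thereby certifying $\KG(A)=\infty$ directly. In either approach, the key input is Theorem \ref{t3}, which transfers infiniteness of $\KG(R)$ upstairs to $\KG(A)$.
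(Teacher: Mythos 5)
Your case (i) is exactly the paper's argument: a hypercritical, pg-critical or tubular convex subcategory $B$ of $R$ gives $\infty=\KG(B)\leq\KG(R)\leq\KG(A)$ via Lemma \ref{0l2}(1) and Theorem \ref{t3}. The gap is in case (ii), which is where the real content of the theorem lies, and neither of your two strategies closes it. Your preferred route --- reducing (ii) to (i) by showing that infinitely many $G$-orbits of $G$-periodic lines forces a tubular, hypercritical or pg-critical convex subcategory of $R$ --- is not just elusive but almost certainly false: if it held, the second condition in Theorem \ref{t12} would be redundant, whereas it is stated as an independent obstruction precisely because non-domesticity coming from infinitely many band families need not be visible in any finite convex subcategory of $R$. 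Your fallback of building a transfinite chain in $\CF(A)$ from the band modules of Theorem \ref{t15} is not an argument but a programme; moreover Theorem \ref{t15} carries hypotheses (free action of $G$ on isomorphism classes in $\ind(R)$, linearity of all weakly $G$-periodic modules) that you have not verified in the present generality, and your closing remark that ``the key input is Theorem \ref{t3}'' is misleading here, since in case (ii) the infiniteness is not detected upstairs in $R$ at all.

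What the paper actually does in case (ii) is pass \emph{downstairs} to a factor algebra of $A$: assuming (without loss of generality) that $R$ contains no hypercritical or pg-critical convex subcategory, the results of \cite{KaPa3} (Lemma 6.1(3) and Theorem 6.2 there, together with their proofs) show that infinitely many $G$-orbits of $G$-periodic lines force $A=R/G$ to admit a factor algebra $C$ which is a string algebra of non-domestic type. Schr\"oer's theorem \cite{Sch2} gives $\KG(C)=\infty$, and since $C$ is a factor of $A$, Lemma \ref{0l2}(2) --- the part of that lemma about factor categories, which you did not invoke --- gives $\KG(C)\leq\KG(A)$, hence $\KG(A)=\infty$, a contradiction. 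Both failure modes excluded, Theorem \ref{t12}(2) yields domesticity. The missing ingredients in your proposal are therefore the specific Kasjan--Pastuszak construction of a non-domestic string factor algebra and Schr\"oer's computation of the Krull--Gabriel dimension of such algebras.
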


\begin{proof} Assume that $\KG(A)$ is finite. Then Theorem \ref{t3} implies that $\KG(R)$ is finite and we show that in this case $R$ does not contain a convex subcategory which is hypercritical, pg-critical or tubular, and the number of $G$-orbits of $G$-periodic lines in $R$ is finite. First recall that if $B$ is a convex subcategory of $R$, then $\KG(B)\leq\KG(R)$ by Lemma \ref{0l2}. Therefore, if $R$ contains a convex subcategory $B$ which is hypercritical or pg-critical, then $\infty=\KG(B)\leq\KG(R)$ and thus $\KG(R)=\infty$. We conclude the same if $R$ contains a convex subcategory which is tubular, because tubular algebras have Krull-Gabriel dimension undefined \cite{Geigle1985}. Finally, assume that the number of $G$-orbits of $G$-periodic lines in $R$ is infinite. Without loss of generality we assume that $R$ does not contain a convex subcategory which is hypercritical or pg-critical. Then it follows from \cite[Lemma 6.1 (3), Theorem 6.2]{KaPa3} and their proofs that $A=R\slash G$ has a factor algebra $C$ which is string of non-domestic type. Hence $\KG(C)=\infty$ by \cite{Sch2} and since $\KG(C)\leq\KG(A)$ by Lemma \ref{0l2} we get $\KG(A)=\infty$, contrary to our assumption that $\KG(A)$ is finite. These arguments show that we can apply Theorem \ref{t12} (2) and conclude that $A$ is of domestic type. 
\end{proof}

We believe that the converse of this theorem is valid, so Prest's conjecture holds for the class of algebras with strongly simply connected Galois coverings. We refer to \cite[Remark 6.4]{P6'} for discussion of this problem. 

\bibsection

\begin{biblist}

\bib{As}{article}{
   author={Assem, I.},
   title={A course on cluster tilted algebras},
   conference={
      title={Homological methods, representation theory, and cluster
      algebras},
   },
   book={
      series={CRM Short Courses},
      publisher={Springer, Cham},
   },
   date={2018},
   pages={127--176},
}

\bib{ABS2}{article}{
   author={Assem, I.},
   author={Br\"{u}stle, T.},
   author={Schiffler, R.},
   title={Cluster-tilted algebras as trivial extensions},
   journal={Bull. Lond. Math. Soc.},
   volume={40},
   date={2008},
   number={1},
   pages={151--162},
}

\bib{ABS}{article}{
   author={Assem, I.},
   author={Br\"{u}stle, T.},
   author={Schiffler, R.},
   title={On the Galois coverings of a cluster-tilted algebra},
   journal={J. Pure Appl. Algebra},
   volume={213},
   date={2009},
   number={7},
   pages={1450--1463},
}

\bib{AHR}{article}{
   author={Assem, I.},
   author={Happel, D.},
   author={Rold\'{a}n, O.},
   title={Representation-finite trivial extension algebras},
   journal={J. Pure Appl. Algebra},
   volume={33},
   date={1984},
   number={3},
   pages={235--242},
}	

\bib{ANSk}{article}{
   author={Assem, I.},
   author={Nehring, J.},
   author={Skowro\'{n}ski, A.},
   title={Domestic trivial extensions of simply connected algebras},
   journal={Tsukuba J. Math.},
   volume={13},
   date={1989},
   number={1},
   pages={31--72},
}

\bib{AsSiSk}{book}{
   author={Assem, I.},
   author={Simson, D.},
   author={Skowro\'{n}ski, A.},
   title={Elements of the representation theory of associative algebras.
   Vol. 1},
   series={London Mathematical Society Student Texts},
   volume={65},
   note={Techniques of representation theory},
   publisher={Cambridge University Press, Cambridge},
   date={2006},
}

\bib{AsSk1}{article}{
 AUTHOR = {Assem, I.},
 AUTHOR = {Skowro\'nski, A.},    
     TITLE = {Algebras with cycle-finite derived categories},
   JOURNAL = {Math. Ann.},
  FJOURNAL = {Mathematische Annalen},
    VOLUME = {280},
      YEAR = {1988},
    NUMBER = {3},
     PAGES = {441--463},

}

\bib{AsSk3}{article}{
   author={Assem, I.},
   author={Skowro\'{n}ski, A.},
   title={On some classes of simply connected algebras},
   journal={Proc. London Math. Soc.},
   volume={56},
   date={1998},
   pages={417--450},
}

\bib{AsSk4}{article}{
   author={Assem, I.},
   author={Skowro\'{n}ski, A.},
   title={On tame repetitive algebras},
   journal={Fund. Math.},
   volume={142},
   date={1993},
   number={1},
   pages={59--84},
}

\bib{Au}{collection.article}{
   author={Auslander, M.},
   title={A functorial approach to representation theory},
   book={
      title={Representations of Algebras},
      series={Lecture Notes in Math.},
      volume={944},
      publisher={Springer, Berlin-New York},
   },
   date={1982},
   pages={105--179},
}

\bib{Au0}{article}{

AUTHOR = {Auslander, M.},
     TITLE = {Functors and morphisms determined by objects},
 BOOKTITLE = {Representation theory of algebras ({P}roc. {C}onf., {T}emple
              {U}niv., {P}hiladelphia, {P}a., 1976)},
    SERIES = {Lect. Notes Pure Appl. Math.},
    VOLUME = {Vol. 37},
     PAGES = {1--244},
 PUBLISHER = {Dekker, New York-Basel},
      YEAR = {1978},
}

\bib{AuRe}{article}{
   author={Auslander, M.},
   author={Reiten, I.},
   title={Applications of contravariantly finite subcategories},
   journal={Adv. Math.},
   volume={86},
   date={1991},
   number={1},
   pages={111--152},
}

\bib{AuSm}{article}{
   author={Auslander, M.},
   author={Smal\o , S. O.},
   title={Preprojective modules over Artin algebras},
   journal={J. Algebra},
   volume={66},
   date={1980},
   number={1},
   pages={61--122},
}




\bib{BoGa}{article}{
   author={Bongartz, K.},
   author={Gabriel, P.},
   title={Covering spaces in representation-theory},
   journal={Invent. Math.},
   volume={65},
   date={1981/82},
   number={3},
   pages={331--378},
}
	
\bib{BrGa}{article}{
   author={Bretscher, O.},
   author={Gabriel, P.},
   title={The standard form of a representation-finite algebra},
   journal={Bull. Soc. Math. France},
   volume={111},
   date={1983},
   pages={21--40},
}


\bib{BMR}{article}{
   author={Buan, A. B.},
   author={Marsh, R. J.},
   author={Reiten, I.},
   title={Cluster-tilted algebras},
   journal={Trans. Amer. Math. Soc.},
   volume={359},
   date={2007},
   number={1},
   pages={323--332},
}

\bib{Bu}{article}{
author={Bushell, M.}
title={Ziegler Spectra of Self Injective Algebras of Polynomial Growth}
note={arXiv preprint: arXiv:1712.01575}
pages={1--50}

}

\bib{DoSk}{article}{
   author={Dowbor, P.},
   author={Skowro\'{n}ski, A.},
   title={Galois coverings of representation-infinite algebras},
   journal={Comment. Math. Helv.},
   volume={62},
   date={1987},
   number={2},
   pages={311--337},
}

\bib{DoSk2}{article}{
AUTHOR = {Dowbor, P.} 
author= {Skowro\'nski, A.},
     TITLE = {On the representation type of locally bounded categories},
   JOURNAL = {Tsukuba J. Math.},
  FJOURNAL = {Tsukuba Journal of Mathematics},
    VOLUME = {10},
      YEAR = {1986},
    NUMBER = {1},
     PAGES = {63--72},

}

\bib{DoLeSk}{article}{
author={Dowbor, P.},
author={Lenzing, H.}
author={Skowro\'{n}ski, A.},
TITLE = {Galois coverings by algebras of locally support-finite
              categories},
 BOOKTITLE = {Representation theory, {I} ({O}ttawa, {O}nt., 1984)},
    SERIES = {Lecture Notes in Math.},
    VOLUME = {1177},
     PAGES = {91--93},
 PUBLISHER = {Springer, Berlin},
      YEAR = {1986},

}








\bib{FPN}{article}{
AUTHOR = {Palmquist, J. Fisher},
author={Newell D. C.}
     TITLE = {Bifunctors and adjoint pairs},
   JOURNAL = {Trans. Amer. Math. Soc.},
  FJOURNAL = {Transactions of the American Mathematical Society},
    VOLUME = {155},
      YEAR = {1971},
     PAGES = {293--303},
      ISSN = {0002-9947,1088-6850},

}


\bib{Ga}{article}{
   author={Gabriel, P.},
   title={The universal cover of a representation-finite algebra},
   conference={
      title={Representations of algebras},
   },
   book={
      series={Lecture Notes in Math.},
      volume={903},
      publisher={Springer, Berlin-New York},
   },
   date={1981},
   pages={68--105},
}



\bib{Geigle1985}{article}{
   author={Geigle, W.},
   title={The Krull-Gabriel dimension of the representation theory of a tame hereditary Artin algebra and applications to the structure of exact sequences},
   journal={Manuscripta Math.},
   volume={54},
   date={1985},
   number={1-2},
   pages={83--106},
}

\bib{Ge2}{article}{
   author={Geigle, W.},
   title={Krull dimension and Artin algebras},
   book={
      title={Representation theory, I},
      series={Lecture Notes in Math.},
      volume={1177},
      publisher={Springer, Berlin},
   },
   date={1986},
   pages={135--155},
}

\bib{GP}{article}{
AUTHOR = {Gregory, L.}
author={Prest, M.},
     TITLE = {Representation embeddings, interpretation functors and
              controlled wild algebras},
   JOURNAL = {J. Lond. Math. Soc. (2)},
  FJOURNAL = {Journal of the London Mathematical Society. Second Series},
    VOLUME = {94},
      YEAR = {2016},
    NUMBER = {3},
     PAGES = {747--766},
}


\bib{HW}{article}{
   author={Hughes, D.},
   author={Waschb\"{u}sch, J.},
   title={Trivial extensions of tilted algebras},
   journal={Proc. London Math. Soc. (3)},
   volume={46},
   date={1983},
   number={2},
   pages={347--364},
}

\bib{IPTZ}{article}{
AUTHOR = {Igusa, K.}, 
author = {Platzeck, M. I.}, 
author = {Todorov G.},
author = {Zacharia, D.},
     TITLE = {Auslander algebras of finite representation type},
   JOURNAL = {Comm. Algebra},
  FJOURNAL = {Communications in Algebra},
    VOLUME = {15},
      YEAR = {1987},
    NUMBER = {1-2},
     PAGES = {377--424},
}

\bib{IT}{article}{
AUTHOR = {Igusa, K.}, 
author = {Todorov G.},
     TITLE = {A characterization of finite {A}uslander-{R}eiten quivers},
   JOURNAL = {J. Algebra},
  FJOURNAL = {Journal of Algebra},
    VOLUME = {89},
      YEAR = {1984},
    NUMBER = {1},
     PAGES = {148--177},

}

\bib{J-PP1}{article}{
   author={Jaworska-Pastuszak, A.},
   author={Pastuszak, G.},
   TITLE = {On {K}rull-{G}abriel dimension of cluster repetitive
              categories and cluster-tilted algebras},
      NOTE = {With an appendix by Grzegorz Bobi\'nski},
   JOURNAL = {J. Pure Appl. Algebra},
  FJOURNAL = {Journal of Pure and Applied Algebra},
    VOLUME = {229},
      YEAR = {2025},
    NUMBER = {1},
     PAGES = {Paper No. 107823, 15},
}





\bib{KaPa2}{article}{
AUTHOR = {Kasjan, S.}, 
author={Pastuszak, G.},
     TITLE = {On the existence of super-decomposable pure-injective modules
              over strongly simply connected algebras of non-polynomial
              growth},
   JOURNAL = {Colloq. Math.},
  FJOURNAL = {Colloquium Mathematicum},
    VOLUME = {136},
      YEAR = {2014},
    NUMBER = {2},
     PAGES = {179--220},

}

\bib{KaPa3}{article}{
   author={Kasjan, S.},
   author={Pastuszak, G.},
   title={Super-decomposable pure-injective modules over algebras with
   strongly simply connected Galois coverings},
   journal={J. Pure Appl. Algebra},
   volume={220},
   date={2016},
   number={8},
   pages={2985--2999},
}



\bib{Kr2}{article}{
   author={Krause, H.},
   title={Generic modules over Artin algebras},
   journal={Proc. London Math. Soc. (3)},
   volume={76},
   date={1998},
   number={2},
   pages={276--306},
}

\bib{Kr}{article}{
   author={Krause, H.},
   title={The spectrum of a module category},
   journal={Mem. Amer. Math. Soc.},
   volume={149},
   date={2001},
   number={707},
}




\bib{MP}{article}{
author={Martinez-Villa, R.}
author={de la Pe\~na, J. A.}
title={The universal cover of a quiver with relations}
journal={J. Pure. Appl. Algebra},
   volume={30},
   date={1983},
   pages={277--292},
}

\bib{Mi}{article}{
AUTHOR = {Mitchell, B.},
     TITLE = {Rings with several objects},
   JOURNAL = {Advances in Math.},
  FJOURNAL = {Advances in Mathematics},
    VOLUME = {8},
      YEAR = {1972},
     PAGES = {1--161},
      ISSN = {0001-8708},

}

\bib{NoSk}{article}{
AUTHOR = {N\"orenberg, R.}, 
author={Skowro\'nski, A.},
     TITLE = {Tame minimal non-polynomial growth simply connected algebras},
   JOURNAL = {Colloq. Math.},
  FJOURNAL = {Colloquium Mathematicum},
    VOLUME = {73},
      YEAR = {1997},
    NUMBER = {2},
     PAGES = {301--330},

}

\bib{P6'}{article}{
author={Pastuszak, G.}
title={Covering theory, functor categories and the Krull-Gabriel dimension}
note={arXiv preprint: arXiv:2502.17115 }
}

\bib{P4}{article}{
   author={Pastuszak, G.},
   title={On Krull-Gabriel dimension and Galois coverings},
   journal={Adv. Math.},
   volume={349},
   date={2019},
   pages={959--991},
}

\bib{P5}{article}{
   author={Pastuszak, G.},
   TITLE = {On wild algebras and super-decomposable pure-injective
              modules},
   JOURNAL = {Algebr. Represent. Theory},
  FJOURNAL = {Algebras and Representation Theory},
    VOLUME = {26},
      YEAR = {2023},
    NUMBER = {3},
     PAGES = {957--965},
}

\bib{P6}{article}{
   author={Pastuszak, G.},
   title={Corrigendum to "On Krull-Gabriel dimension and Galois coverings" [Adv. Math. 349 (2019) 959–991]},
   journal={Adv. Math.},
   volume={438},
   date={2024},
   pages={6 pp.},
}

\bib{Pog}{article}{
AUTHOR = {Pogorza\l y, Z.},
     TITLE = {On star-free bound quivers},
   JOURNAL = {Bull. Polish Acad. Sci. Math.},
  FJOURNAL = {Bulletin of the Polish Academy of Sciences. Mathematics},
    VOLUME = {37},
      YEAR = {1989},
    NUMBER = {1-6},
     PAGES = {255--267},

}

\bib{Po}{book}{
   author={Popescu, N.},
   title={Abelian categories with applications to rings and modules},
   series={London Mathematical Society Monographs, No. 3},
   publisher={Academic Press, London-New York},
   date={1973},
}

\bib{Pr}{book}{
AUTHOR = {Prest, M.},
     TITLE = {Model theory and modules},
    SERIES = {London Mathematical Society Lecture Note Series},
    VOLUME = {130},
 PUBLISHER = {Cambridge University Press, Cambridge},
      YEAR = {1988},
     PAGES = {xviii+380},

}

\bib{Pr2}{book}{
   author={Prest, M.},
   title={Purity, spectra and localisation},
   series={Encyclopedia of Mathematics and its Applications},
   volume={121},
   publisher={Cambridge University Press, Cambridge},
   date={2009},
}

\bib{Pr3}{article}{
   AUTHOR = {Prest, M.},
     TITLE = {Superdecomposable pure-injective modules},
 BOOKTITLE = {Advances in representation theory of algebras},
    SERIES = {EMS Ser. Congr. Rep.},
     PAGES = {263--296},
 PUBLISHER = {Eur. Math. Soc., Z\"urich},
      YEAR = {2013},

}

\bib{Rie}{article}{
AUTHOR = {Riedtmann, C.},
     TITLE = {Representation-finite self-injective algebras of class
              {$A\sb{n}$}},
 BOOKTITLE = {Representation theory, {II} ({P}roc. {S}econd {I}nternat.
              {C}onf., {C}arleton {U}niv., {O}ttawa, {O}nt., 1979)},
    SERIES = {Lecture Notes in Math.},
    VOLUME = {832},
     PAGES = {449--520},
 PUBLISHER = {Springer, Berlin},
      YEAR = {1980},
 
}

\bib{Sch2}{article}{
AUTHOR = {Schr\"oer, J.},
     TITLE = {On the {K}rull-{G}abriel dimension of an algebra},
   JOURNAL = {Math. Z.},
  FJOURNAL = {Mathematische Zeitschrift},
    VOLUME = {233},
      YEAR = {2000},
    NUMBER = {2},
     PAGES = {287--303},

}



\bib{SkBC}{article}{
   author={Skowro\'{n}ski, A.},
   title={Algebras of polynomial growth},
   conference={
      title={Topics in algebra, Part 1},
      address={Warsaw},
      date={1988},
   },
   book={
      series={Banach Center Publ.},
      volume={26},
   },
   date={1990},
   pages={535--568},
}

\bib{Sk5}{article}{
 author={Skowro\'{n}ski, A.},
TITLE = {The {K}rull-{G}abriel dimension of cycle-finite {A}rtin
              algebras},
   JOURNAL = {Algebr. Represent. Theory},
  FJOURNAL = {Algebras and Representation Theory},
    VOLUME = {19},
      YEAR = {2016},
    NUMBER = {1},
     PAGES = {215--233},

}

\bib{Sk0}{article}{
AUTHOR = {Skowro\'nski, A.},
     TITLE = {Selfinjective algebras of polynomial growth},
   JOURNAL = {Math. Ann.},
  FJOURNAL = {Mathematische Annalen},
    VOLUME = {285},
      YEAR = {1989},
    NUMBER = {2},
     PAGES = {177--199},

}

\bib{Sk4}{article}{
   author={Skowro\'{n}ski, A.},
   title={Selfinjective algebras: finite and tame type},
   conference={
      title={Trends in representation theory of algebras and related topics},
   },
   book={
      series={Contemp. Math.},
      volume={406},
      publisher={Amer. Math. Soc., Providence, RI},
   },
   date={2006},
   pages={169--238},
}

\bib{Sk2}{article}{
AUTHOR = {Skowro\'nski, A.},
     TITLE = {Simply connected algebras and {H}ochschild cohomologies},
 BOOKTITLE = {Representations of algebras ({O}ttawa, {ON}, 1992)},
    SERIES = {CMS Conf. Proc.},
    VOLUME = {14},
     PAGES = {431--447},
 PUBLISHER = {Amer. Math. Soc., Providence, RI},
      YEAR = {1993},
}

\bib{Sk1}{article}{

AUTHOR = {Skowro\'nski, A.},
     TITLE = {Simply connected algebras of polynomial growth},
   JOURNAL = {Compositio Math.},
  FJOURNAL = {Compositio Mathematica},
    VOLUME = {109},
      YEAR = {1997},
    NUMBER = {1},
     PAGES = {99--133},

}

\bib{Sk3}{article}{
AUTHOR = {Skowro\'nski, A.},
     TITLE = {Tame algebras with strongly simply connected {G}alois
              coverings},
   JOURNAL = {Colloq. Math.},
  FJOURNAL = {Colloquium Mathematicum},
    VOLUME = {72},
      YEAR = {1997},
    NUMBER = {2},
     PAGES = {335--351},
}



\bib{Un}{article}{
AUTHOR = {Unger, L.},
     TITLE = {The concealed algebras of the minimal wild, hereditary
              algebras},
   JOURNAL = {Bayreuth. Math. Schr.},
  FJOURNAL = {Bayreuther Mathematische Schriften},
    NUMBER = {31},
      YEAR = {1990},
     PAGES = {145--154},
}



\bib{Zi}{article}{
   author={Ziegler, M.},
   title={Model theory of modules},
   journal={Ann. Pure Appl. Logic},
   volume={26},
   date={1984},
   number={2},
   pages={149--213},
}
	
\end{biblist}
\end{document}